\documentclass[onefignum,onetabnum]{siamonline250211}

\usepackage{lipsum}
\usepackage{amsfonts}
\usepackage{graphicx}
\usepackage{epstopdf}
\usepackage{algorithm, algpseudocode}
\usepackage{amsmath}
\usepackage{mathtools}
\usepackage{amsfonts, setspace}
\usepackage{amssymb}
\usepackage{graphicx, float}
\counterwithin{figure}{section}
\usepackage{caption}
\usepackage{subcaption}
\usepackage{bbm}
\usepackage{cleveref}
\usepackage{soul}

\DeclareMathOperator{\idty}{Id}
\ifpdf
  \DeclareGraphicsExtensions{.eps,.pdf,.png,.jpg}
\else
  \DeclareGraphicsExtensions{.eps}
\fi

\usepackage{tikz}
\usetikzlibrary {graphs.standard}
\usetikzlibrary{positioning,arrows.meta}

\usepackage{xcolor}

\graphicspath{{figures/}}

\usepackage{enumitem}
\setlist[enumerate]{leftmargin=.5in}
\setlist[itemize]{leftmargin=.5in}

\newsiamremark{remark}{Remark}
\newsiamremark{hypothesis}{Hypothesis}
\crefname{hypothesis}{Hypothesis}{Hypotheses}
\newsiamthm{claim}{Claim}
\newsiamremark{fact}{Fact}
\crefname{fact}{Fact}{Facts}

\headers{Sequential states via neural dynamics}{M. V. Bolelli, L. Greco, D. Prandi}

\title{Modeling sequential cognitive states via population level cortical dynamics \thanks{Submitted to the editors DATE.
\funding{This work was supported by the French National Research Agency (ANR) under grants ANR-17-CE40-0005 (MindMadeClear) and ANR-20-CE48-0003 (RubinVase).}
All authors contributed equally to this work.}}

\author{M. Virginia Bolelli\thanks{L2S, Universit\'e Paris-Saclay, CentraleSup\'elec, Gif-sur-Yvette, France
(\email{maria-virginia.bolelli@centralesupelec.fr}, \email{luca.greco@centralesupelec.fr}).}
\and Luca Greco\footnotemark[2]
\and Dario Prandi\thanks{CNRS, L2S, Universit\'e Paris-Saclay, CentraleSup\'elec, Gif-sur-Yvette, France
(\email{dario.prandi@centralesupelec.fr}).}
}

\usepackage{amsopn}
\DeclareMathOperator{\diag}{diag}

\ifpdf
\hypersetup{
  pdftitle={main},
  pdfauthor={M.V. Bolelli, L. Greco, and D. Prandi}
}
\fi

\begin{document}

\maketitle

% REQUIRED
\begin{abstract}
    %SHORTER (?)
    In this work, we present a mathematical model for cyclic and sequential patterns of brain activity, combining heteroclinic dynamics with discrete neural-field models. We first show that spatial-discrete neural-field equations with biologically realistic equilibria cannot support heteroclinic cycles.
    On the other hand, heterocline dynamics often arise in Lotka–Volterra-type systems, but these equations do not directly correspond to neuronal processes. To address this, we use a version of the Universal Approximation Theorem to approximate any target dynamics by a neural network interpretable as a high-dimensional Amari-type neural-field system. When the target dynamics contains a heteroclinic cycle, the approximating vector field generates a periodic trajectory that closely follows the heteroclinic connection.
    As a case study, we consider the cognitive processes underlying focused-attention meditation. We show how the model reproduces sequential transitions among cognitive states and we conclude providing a neural interpretation of the approximating dynamics.

\end{abstract}

% REQUIRED
\begin{keywords}
    Heteroclinic dynamics, Mean-field neural models, Sequential neural activity, Universal Approximation Theorem
\end{keywords}

% REQUIRED
\begin{MSCcodes}
    37C29,37N25,92C20,68T07,37N35
\end{MSCcodes}
% should be:
% 37C29 — Homoclinic and heteroclinic connections 
% 37N25 — Dynamical systems in biology 
% 92C20 — Neural biology (mathematical models) 
% 68T07 — Artificial neural networks
% 37N35 — Applications of dynamical systems to neuroscience 

\section{Introduction}

Experimental evidence indicates that neural dynamics often exhibit metastable states: transient, weakly stable configurations in which the system remains for a finite time before transitioning to another state \cite{ alderson2020metastable,capouskova2022modes,rabinovich2006dynamical, van2025large}. Rather than isolated events, these states appear as ordered sequences that can be tracked across different tasks, measurement modalities, and levels of analysis, suggesting a common dynamical motif. Sequential metastable patterns have been reported across several cognitive and perceptual processes, including motor memory consolidation \cite{nicolas2025unraveling}, binocular rivalry \cite{Maier2009}, and focused attention meditation \cite{hasenkamp2012effects}. The recurrence of such patterns across domains indicates that sequential switching is not task-specific but reflects a broader organizing principle of neural activity \cite{rabinovichInformationFlowDynamics2012}.

A natural mathematical framework for such behavior is given by dynamical systems presenting stable heteroclinic sequences. In such systems, trajectories evolve near a sequence of saddle equilibria connected by orbits along their unstable manifolds and give rise to ordered yet flexible transitions \cite{krupa1995asymptotic, krupa1997robust,rabinovich2001dynamical,afraimovich2004origin,afraimovich2004heteroclinic, castro2025robust}. Indeed, the global geometry of the stable and unstable manifolds fixes the order in which the states are visited, while the local instability of the saddle equilibria makes the dynamics sensitive to perturbations and thus modulable \cite{rabinovich2006dynamical,rabinovich2008transient,stone1990random,jeong2023effect}. Classical models exhibiting such behavior include Lotka–Volterra systems \cite{rabinovich2008transient,krupa1995asymptotic} and graph-based dynamical constructions \cite{ashwin2013designing}. However, while phenomenologically effective, these models are generally not directly interpretable in terms of neural mechanisms.

A biologically interpretable modelling framework from neural activity is provided by the mean-field equations put forward by Wilson–Cowan \cite{wilson1972excitatory, wilson1973mathematical} and Amari \cite{amari72}. These equations describe the collective activity of interacting neural populations and have been applied to a wide range of perceptual and cognitive phenomena \cite{coombesNeuralFieldsTheory2014,ermentroutMathematical1979,bressloffGeometric2001,tamekueMathematical2024,tamekueReproducibility2025, bolelli2025individuation, bolelli2025neural}. Heteroclinic structures in this context have been studied in neural-field models with specific interaction kernels \cite{schwappach2015metastable} and in discrete excitatory–inhibitory Wilson–Cowan systems \cite{huerta2004reproducible,rabinovich2008transient,nechyporenko2025switching}. In the latter case, existing results either identify heteroclinic cycles involving only a restricted subset of excitatory states \cite{huerta2004reproducible,rabinovich2008transient}, or focus on bifurcation mechanisms in low-dimensional excitatory–inhibitory systems \cite{nechyporenko2025switching}.

\subsection{The model}
In this work, we propose a mathematical model for cyclic and sequential patterns of brain activity that combines heteroclinic dynamics with mean-field neural models. We focus on a single-equation, Amari-type discrete neural-field formulation describing $n\ge 3$ interacting neural states:
\begin{equation}
    \label{eq:wc-intro}
    \dot x = -x +  \sigma(W x + b).
\end{equation}
Here, $x=(x_1,\ldots,x_n)\in \mathbb{R}^n$ is a vector collecting the activity of $n$ neural populations, $W$ is an $n\times n$ synaptic connectivity matrix, $\sigma$ is a nonlinear activation function, and $b$ is a vector encoding biases and external inputs.
With respect to excitatory–inhibitory Wilson–Cowan models, the fact of assigning one variable to each considered neural state allows the dimensionality of the system to remain tractable while preserving biological interpretability.
Within this setting, we investigate whether system~\eqref{eq:wc-intro} can generate cyclic, transient sequences of metastable states. In doing so, we aim to integrate heteroclinic mechanisms into biologically plausible neural-based models.

As a first step, we investigate whether discrete neural-field equations can support heteroclinic connections between saddle equilibria located on the coordinate axes. Each equilibrium represents the activation of a single state while all others remain silent, providing a natural candidate mechanism for sequential dynamics.
Our main result is Theorem~\ref{thm:main} showing that, under mild assumptions, system~\eqref{eq:wc-intro} cannot generate heteroclinic cycles among these axial equilibria. We observe that this rigorous formalisation calls for a reassessment of previous qualitative claims  made in \cite{rabinovich2008transient}.

\subsection{Aggregated dynamics and universal approximation}

After showing that biologically plausible neural-field models of the form~\eqref{eq:wc-intro} do not directly support heteroclinic cycles, we propose an alternative approach.
The key idea is to consider aggregated macro-variables that encode the behavior of a neural population as a whole.
This aggregation procedure is motivated by the fact that neural activity is often organized in terms of large-scale networks, whose collective dynamics can be captured by low-dimensional projections \cite{cunninghamDimensionality2014,stringerHighdimensional2019}. By considering a high-dimensional neural-field system and projecting its dynamics onto a lower-dimensional space, we aim to reproduce complex sequential patterns that are not directly supported at the micro-level.
Namely, it is for the macro-variables that we seek to reproduce heteroclinic dynamics, while the underlying micro-variables evolve in a high-dimensional space according to a neural-field system of the form~\eqref{eq:wc-intro}, and thus do not exhibit any heteroclynic cycle.

Mathematically, this approach relies on the universal approximation theorem for neural networks \cite{cybenko1989approximation,petersen2024mathematical}. More precisely, exploiting this theory we show that any smooth vector field on a compact domain in $\mathbb{R}^n$ can be uniformly approximated by the dynamics of $N \gg n$ interacting neural populations whose dynamics reads as~\eqref{eq:wc-intro}. This allows us to show that if the target system contains a heteroclinic orbit, the projection generates a cyclic, transient pattern: the original heteroclinic cycle present in the blueprint dynamics is replaced by a periodic orbit that closely follows its geometry. In particular, this shows that the resulting aggregated dynamics do not suffer of divergent residence times near saddle points, which are always present in unperturbed heteroclinic dynamics. The period of the resulting trajectory is then estimated in terms of \emph{first} return time of the heteroclinic connection.

\subsection{Application to focused attention meditation}

\begin{figure}
    \begin{subfigure}[b]{0.45\textwidth}
        \includegraphics[width = \textwidth]{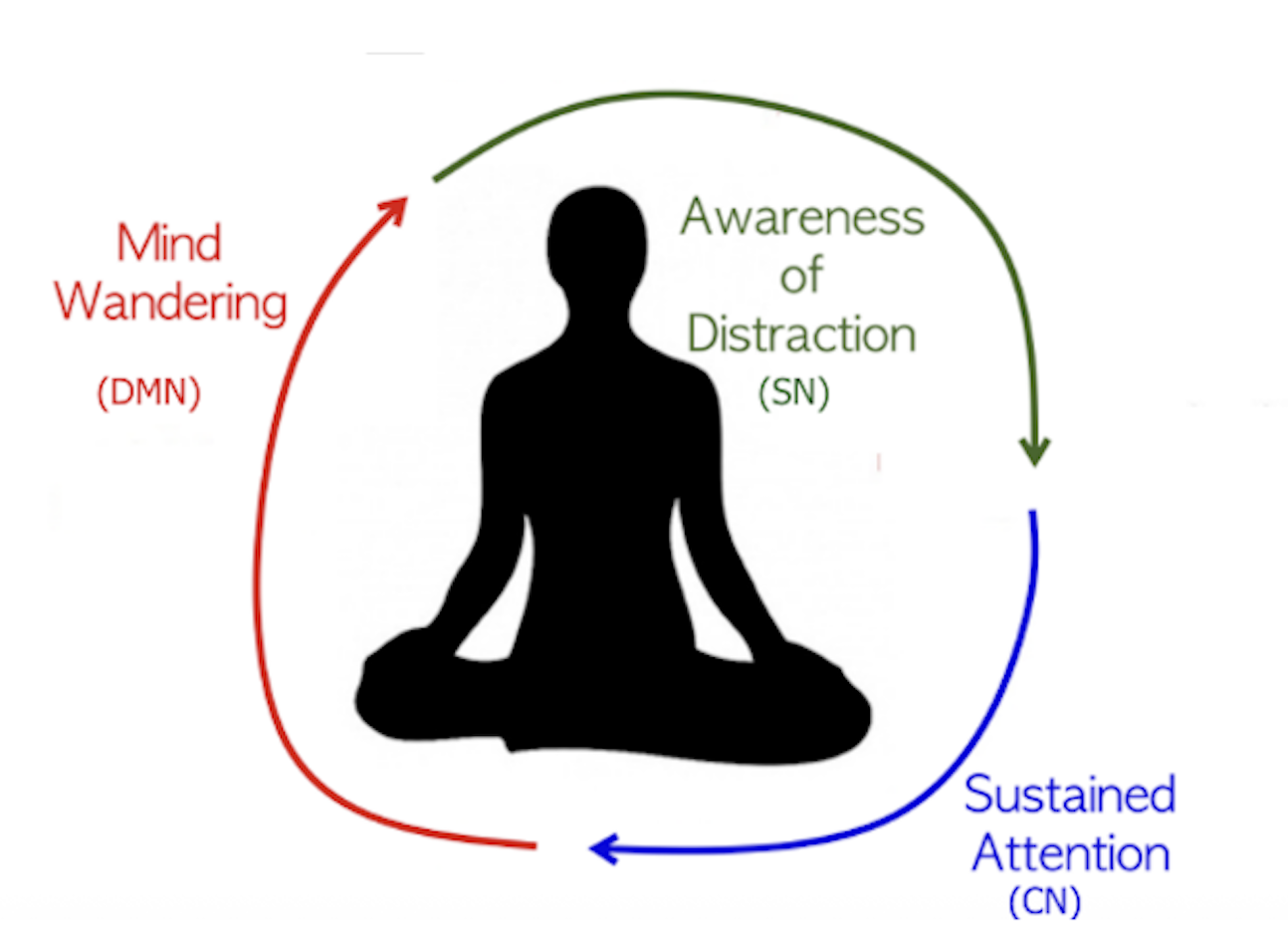}
        \caption{}
    \end{subfigure}
    \hspace{1cm}
    \begin{subfigure}[b]{0.45\textwidth}
        \centering
        \includegraphics[width = \textwidth]{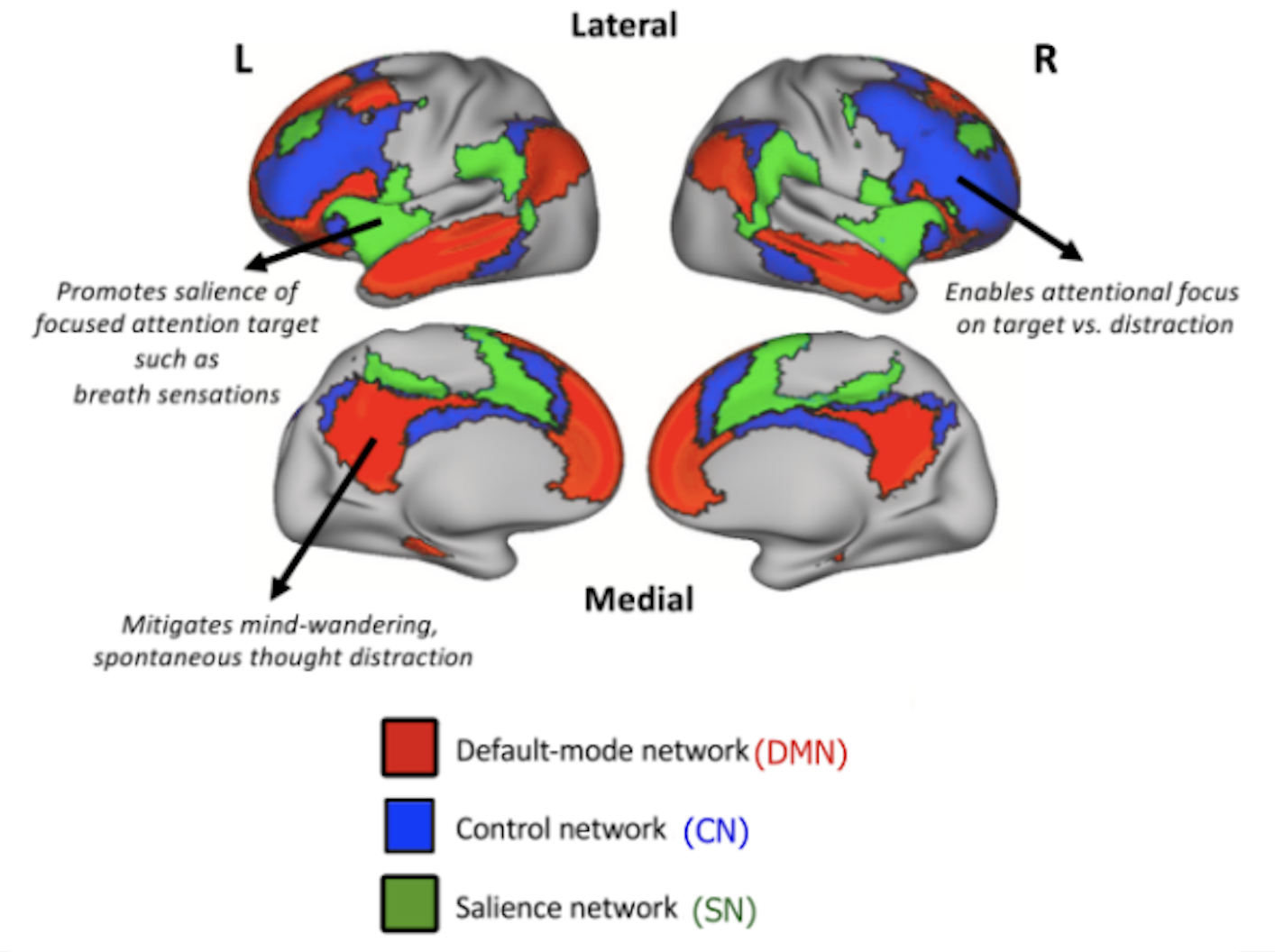}
        \caption{ }
    \end{subfigure}
    \caption{(a) Schematic representation of the cognitive phases involved in FAM: sustained attention, mind wandering, and awareness of distraction.
        (b) Corresponding large-scale brain networks typically associated with each phase: the Control Network, the Default Mode Network, and the Salience Network. Figure adapted from \cite{ganesan2022focused}.}

    \label{fig:meditation}
\end{figure}

To illustrate our model, we consider the cognitive processes underlying focused attention meditation (FAM). In this practice, the practitioner deliberately directs attention toward a specific object, such as the breath or a visual stimulus, while resisting internal and external distractions. Building on \cite{hasenkamp2013using,hasenkamp2012effects, hasenkamp2012mind}, we adopt a simple three-phase cognitive model in which attention cycles through distinct functional states, each corresponding to the activation of a specific large-scale brain network: sustained attention (associated with the Control Network), mind wandering (associated with the Default Mode Network), and awareness of distraction (associated with the Salience Network). See Figure~\ref{fig:meditation} for a schematic representation of the cognitive phases and their corresponding brain networks.

We formalize this in a three-dimensional state space. The target dynamics is defined using a Lotka-Volterra vector field, with parameters synchronized following \cite{horchler2015designing}, to reproduce the observed sequential switching. We then train the proposed neural network to approximate this dynamics. The resulting trajectories display cyclic, sequential patterns, converging to a periodic orbit, in a consistent way with our theoretical results. This captures the sequentiality of the cognitive processes. Thanks to the neural network structure, this dynamics can be interpreted in term of a high-dimensional discrete neural-field population model. By analyzing the resulting connectivity matrix, we observe that decreasing the residence time near a saddle point leads to smaller average connectivity in the corresponding network region, allowing us to investigate  how sequential transitions affect population-level interactions.

More generally, our model may provide insight into how sequential transitions shape population-level interactions and can be applied to, and validated across, a range of cognitive and neural phenomena beyond focused attention meditation.

\subsection{Structure of the paper}
The paper is organized as follows. Section~\ref{sec:preliminaries} reviews preliminary results, including heteroclinic dynamics and spatial-discrete neural-field equations. Section~\ref{sec:absence} shows the absence of heteroclinic channels in parsimonious neural equations with equilibria on the coordinate axes, including generalizations to perturbed equilibria. Section~\ref{sec:UATprocedure} presents our method for neurally interpretable cyclic and sequential dynamics: we derive a corollary of the universal approximation theorem, provide a biologically plausible interpretation via high-dimensional neural system, and demonstrate that the resulting dynamics is periodic, reproducing cyclic and sequential motion while avoiding infinite residence times. Section~\ref{sec:FAM} illustrates the model applied to focused attention meditation, detailing the underlying cognitive process, the chosen target dynamics, the network training, and numerical results.

\section{Preliminaries}
\label{sec:preliminaries}
We briefly review the theoretical tools that form the basis of our model.

\subsection{Sequential dynamics and heteroclinic cycles}
\label{sec:heterocline}
Heteroclinic connections in dynamical systems provide a rigorous framework for describing sequential transitions between metastable states \cite{afraimovich2004heteroclinic, rabinovich2008transient, rabinovich2006dynamical, laurent2001odor, rabinovich2001dynamical, afraimovich2004origin}. In such systems, trajectories evolve through a chain of saddle equilibria, producing transient activations and sequential switching between these states.

Formally, let us consider a smooth dynamical system
\begin{equation*}
    \dot{x} = g(x), \qquad x \in \mathbb{R}^n
\end{equation*}
which is assumed to admit $Q \geq 2$ hyperbolic saddle equilibria, denoted by ${\bar{x}_1, \dots, \bar{x}_Q}$.
More precisely, each equilibrium $\bar{x}_i$ is a codimension-one saddle, i.e., it has a single unstable direction, corresponding to a one-dimensional unstable manifold $\mathcal{W}^u(\bar{x}_i)$, and $n-1$ stable directions, forming an $(n-1)$-dimensional stable manifold $\mathcal{W}^s(\bar{x}_i)$. In terms of the Jacobian eigenvalues at $\bar{x}_i$, we have:
\begin{equation}
    \label{eq:eigenv}
    \lambda^{(i)}_1 > 0 > \Re(\lambda^{(i)}_2) \ge \dots \ge \Re(\lambda^{(i)}_n),
\end{equation}
where $\lambda_j^{(i)}$ denote the eigenvalues of $Dg(\bar x_i)$.

We further assume that each saddle is connected to the next by a trajectory $$\Gamma_i^+ \subseteq \mathcal{W}^u(\bar{x}_i)\cap \mathcal{W}^s(\bar{x}_{i+1}),$$ linking $\bar{x}_i$ to $\bar{x}_{i+1}$.
The set
\begin{equation*}
    \Gamma := \bigcup_{i=1}^Q \{\bar{x}_i\} \cup \bigcup_{i=1}^Q \Gamma_i^+
\end{equation*}
is called \emph{heteroclinic cycle}; a schematic example is provided in Figure~\ref{fig:heteroclinic-cycle}.

\begin{figure}[tb]
    \centering
    \begin{tikzpicture}[-latex, auto, node distance=1cm and 2cm, on grid, semithick,
            state/.style={circle, top color=white, bottom color=gray, draw, black, text=black, minimum width=1cm}]

        % Nodes
        \node[state] (D) {$\bar{x}_Q$};
        \node[state] (A) [above left=of D] {$\bar{x}_1$};
        \node[state] (B) [above right=of A] {$\bar{x}_2$};
        \node[state] (C) [above right=of D] {$\bar{x}_i$};

        % Paths
        \path (C) edge[bend left=15, dash pattern= on 9pt off 2pt   on 1pt off 2pt on 1pt off 2pt on 1pt off 2pt on 1pt off 2pt on 9pt off 2pt] node[below=0.15cm] {$  $} (D);
        \path (A) edge[bend left=15] node[above] {$\Gamma_1^+$} (B);
        \path (B) edge[bend right=-15, dash pattern= on 9pt off 2pt   on 1pt off 2pt on 1pt off 2pt on 1pt off 2pt on 1pt off 2pt on 9pt off 2pt ] node[below=0.15cm] {$ $} (C);
        \path (D) edge[bend right=-15] node[below=0.15cm] {$\Gamma_Q^+$} (A);

    \end{tikzpicture}

    \caption{Sequential dynamics within a heteroclinic cycle, $\Gamma$, composed of saddle equilibria points $\bar{x}_i$ (for $i = 1, \ldots, Q$) and their connecting trajectories $\Gamma_i^+$. }
    \label{fig:heteroclinic-cycle}
\end{figure}

The stability of the cycle $\Gamma$ depends on the ratio of the unstable eigenvalue and the stable eigenvalue whose real part is closest to zero. Specifically, for each saddle $\bar{x}_i$, we define
$\lambda_u^i := \lambda^{(i)}_1 $ and $
    \lambda_s^i := -\Re(\lambda^{(i)}_2)$, where $\lambda_s^i$ corresponds to the stable eigenvalue with the smallest absolute real part. The \emph{saddle value} at $\bar{x}_i$ is then given by
\begin{equation*}
    \nu_i := \frac{\lambda_s^i}{\lambda_u^i}.
\end{equation*}
Under some additional structural assumptions, we then have the following criterion for the stability of the heteroclinic cycle $\Gamma$.
Recall that an heteroclinic cycle is \emph{asymptotically stable} if for any neighborhood $U$ of $\Gamma$ there exists a smaller neighborhood $V \subset U$ such that for any initial condition $x_0 \in V$, the trajectory $t \mapsto x(t)$ converges to $\Gamma$ as $t \to +\infty$.

\begin{theorem}[Stability criterion \cite{krupa1995asymptotic}]
\label{thm:heteroclinic-stability}
    Assume that for any $i=1,\ldots, Q$ the following conditions hold:
    \begin{itemize}
        \item the unstable manifold $\mathcal{W}^u(\bar{x}_i)$ is contained in the stable manifold of  $\bar x_{i+1}$, i.e. 
        \[
        \mathcal{W}^u(\bar{x}_i) \setminus \{\bar x_i\}\subseteq  \mathcal{W}^s(\bar{x}_{j+1});
        \]
        \item there exists an invariant subspace $P_i$ such that $\mathcal{W}^u(\bar x_i) \subseteq P_i$ and $\bar{x}_{i+1}$ is a sink on $P_i$. 
    \end{itemize}
    Then, the heteroclinic cycle $\Gamma$ is asymptotically stable if
    \begin{equation}
        \nu(\Gamma) := \prod_{i=1}^Q \nu_i > 1.
    \end{equation}
\end{theorem}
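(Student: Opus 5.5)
The proof follows the classical Poincaré return map argument of Krupa--Melbourne. Around each saddle $\bar x_i$ we fix small cross-sections: an incoming section $H_i^{\mathrm{in}}$ transverse to $\Gamma_{i-1}^+$ near $\bar x_i$, and an outgoing section $H_i^{\mathrm{out}}$ transverse to $\Gamma_i^+$ near $\bar x_i$. The return map to $H_1^{\mathrm{in}}$ is then written as a composition of local maps $\phi_i:H_i^{\mathrm{in}}\to H_i^{\mathrm{out}}$ and global maps $\psi_i:H_i^{\mathrm{out}}\to H_{i+1}^{\mathrm{in}}$. Near each saddle we work in local coordinates in which the flow is approximated by its linearization. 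The relevant quantity is the distance to $\Gamma$, measured on $H_i^{\mathrm{in}}$ by the component transverse to the invariant subspace $P_{i-1}$.

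\textbf{Step 1: local estimate.} A point entering $H_i^{\mathrm{in}}$ at unstable coordinate $u$ takes time $T\approx -\frac{1}{\lambda_u^i}\log u$ to reach $H_i^{\mathrm{out}}$. During this time its stable coordinates are contracted by roughly $e^{-\lambda_s^i T}=u^{\nu_i}$. Consequently the distance $\delta_{\mathrm{out}}$ from $\Gamma_i^+$ on exit satisfies $\delta_{\mathrm{out}}\le C\,\delta_{\mathrm{in}}^{\nu_i-\epsilon}$, where $\epsilon>0$ is arbitrarily small. The $\epsilon$ accounts for non-resonance issues, and one can avoid a smooth linearization by using Hartman--Grobman together with Hölder/$C^1$ estimates.

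\textbf{Step 2: global estimate.} The global maps $\psi_i$ are diffeomorphisms between sections along compact pieces of orbit. Because $\mathcal W^u(\bar x_i)\setminus\{\bar x_i\}\subseteq\mathcal W^s(\bar x_{i+1})$ and $\mathcal W^u(\bar x_i)\subseteq P_i$, each $\psi_i$ is Lipschitz with constant $K$. The invariance of $P_i$ makes the transverse coordinate map to a quantity of the same order, and since $\bar x_{i+1}$ is a sink in $P_i$ there is no escape within $P_i$.

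\textbf{Step 3: composition.} Composing around the cycle gives $\delta_{k+1}\le C'\delta_k^{\prod_i(\nu_i-\epsilon)}$. Choose $\epsilon$ small enough that the exponent $\rho$ satisfies $\rho>1$, which is possible because $\nu(\Gamma)>1$. Then $\delta_k\to0$ super-linearly once $\delta_0$ is small. Moreover the sequence is monotone, so a small enough neighborhood $V$ keeps orbits inside any prescribed $U$; this gives stability together with convergence to $\Gamma$. Finally, we check that the portions of orbit between sections stay close to $\Gamma$, by continuous dependence along the compact connecting pieces and by the local linear flow near the saddles.

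\textbf{Main obstacle.} The hardest step is making the local estimate rigorous with several stable directions and possibly complex eigenvalues. There the weakest contraction $\lambda_s^i$ must control all transverse components, and one must check that the unstable coordinate of the next saddle is governed by exactly those components. This is where the hypotheses on $P_i$ are used. I would first try to handle this by bounding all stable directions by the slowest rate, at the cost of an $\epsilon$ loss.
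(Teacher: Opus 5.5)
The paper gives no proof of this statement; it is imported from \cite{krupa1995asymptotic}. Your local/global return-map scheme is the one used there, but with the cruder exponent $\nu_i=\lambda_s^i/\lambda_u^i$ in place of Krupa--Melbourne's $\min\{c_i,\lambda_u^i+|t_i|\}/\lambda_u^i$, where $-c_i$ and $t_i$ are the relevant contracting and transverse eigenvalues. Since $\nu_i$ is never larger, $\nu(\Gamma)>1$ is enough for Step~3.

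The genuine gap is that your return map is not defined on a neighborhood of $\Gamma\cap H_1^{\mathrm{in}}$. Because $\dim\mathcal W^u(\bar x_i)=1$, the local stable manifold $\mathcal W^s_{\mathrm{loc}}(\bar x_i)$ has codimension one. It splits $H_i^{\mathrm{in}}$ into a half with $u>0$ and a half with $u<0$, and Step~1 (with $T\approx-\frac{1}{\lambda_u^i}\log u$) silently treats only the first half. Points with $u<0$ never reach $H_i^{\mathrm{out}}$. They follow the other branch $\Gamma_i^-$ of $\mathcal W^u(\bar x_i)$, which leaves $\bar x_i$ tangent to $-e_i^u$ and therefore contains points at a fixed positive distance from $\Gamma$. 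Every neighborhood $V$ of $\Gamma$ contains such points, already points of $\Gamma_i^-$ close to $\bar x_i$. So the claim at the end of Step~3, that a small $V$ keeps orbits inside any prescribed $U$, fails. In fact, whenever $\Gamma$ contains only one branch of each $\mathcal W^u(\bar x_i)$, as in the paper's definition, $\Gamma$ is never Lyapunov stable in all of $\mathbb{R}^n$, whatever the saddle values. Read literally, the first hypothesis even makes $\Gamma_i^-$ a second connection to $\bar x_{i+1}$ that is not part of $\Gamma$.

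Closing the argument needs an ingredient that appears in neither your proposal nor the transcribed statement. One option is a symmetry exchanging the two sides of $\mathcal W^s_{\mathrm{loc}}(\bar x_i)$. This is the setting of Krupa--Melbourne, where the other branch is a symmetric copy of a connection and stability refers to the cycle together with its symmetric images. A second option is a forward-invariant region lying on the correct side of every $\mathcal W^s_{\mathrm{loc}}(\bar x_i)$, such as the closed positive orthant for the Lotka--Volterra field of Section~\ref{sec:FAM}; stability is then relative to that region. A third option, under the literal first hypothesis, is to enlarge $\Gamma$ to $\bigcup_i(\{\bar x_i\}\cup\mathcal W^u(\bar x_i))$ and run your estimates with two incoming and two outgoing sections per saddle.

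Two smaller points. First, Hartman--Grobman only gives a homeomorphism, which does not preserve the exponents you need. Instead, straighten $\mathcal W^s_{\mathrm{loc}}(\bar x_i)$ and $\mathcal W^u_{\mathrm{loc}}(\bar x_i)$ by a $C^1$ change of coordinates, so that $\dot u=u\,(\lambda_u^i+o(1))$ and $\dot s=(B_i+o(1))\,s$, with $B_i$ the stable block of $Dg(\bar x_i)$. An adapted norm then gives $|s(T)|\le|s(0)|\,e^{-(\lambda_s^i-\epsilon)T}$ together with $T\ge\log(h/u)/(\lambda_u^i+\epsilon)$, where $h$ is the size of the local box. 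This handles several stable directions and complex eigenvalues at once.

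Second, your ``main obstacle'' is therefore not where the hypotheses on $P_i$ enter. With full distances to $\Gamma$ and the exponent $\nu_i$, Steps~1--3 never use the invariant subspaces. In Krupa--Melbourne they provide robustness of the connections and the sharper exponents, obtained by tracking only the distance to $P_i$. They do not cure the two-sidedness problem either.
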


While more general frameworks for heteroclinic dynamics that do not rely on invariant subspaces of equal dimensions have been recently developed in \cite{castro2025robust}, we restrict here to the classical setting described above.

When Theorem~\ref{thm:heteroclinic-stability} applies, the phase space contains a positively invariant region, referred to as a \emph{heteroclinic channel}, consisting of trajectories that are attracted toward the cycle $\Gamma$,
as illustrated in Figure~\ref{fig:heterocline}.
Trajectories approaching $\Gamma$ linger near each equilibrium $\bar{x}_i$ for extended intervals of time before making a rapid transition along $\Gamma_i^+$ toward $\bar{x}_{i+1}$.

\begin{figure}[tb]
    \begin{subfigure}[b]{0.28\textwidth}
        \centering
        \includegraphics[width = 0.9\linewidth]{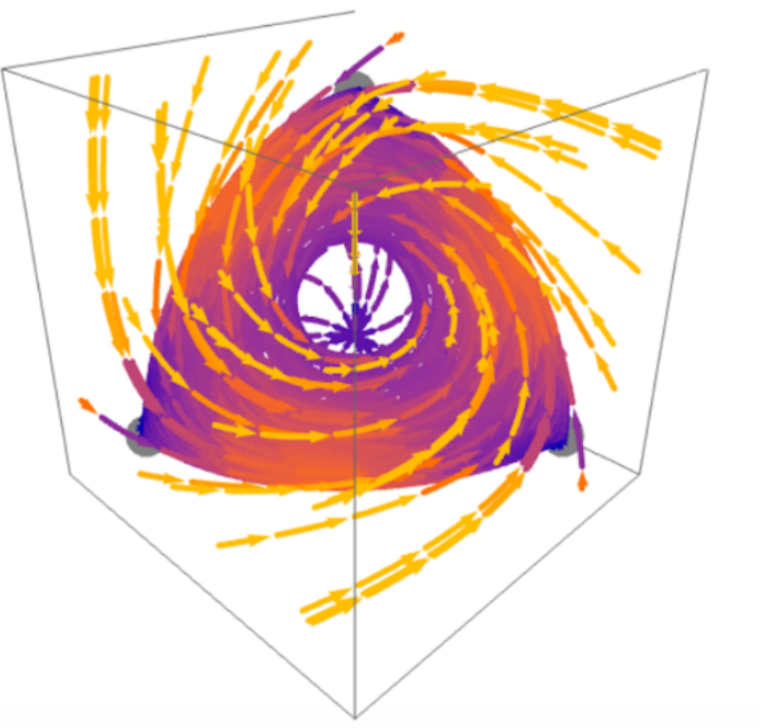}
        \caption{Vector field.}
    \end{subfigure}
    \begin{subfigure}[b]{0.33\textwidth}
        \centering
        \includegraphics[width = 0.8\linewidth]{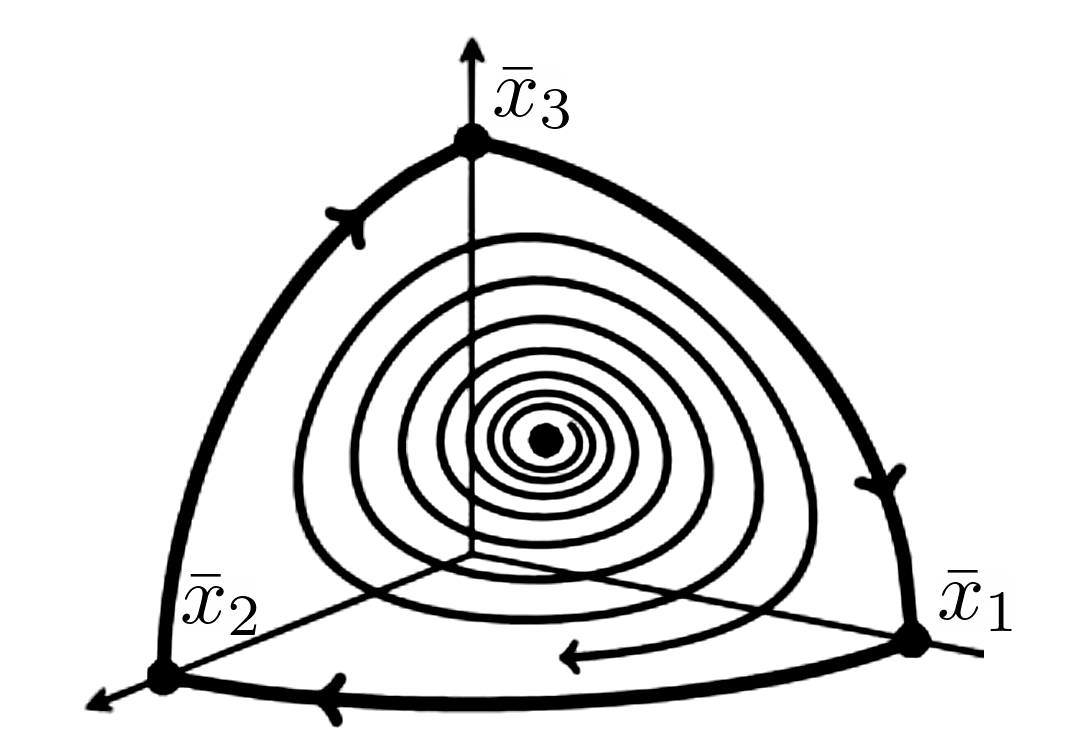}
        \caption{Heteroclinic cycle.}
    \end{subfigure}
    \hspace{0.5cm}
    \begin{subfigure}[b]{0.33\textwidth}
        \centering
        \includegraphics[width = \linewidth]{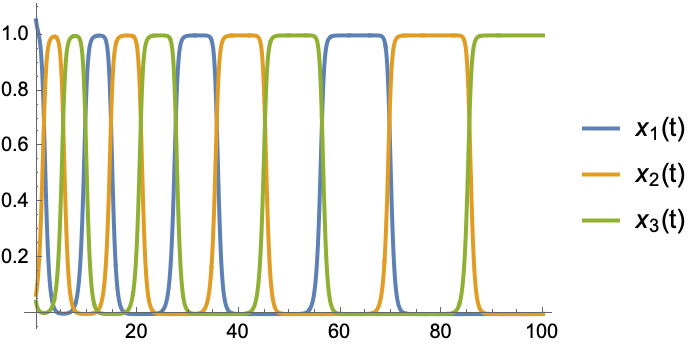}
        \caption{Trajectory evolution in time. }
    \end{subfigure}
    \caption{Example of heteroclinic dynamics in $\mathbb{R}^n$ for $n = Q = 3$.}
    \label{fig:heterocline}
\end{figure}

A noteworthy property of dynamics characterized by the presence of heteroclinic cycles is that the residence time\footnote{Here, the residence time $t_i$ at $\bar{x}_i$ denotes the time interval during which the trajectory remains inside a small neighborhood of the saddle equilibrium before escaping along its unstable manifold.} near each saddle increases without bound as a trajectory repeatedly approaches the cycle. Small perturbations, whether deterministic or stochastic, prevent this divergence, producing sustained oscillatory dynamics. In such perturbed systems, the average residence time near $\bar{x}_i$ scales approximately as $t_i =\mathcal{O}(1/\lambda_u^i)$. For further details, see \cite{stone1990random, jeong2023effect}.

\subsection{Neural-field equations}
\label{ssec:MFeq}
Mean-field equations, first introduced by Wilson and Cowan \cite{WC72} and independently by Amari \cite{amari72}, are widely used to describe the average behavior of interacting neuronal populations.

In this paper, we focus on the spatial-discrete case, and the corresponding equation reads
\begin{equation}
    \label{eq:WC}
    \dot x = f(x), \qquad f(x) = -x + \sigma(Wx + b), \qquad x \in \mathbb{R}^n,
\end{equation}
where each component $x_i$ represents the activity of a single population.

The matrix $W \in \mathbb{R}^{n \times n}$ encodes interactions among populations,
while $b \in \mathbb{R}^n$ represents external inputs and it is assumed to be positive, i.e. $b_i>0$.

The nonlinear activation function $\sigma : \mathbb{R} \to \mathbb{R}$ is applied componentwise and it is assumed to satisfy the following properties:
\begin{itemize}
    \item  $\sigma(\mathbb{R}) \subset (-1, 1)$ and $\sigma(0) = 0$;
    \item $\sigma$ is continuously differentiable, i.e. $\sigma \in C^1(\mathbb{R})$, and its derivative satisfies $0<\sigma'(s)\leq \sigma'(0)=1$ for all $s \in \mathbb{R}. $
\end{itemize}
We note that the assumptions $\sigma(0)=0$ and $\sigma'(0)=1$ are normalizations, which can be absorbed into the matrix $W$ and the bias $b$.
Two examples of nonlinearities satisfying these conditions are shown in Figure~\ref{fig:sigmoid}.

\begin{figure}[tb]
    \centering
    \begin{tikzpicture}[domain=-2:2, scale=1.0]
        \draw[->] (-2.2,0) -- (2.2,0) node[right] {$s$};
        \draw[<-] (0,1.2) -- (0,-1.2) node[below] {{\scriptsize $\sigma(s) = \tanh (s)$}};
        \draw[color=blue, thick]     plot (\x,{tanh (\x)});
    \end{tikzpicture}
    \hspace{1cm}
    \begin{tikzpicture}[domain=-2:2, scale=1.0]
        % Assi
        \draw[->] (-2.2,0) -- (2.2,0) node[right] {$s$};
        \draw[<-] (0,1.2) -- (0,-1.2) node[below] {{\scriptsize $\sigma(s) = \frac{2}{1 + e^{-s}} - 1$}};
        % Grafico sigmoide traslata
        \draw[color=blue, thick] plot (\x,{2/(1 + exp(-\x)) - 1});
    \end{tikzpicture}
    \caption{Two examples of nonlinearities $\sigma$ considered in this paper.}
    \label{fig:sigmoid}
\end{figure}

\section{Absence of heteroclinic channels in neural-field equations}
\label{sec:absence}
Motivated by modeling sequential neural activity across interacting populations, we study whether neural-field equations of the form~\eqref{eq:WC} can exhibit stable heteroclinic channels.

To investigate this, we consider the simplest form of sequential behavior: a system where $n$ populations become active one after another in a cyclic manner. This would correspond to a heteroclinic cycle connecting $n$ equilibria
$$\bar x_1, \ldots, \bar x_n \in \mathbb{R}^n.$$
Since each equilibrium $\bar x_i$ represents the activation of a single population while all others remain inactive, we assume that each equilibrium lies along one of the coordinate axes and can be written as
\begin{equation}
    \label{eq:equilibria}
    \bar x_i = a_i e_i
\end{equation}
for suitable constants $a_i>0$, where $e_i$ denotes the $i$-th canonical vector of $\mathbb{R}^n$.

\begin{remark}
    The equilibrium condition $\bar x_i = \sigma(W \bar x_i + b)$, together with the fact that  $\sigma(\mathbb{R})  \in (-1,1)$, implies that every positive equilibrium satisfies $ a_i < 1$.
\end{remark}

Imposing the equilibria a priori forces the connectivity matrix $W$ to have a specific structure, which can be characterized in terms of the coefficients $a_i$, $b_i$, and the activation function $\sigma$.
To see this, let $\bar X = \diag(a_1, \ldots, a_n)$ be the diagonal matrix whose columns correspond to the equilibria $\bar x_i = a_i e_i$.  The equilibrium condition can then be written compactly as
\begin{equation*}
    \bar X = \sigma(W \bar X + b \mathbf{1}^\top),
\end{equation*}
where $\mathbf{1} = (1, \ldots, 1)^\top$.
From this relation, we obtain the following representation for $W$
\begin{equation}
    \label{ref:W-structure}
    W = A - b u^\top, \quad  A = \sigma^{-1}(\bar X)\bar X^{-1} , ~ u^\top = \mathbf{1}^\top \bar X^{-1}.
\end{equation}

The theorem below is the main result of this section. Here, via a Lyapounov-like function, we show that when the equilibria lie on the coordinate axes, no heteroclinic connections can occur among them.

\begin{theorem}
    \label{thm:main}
    Consider the system~\eqref{eq:WC} with $n$ equilibrium points
    $\bar{x}_i$ of the form~\eqref{eq:equilibria}.
    Assume that the activation function $\sigma$ satisfies the assumptions of Section~\ref{ssec:MFeq}. Then, no heteroclinic cycle can connect the equilibria $\bar{x}_i$.
\end{theorem}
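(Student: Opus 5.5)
The plan is to exploit the special structure \eqref{ref:W-structure} forced on $W$ by the axial equilibria. The goal is to show that, after a positive diagonal rescaling, $W$ becomes symmetric, so that \eqref{eq:WC} is of Hopfield/Cohen--Grossberg type and admits a strict Lyapunov function $E$. A heteroclinic cycle $\bar x_1\to\bar x_2\to\dots\to\bar x_n\to\bar x_1$ would then force $E(\bar x_1)>E(\bar x_2)>\dots>E(\bar x_n)>E(\bar x_1)$, which is a contradiction.

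\emph{Step 1 (symmetrization).} From \eqref{ref:W-structure}, the entries of $W$ are
\[
W_{ij}=\alpha_i\delta_{ij}-\frac{b_i}{a_j},\qquad \alpha_i:=\frac{\sigma^{-1}(a_i)}{a_i}.
\]
Set $D=\diag(d_1,\dots,d_n)$ with $d_i:=1/(a_ib_i)>0$, which is legitimate since $a_i,b_i>0$. Then
\[
(DW)_{ij}=\frac{\alpha_i}{a_ib_i}\delta_{ij}-\frac{1}{a_ia_j},
\]
so $S:=DW$ is symmetric. This is the key structural observation.

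\emph{Step 2 (energy).} On the open cube $C=(-1,1)^n$, where $\sigma^{-1}$ is defined and $C^1$ with positive derivative, define
\[
E(x)=\sum_{i}d_i\int_0^{x_i}\sigma^{-1}(r)\,dr-\tfrac12 x^\top Sx-\sum_i d_ib_ix_i .
\]
By symmetry of $S$, $\partial_iE(x)=d_i\bigl(\sigma^{-1}(x_i)-(Wx+b)_i\bigr)$. Writing $y_i=\sigma((Wx+b)_i)$, along solutions we obtain
\[
\dot E=\sum_i d_i\bigl(\sigma^{-1}(x_i)-\sigma^{-1}(y_i)\bigr)(y_i-x_i)\le 0,
\]
because $\sigma^{-1}$ is strictly increasing. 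Equality holds iff $y=x$, that is, iff $x$ is an equilibrium. Hence $E$ is strictly decreasing along every non-stationary trajectory contained in $C$.

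\emph{Step 3 (trajectories stay in $C$).} This is the step needing care, because $E$ is only defined on $C$. Since $|\sigma|<1$, if $x_i\ge 1$ then $\dot x_i<0$, and if $x_i\le -1$ then $\dot x_i>0$. Hence $C$ is forward invariant, and no trajectory can leave $C$ in forward time. Now take a connecting orbit $\Gamma_i^+$, so that $x(t)\to\bar x_i$ as $t\to-\infty$ and $x(t)\to\bar x_{i+1}$ as $t\to+\infty$. Since $0<a_i<1$ by the Remark, $\bar x_i\in C$. Thus $x(t)\in C$ for all sufficiently negative $t$, and by forward invariance $x(t)\in C$ for all $t$.

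\emph{Step 4 (conclusion).} Along $\Gamma_i^+$ the function $t\mapsto E(x(t))$ is continuous and strictly decreasing, and it extends continuously to the limits. This gives $E(\bar x_i)>E(\bar x_{i+1})$. Chaining these inequalities around the cycle yields $E(\bar x_1)>E(\bar x_1)$, which is impossible. Note that this argument even rules out any chain of connections returning to its starting equilibrium, independently of the stability criterion of Theorem~\ref{thm:heteroclinic-stability}.

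I expect the main obstacle to be spotting the diagonal symmetrizer $D$ in Step 1. Once it is found, the rest is the classical Hopfield energy argument together with the invariance bookkeeping of Step 3.
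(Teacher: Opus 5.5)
Your proposal is correct and is essentially the paper's proof: rewritten in $x$-coordinates, the paper's Lyapunov function $V$ in \eqref{eq:V} equals your energy $E$ plus the constant $\tfrac12$. Your invariance and chaining steps also mirror the paper's, and your diagonal symmetrizer $D$ is a cleaner explanation of why that $V$ works.

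The one slip is that $\sigma^{-1}$ is defined only on $\sigma(\mathbb{R})=(-\alpha,\beta)$, which may be strictly smaller than $(-1,1)$. You should therefore work on $\sigma(\mathbb{R})^n$ instead of $(-1,1)^n$; this set is forward invariant by the same boundary argument and is exactly the paper's cube $Q$ after the rescaling $v_i=x_i/a_i$.
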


\begin{proof}
To simplify the analysis, we introduce the normalized coordinates
\begin{equation*}
    v_i = \frac{x_i}{a_i}, \qquad i = 1,\ldots,n
\end{equation*}
under which each equilibrium $\bar{x}_i = a_i e_i$ is mapped to the standard 
basis vector $e_i$. In these coordinates, system~\eqref{eq:WC} becomes
\begin{equation}
    \label{eq:WC-change-coord}
    \dot{v}_i = F_i(v), \qquad
    F_i(v) = -v_i + \frac{1}{a_i}\,\sigma\!\left(s_i v_i 
    + b_i u(v)\right), \qquad
    i = 1,\ldots,n.
\end{equation}
here,  we let 
\[
   u(v) := 1 - \sum_{k=1}^n v_k, 
   \qquad\text{and}\qquad
   s_i:=\sigma^{-1}(a_i).
\]

Let $\sigma(\mathbb{R})=(-\alpha,\beta)$ for some $\alpha, \beta \le 1$, and define $Q :=(-\alpha/a_1,\beta/a_1)\times \ldots \times (-\alpha/a_n,\beta/a_n)$ to be an hypercube containing all the equilibria $e_i$. 
We start by showing that $Q$ is invariant under the flow of~\eqref{eq:WC-change-coord}. To see this, let us consider the face of $Q$ defined by $v_i = \beta/a_i$. On this face, we have 
\[
    F_i(v) = -\frac{\beta}{a_i} + \frac{1}{a_i}\,\sigma\!\left(s_i \cdot \frac{\beta}{a_i} + b_i u(v)\right) < 0. 
\]
This implies that $F$ is inward pointing on this face, and thus trajectories cannot exit $Q$ through it. A similar argument applies to the face defined by $v_i = -\alpha/a_i$.

Arguing by contradiction, suppose that there exists a heteroclinic cycle $\Gamma$ connecting the equilibria $e_i$. Since $Q$ is invariant and contains all the equilibria, it follows that $\Gamma \subset Q$. Hence, it suffices to analyze the dynamics of~\eqref{eq:WC-change-coord} on $Q$. 
In particular, for every $v \in Q$ and $i = 1, \ldots, n$, we have $a_iv_i\in (-1,1)$, so we can write 
\[
  a_i F_i(v)
  = \sigma\!\left(s_i v_i + b_i u\right)
    - \sigma\!\left(\sigma^{-1}(a_i v_i)\right).
\]
Since $\sigma$ is $C^1$ and strictly increasing, the mean value
theorem yields a point $\theta_i(v)$ strictly between
$s_i v_i + b_i u$ and $\sigma^{-1}(a_i v_i)$ such that
\[
  a_i F_i(v)
  = \sigma'(\theta_i(v))
    \Bigl[\bigl(s_i v_i + b_i u\bigr) - \sigma^{-1}(a_i v_i)\Bigr]
  = b_i\,\sigma'(\theta_i(v))\,\bigl(u - q_i(v_i)\bigr),
\]
Setting
\[
 q_i(r) := \frac{\sigma^{-1}(a_i r) - s_i r}{b_i},
 \qquad\text{and}\qquad
  m_i(v) := \frac{b_i}{a_i}\,\sigma'(\theta_i(v)) > 0
\]
we obtain
\begin{equation}
  \label{eq:Fi-factored}
  F_i(v) = m_i(v)\,\bigl(u(v) - q_i(v_i)\bigr),
  \qquad i = 1,\ldots,n.
\end{equation}
Observe that $m_i(v)>0$ since $\sigma'(\cdot)>0$, and $a_i,b_i>0$ for every  $i = 1, \ldots, n$.
In particular, $v$ is an equilibrium of~\eqref{eq:WC-change-coord} if and only if $u(v) = q_i(v_i)$ for every $i$.

After this set up, let us present the main argument of the proof, which is based on the construction of a Lyapunov-like function. We consider the function $V: Q\to \mathbb{R}$ defined by
\begin{equation}
  \label{eq:V}
  V(v) := \frac{1}{2}\,u(v)^2 + \sum_{i=1}^n \int_0^{v_i} q_i(r)\,dr,
\end{equation}
Since $\partial_{v_i} u = -1$ for every $i$, we obtain
\[
  \partial_{v_i} V(v) = q_i(v_i) - u(v).
\]
Therefore, along every trajectory $v=v(t)\in Q$ of~\eqref{eq:WC-change-coord}, we have
\begin{equation}
\begin{aligned}
\label{eq:derivativeV}
  \frac{d}{dt}{V}(v)
  &= \sum_{i=1}^n \partial_{v_i} V(v)\cdot F_i(v) \\
  &= \sum_{i=1}^n \bigl(q_i(v_i)-u(v)\bigr)\,
     m_i(v)\,\bigl(u(v)-q_i(v_i)\bigr) \\
  &= -\sum_{i=1}^n m_i(v)\,\bigl(u(v))-q_i(v_i)\bigr)^2
  \;\leq\; 0.
\end{aligned}
\end{equation}
Since each $m_i(v)>0$, we have $\frac{d}{dt}{V}(v)=0$ if and only if $u(v)=q_i(v_i)$ for all $i$, which by~\eqref{eq:Fi-factored} is equivalent to $F(v)=0$, i.e.\ $v$ is an equilibrium.

We can now conclude. The cycle $\Gamma$ decomposes as
\[
    \Gamma = \Gamma_1 \cup \ldots \cup \Gamma_n,
\]
where $\Gamma_i$ is the connecting orbit from $\bar{e}_i$ to $\bar{e}_{i+1}$ (with indices mod~$n$). Since $\nabla V \neq 0$ along each $\Gamma_i$ (as the equilibria $\bar{e}_i$ are the only zeros of $\nabla V$), the inequality~\eqref{eq:derivativeV} ensures that the line integral\footnote{Recall that the line integral of a vector field $G:\mathbb{R}^n\to\mathbb{R}^n$ along a piecewise smooth curve $\gamma:[0,T]\to\mathbb{R}^n$ is defined as $\int_\gamma G d s = \int_0^T G(v(t))\cdot \dot v(t) d t$, where $v$ is a piecewise regular parametrization of the curve $\gamma$.} of $\nabla V$ along every $\Gamma_i$ is strictly negative. Therefore,
\begin{equation}
\label{eq:line-integral}
    0 = \oint_C \nabla V ds = \sum_{i=1}^{n} \int_{\Gamma_i} \nabla{V}\,ds < 0,
\end{equation}
where the left-hand side vanishes because $V$ returns to the same value after traversing the closed cycle $\Gamma$, and the right-hand side is strictly negative on each connecting arc. This leads to a contradiction, and therefore no heteroclinic cycle can exist for system~\eqref{eq:WC-change-coord}.
\end{proof}

When the dimension $n$ is at least $4$, under slightly more restrictive assumptions on the activation function $\sigma$,
one can deduce the above result from a purely local approach.
More precisely, in this case the absence of heteroclinic cycles follows from the following theorem.

    \begin{theorem}
        \label{thm:local}
        Consider the system~\eqref{eq:WC} with $n$ equilibrium points
        $\bar{x}_i$ of the form~\eqref{eq:equilibria}.
        Assume that the activation function $\sigma$ satisfies the assumptions of Section~\ref{ssec:MFeq}, and in addition that 
        \[
        \sigma'(s)<\frac{\sigma(s)}{s}<\frac{\sigma'(0)}{s} \qquad \text{for every } s\in\mathbb{R}\setminus\{0\}.
        \]
        Then,      
        \[
            \dim\mathcal W^u(\bar{x}_i) \ge n-2
            \qquad\text{and}\qquad
            \mathcal W^s(\bar{x}_i)\le 1 
            \qquad \text{for every } i=1,\dots,n.
        \]
    \end{theorem}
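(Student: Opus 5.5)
The plan is to compute the Jacobian at each axial equilibrium explicitly and locate its eigenvalues with a secular-equation (rank-one perturbation) argument. It is convenient to use the normalized coordinates $v_k=x_k/a_k$ of the proof of Theorem~\ref{thm:main}. This is a diagonal linear change of variables, so it does not affect the dimensions of the stable and unstable manifolds. In these coordinates the vector field is~\eqref{eq:WC-change-coord} and the equilibrium is $e_i$, where $u(e_i)=0$. The argument of $\sigma$ in the $k$-th component at $e_i$ is therefore $0$ for $k\neq i$ and $s_i=\sigma^{-1}(a_i)>0$ for $k=i$. Differentiating, the Jacobian takes the form
\[
J_i = D - c\,\mathbf{1}^\top,
\]
with $D=\diag(d_1,\dots,d_n)$ and $c=(c_1,\dots,c_n)^\top$ given by
\[
d_k = \frac{s_k}{a_k}-1,\quad c_k=\frac{b_k}{a_k}\quad (k\neq i),\qquad d_i=\frac{\sigma'(s_i)s_i}{a_i}-1,\quad c_i=\frac{\sigma'(s_i)b_i}{a_i}.
\]

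\textbf{Signs of the entries.} The extra hypothesis on $\sigma$ fixes the signs. For $s>0$, the inequality $\sigma(s)/s<\sigma'(0)/s=1/s$ together with the bound $\sigma(s)<s$ (which follows from $\sigma'\le 1$ and $\sigma(0)=0$) gives $a_k=\sigma(s_k)<s_k$. Hence $d_k>0$ for every $k\neq i$. The inequality $\sigma'(s_i)<\sigma(s_i)/s_i=a_i/s_i$ gives $d_i<0$. Finally $c_k>0$ for all $k$, because $b_k,a_k>0$ and $\sigma'>0$.

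\textbf{Eigenvalue count.} By the matrix determinant lemma,
\[
\det(\lambda-J_i)=\prod_k(\lambda-d_k)\,g(\lambda),\qquad g(\lambda)=1+\sum_k\frac{c_k}{\lambda-d_k}.
\]
Since all $c_k>0$, $g$ is strictly decreasing on each interval between consecutive distinct poles. It tends to $-\infty$ from the left of each pole and to $+\infty$ from the right, and $g(\pm\infty)=1$. This yields the following roots.
\begin{itemize}
\item One root lies in each gap between consecutive distinct poles.
\item One root lies to the left of the smallest pole $d_i$.
\item A pole of multiplicity $m$ is itself an eigenvalue of multiplicity $m-1$, since $\prod(\lambda-d_k)$ overcompensates the pole.
\end{itemize}
This accounts for all $n$ eigenvalues, and all of them are real. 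Ordering the poles as $d_i<d_{(1)}\le\dots\le d_{(n-1)}$, the eigenvalues are:
\begin{itemize}
\item one in $(-\infty,d_i)$, which is negative;
\item one in $(d_i,d_{(1)})$;
\item $n-2$ in $[d_{(1)},d_{(n-1)}]$, all of which are positive.
\end{itemize}
Hence $\dim\mathcal W^u(e_i)\ge n-2$.

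\textbf{Main obstacle.} The delicate point is the sign of the single eigenvalue $\mu\in(d_i,d_{(1)})$. The counting above only gives $\dim\mathcal W^s\le 2$; to get the sharper bound I would determine the sign of $\mu$ from $g(0)$. Because $g$ is decreasing on $(d_i,d_{(1)})$, we have $\mu>0$ exactly when $g(0)>0$, that is, when
\[
1>\sum_k \frac{c_k}{d_k}.
\]
Using the explicit values of $c_k$ and $d_k$, this amounts to comparing $\sum_{k\ne i} b_k/(s_k-a_k)$ with $1+\sigma'(s_i)b_i/(a_i-\sigma'(s_i)s_i)$. I would first try to exploit $d_i<0$, which makes the $k=i$ term of $\sum_k c_k/d_k$ negative and hence favourable. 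If this comparison cannot be closed without further assumptions on the $b_k$, the argument still gives the robust conclusion that at least $n-2$ eigenvalues are unstable and at most two are stable.

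For the application, this bound is enough when $n\ge4$. Each saddle then has an unstable manifold of dimension at least $2$, so none is a codimension-one saddle in the sense of~\eqref{eq:eigenv}. Consequently the heteroclinic framework of Theorem~\ref{thm:heteroclinic-stability} cannot apply.
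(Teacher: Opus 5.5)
Your proof is the paper's proof. Lemma~\ref{lem:eigenvalues} in Appendix~\ref{app:counting-eigenvalues} also writes the Jacobian as a diagonal matrix minus a rank-one term, $Df(\bar x_i)=(-\idty+\Sigma A)-\Sigma b\,u^\top$. It then applies the matrix determinant lemma and locates the zeros of $\varphi(\mu)=1+\sum_j\alpha_j/(\mu-\lambda_j)$, which is your $g$ shifted, $\varphi(\mu)=g(\mu-1)$. Your normalized coordinates only conjugate the Jacobian by a diagonal matrix, so the spectrum is the same. You reach exactly what the paper proves: one negative eigenvalue, at least $n-2$ positive ones, and one eigenvalue in $(d_i,d_{(1)})$ of undetermined sign. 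Your direct handling of repeated poles, using $c_k>0$, is cleaner than the paper's closing appeal to openness of~\eqref{eq:cond-phi}.

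Do not try to close your ``main obstacle''; the inequality $1>\sum_k c_k/d_k$ is false in general. Take $\sigma=\tanh$, $n=4$, and $a_k=\frac12$, $b_k=1$ for all $k$, so that $s_k=\frac12\ln 3$. At any $e_i$ this gives:
\begin{itemize}
\item $d_i=\frac34\ln 3-1\approx-0.176$ and $c_i=\frac32$;
\item $d_k=\ln 3-1\approx 0.099$ and $c_k=2$ for $k\neq i$.
\end{itemize}
Hence $\sum_k c_k/d_k\approx-8.5+3\cdot 20.3>1$, so $g(0)<0$. The spectrum is approximately $\{0.099,\,0.099,\,-0.12,\,-7.46\}$, and $\dim\mathcal W^s(\bar x_i)=2$.

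The paper's lemma only asserts ``at least $n-2$ positive and one negative''. Its following remark names $\varphi(1)>0$, i.e.\ your $g(0)>0$, as the extra condition needed for a single stable direction. So the clause ``$\mathcal W^s(\bar x_i)\le 1$'' in the statement is not proved by the paper and is false as written. The correct form is $\dim\mathcal W^u(\bar x_i)\ge n-2$ and $1\le\dim\mathcal W^s(\bar x_i)\le 2$. That is exactly your fallback conclusion, and it is all the $n\ge 4$ application needs.

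A minor point concerns $d_k\neq 0$ for $k\neq i$. From $\sigma'\le 1$ and $\sigma(0)=0$ you only get $\sigma(s)\le s$, not the strict inequality you need. Strictness follows from the first extra hypothesis $\sigma'(s)<\sigma(s)/s$, which rules out $\sigma(t)=t$ on $[0,s]$. Alternatively, it follows from the second hypothesis read as intended, $\sigma(s)/s<\sigma'(0)$. As typed, $\sigma(s)/s<\sigma'(0)/s$ is vacuous for $s>0$ and contradicts $\sigma<1$ for $s<0$.
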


We present the proof of Theorem~\ref{thm:local} in Appendix~\ref{app:counting-eigenvalues}, for completeness. We note that the additional assumption on $\sigma$ is satisfied by most of the commonly used activation functions, as those shown in Figure~\ref{fig:sigmoid}.

\subsection{Robustness under perturbation of equilibria}
We now examine whether the property established in Theorem~\ref{thm:main}
is preserved under small perturbations of the equilibria $\bar x_i$.
Let $\bar X = \diag(a_1,\dots,a_n)$ be the matrix whose columns are
the equilibria $\bar x_i = a_i e_i$, and introduce
\begin{equation*}
    X_\varepsilon = \bar X + E,
    \qquad \|E\| \le \varepsilon,
\end{equation*}
where $\|\cdot\|$ denotes any matrix norm and $\varepsilon > 0$ is
sufficiently small so that $X_\varepsilon \in GL(n,\mathbb{R})$. We further assume that all entries of $X_\varepsilon$ belong to a compact interval subset of $(-1,1)$.
Writing
\begin{equation}
\label{eq:perturbed-equilibria}
    X_\varepsilon = \big( x_{\varepsilon,1} \ \cdots \ x_{\varepsilon,n} \big),
\end{equation}
each column $ x_{\varepsilon,i}$ represents a perturbed equilibrium of the system. The corresponding dynamics is governed by
\begin{equation}
    \label{eq:WC-perturbed-eps}
    \dot x = f_\varepsilon(x), \qquad f_\varepsilon(x) = -x + \sigma(W_\varepsilon x + b), \qquad W_\varepsilon=(\sigma^{-1}(X_\varepsilon)-b\mathbf{1}^\top)X_\varepsilon^{-1},
\end{equation}
where $\sigma^{-1}(X_\varepsilon)$ denotes the matrix obtained by applying $\sigma^{-1}$ componentwise to $X_\varepsilon$.

\begin{corollary}
    \label{cor:perturbations}
    Let the hypotheses of Theorem~\ref{thm:main} hold. For $\varepsilon>0$ sufficiently small, consider equilibria of the form~\eqref{eq:perturbed-equilibria} for the system~\eqref{eq:WC-perturbed-eps}. Then, no heteroclinic cycle can connect the equilibria $x_{\varepsilon, i}$.
\end{corollary}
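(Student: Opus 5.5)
The plan is to avoid building a new Lyapunov function for the perturbed system. Instead I would reuse the function $V$ from~\eqref{eq:V}, built for the unperturbed system~\eqref{eq:WC}, and show that it still decreases by a definite amount along any connecting orbit of~\eqref{eq:WC-perturbed-eps}. Everything is done in the original $x$ coordinates, with $\tilde V(x):=V(x_1/a_1,\dots,x_n/a_n)$.

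\textbf{Step 1: convergence of the vector fields.} First I check that $f_\varepsilon\to f$ in $C^1$ on compact subsets of the open box where $\sigma^{-1}$ is defined. By assumption the entries of $X_\varepsilon$ stay in a compact subinterval of $(-1,1)$, and $X_\varepsilon\to\bar X$. Hence $W_\varepsilon=(\sigma^{-1}(X_\varepsilon)-b\mathbf 1^\top)X_\varepsilon^{-1}\to W$ by continuity of $\sigma^{-1}$ and of matrix inversion on $GL(n,\mathbb R)$, and this gives the $C^1$ convergence. In particular the perturbed equilibria satisfy $x_{\varepsilon,i}\to\bar x_i$.

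\textbf{Step 2: uniform estimates for the unperturbed system.} Fix a compact set $K$ on which all the analysis takes place. I would take $K=\{\tilde V\le \max_i\tilde V(\bar x_i)+1\}$ inside the hypercube $Q$ of the proof of Theorem~\ref{thm:main}. This set is compact because $\int_0^{v_i}q_i$ blows up as $a_iv_i\to\pm$ the endpoints of $\sigma(\mathbb R)$, since $\sigma^{-1}$ diverges there. Choose $\delta>0$ so that the balls $B_i=B(\bar x_i,\delta)$ are disjoint, and assume for simplicity that the $\bar x_i$ are the only zeros of $f$ in $K$. By~\eqref{eq:derivativeV} and compactness there is $c(\delta)>0$ with $\nabla\tilde V\cdot f\le -c(\delta)$ on $K\setminus\bigcup_i B_{i}(\delta/2)$. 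By Step 1, for $\varepsilon$ small we also get $\nabla\tilde V\cdot f_\varepsilon\le -c(\delta)/2$ there, and $x_{\varepsilon,i}\in B(\bar x_i,\delta/2)$.

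\textbf{Step 3: the contradiction.} Suppose a perturbed heteroclinic cycle exists and lies in $K$. Each connecting orbit from $x_{\varepsilon,i}$ to $x_{\varepsilon,i+1}$ must cross the annulus $B_i\setminus B_i(\delta/2)$, which has width $\delta/2$. On $K$ the speed $|f_\varepsilon|$ is bounded by some $M$ uniformly in $\varepsilon$, so the crossing takes time at least $\delta/(2M)$. Along that crossing $\tilde V$ drops by at least $c(\delta)\delta/(4M)$.

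Inside the balls $\tilde V$ can increase only by its oscillation on $B_i$, which is $O(\delta^2)$ because $\nabla\tilde V(\bar x_i)=0$. Summing over the whole cycle, the net change of $\tilde V$ is zero, so
\[
0\le n\,C\delta^2-n\,\frac{c(\delta)\,\delta}{4M}.
\]
This is a contradiction provided $c(\delta)\gg\delta$. That holds if the unperturbed equilibria are nondegenerate, so that $|f|\gtrsim|x-\bar x_i|$ and then $-\nabla\tilde V\cdot f\gtrsim\delta^2$ on the annulus. To make this estimate robust I would redo the bookkeeping with the refined bound $\nabla\tilde V\cdot f_\varepsilon\le -c'\delta^2$ on the annulus, and compare it with the oscillation on a smaller ball $B(\bar x_i,\eta\delta)$ where $\eta\ll1$.

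\textbf{Main obstacle.} I expect the hardest part to be the confinement step: showing that a perturbed cycle must lie in the compact set $K$, uniformly in $\varepsilon$. The issue is that near the boundary of $Q$ the function $\tilde V$ is singular and the $O(\varepsilon)$ error in $\nabla\tilde V\cdot f_\varepsilon$ is not uniform. My first attempt would be to repeat the face-by-face inward-pointing argument from Theorem~\ref{thm:main} on a slightly shrunken box, where $\sigma^{-1}$ is bounded and the perturbation is uniformly small. I would combine this with the fact that the cycle's endpoints lie near the $\bar x_i$, well inside $K$. A secondary point is the nondegeneracy of the $\bar x_i$, which is implicit in their being hyperbolic saddles; I would state it explicitly as the hypothesis used in Step 3.
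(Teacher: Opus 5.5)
Your route differs from the paper's, and Step~3 does not close as written. The paper avoids any gain/loss accounting by normalizing with the $\varepsilon$-dependent map $v=X_\varepsilon^{-1}x$ of \eqref{eq:WC-eps}. The perturbed equilibria then sit exactly at the critical points $e_i$ of the fixed $V$. The $C^1_{\mathrm{loc}}$ convergence $F_\varepsilon\to F$ gives $\nabla V\cdot F_\varepsilon<0$ away from the $e_i$, and the chain $V(e_1)>\cdots>V(e_n)>V(e_1)$ ends the proof.

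The paper's text only establishes this sign on compacts avoiding the $e_i$. Near $e_i$ the sign follows from $DF(e_i)=-\Lambda_i\nabla^2V(e_i)$, with $\Lambda_i$ positive diagonal, once $DF(e_i)$ is invertible. So your concern about the equilibria is well founded.

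In your fixed coordinates $x_{\varepsilon,i}$ is not a critical point of $\tilde V$. Hence $\nabla\tilde V\cdot f_\varepsilon$ really does change sign near $\bar x_i$, and some accounting is unavoidable. The problem is that your balance has the wrong scaling:
\begin{itemize}
\item Near $\bar x_i$ both $\nabla\tilde V$ and $f$ are $O(|x-\bar x_i|)$, so $c(\delta)=O(\delta^2)$ in every case.
\item Nondegeneracy gives $c(\delta)\asymp\delta^2$, which is not $\gg\delta$.
\item Your loss per crossing, $c(\delta)\delta/(4M)$, is therefore $O(\delta^3)$, against an oscillation of order $\delta^2$, so the displayed inequality is no contradiction.
\end{itemize}
Your refined variant works only if $\eta\lesssim\sqrt{\delta/M}$ and you also show that nothing is gained on $B_i(\delta/2)\setminus B(\bar x_i,\eta\delta)$.

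The clean repair is to let $\varepsilon$, not $\delta$, shrink the region where $\tilde V$ can increase.
\begin{itemize}
\item By \eqref{eq:derivativeV}, $\nabla\tilde V\cdot f\le -c_0|\nabla\tilde V|^2$ on your compact set. Hence $\nabla\tilde V\cdot f_\varepsilon>0$ only where $|\nabla\tilde V|<\|f_\varepsilon-f\|_{C^0}/c_0$.
\item Assume the $\bar x_i$ are the only zeros of $f$ there; the paper uses this implicitly as well. Then that set, together with the $x_{\varepsilon,i}$, lies in balls $B(\bar x_i,\rho_\varepsilon)$ with $\rho_\varepsilon\to 0$.
\item Fix $\delta$ first. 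Any passage between two distinct $\rho_\varepsilon$-balls crosses an annulus of width $\delta/2$ where $-\nabla\tilde V\cdot f_\varepsilon\ge c(\delta)/2$. It therefore loses at least $D:=c(\delta)\delta/(4M)$, independently of $\varepsilon$.
\item Then take $\varepsilon$ so small that $\operatorname{osc}_{B(\bar x_i,\rho_\varepsilon)}\tilde V<D/2$.
\item Cut each connection into transitions between distinct balls and stays near a single ball. Each transition loses at least $D$. Each stay gains at most the oscillation, however often the ball is re-entered, because both endpoints lie in the same closed ball.
\item There are only finitely many transitions, since each transition plus the following stay costs at least $D/2$ and $\tilde V$ is bounded. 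A connection with $k\ge 1$ transitions has $k+1\le 2k$ stays.
\end{itemize}
So the total change of $\tilde V$ around the cycle is negative, contradicting that it vanishes. Once repaired, your accounting costs more work than the paper's normalization, but it needs no nondegeneracy of the $\bar x_i$.

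Finally, your set $\{\tilde V\le\max_i\tilde V(\bar x_i)+1\}$ need not be compact. For $\tanh$ and the logistic function of Figure~\ref{fig:sigmoid}, $\sigma^{-1}$ has only logarithmic singularities, so $\int_0^{v_i}q_i$ stays bounded up to $\partial Q$. Your fallback is the right one and is easy:
\begin{itemize}
\item Set $R:=1+\sup\{\|W_\varepsilon x+b\|_\infty : x\in[-1,1]^n,\ \varepsilon<\varepsilon_0\}$. Then $f_\varepsilon$ points strictly inward on the faces of $[\sigma(-R),\sigma(R)]^n$.
\item This compact box lies where $\tilde V$ is $C^2$.
\item It contains each $x_{\varepsilon,i}=\sigma(W_\varepsilon x_{\varepsilon,i}+b)$, and hence every orbit emanating from them.
\end{itemize}
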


\begin{proof}
Analogously to Theorem~\ref{thm:main}, we introduce normalized coordinates
\[
v = X_\varepsilon^{-1} x,
\]
so that the equilibria $x_{\varepsilon,i}$ are mapped to the canonical vectors $e_i$, with $i=1,\dots,n$.
In this setting, the system~\eqref{eq:WC-perturbed-eps} becomes
\begin{equation}
    \label{eq:WC-eps}
    \dot v = F_\varepsilon(v),
    \qquad
    F_\varepsilon(v)
    = -v + X_\varepsilon^{-1}
      \sigma\bigl(z_\varepsilon(v)\bigr),
    \qquad
    z_\varepsilon(v)
    := \bigl(\sigma^{-1}(X_\varepsilon) - b\,\mathbf{1}^\top\bigr)v + b.
\end{equation}
For $\varepsilon=0$, this system coincides with the unperturbed vector field $F$ defined in~\eqref{eq:WC-change-coord}. 
 
Let $V$ be the function defined in~\eqref{eq:V} used in the proof of Theorem~\ref{thm:main}. By Lemma~\ref{lem:Feps-C1loc} in Appendix~\ref{app:proof}, and the fact that $\nabla V$ is continuous and fixed, it follows that for every compact set $K \subset \mathbb{R}^n$, it holds
\[
\sup_{x \in K} |\nabla V(x)\cdot F_\varepsilon(x)- \nabla V(x)\cdot F(x)| \to 0
\qquad \text{ as } \qquad \varepsilon \to 0.
\]
On the other hand, by the proof of Theorem~\ref{thm:main},
\[
\nabla V(v)\cdot F(v)<0
\qquad \text{for every } v\in \mathbb R^n\setminus\{e_1,\dots,e_n\}.
\]
Therefore, for every compact set
\[
K\subset \mathbb R^n\setminus\{e_1,\dots,e_n\},
\]
there exists $\varepsilon_K>0$ such that
\[
\nabla V(v)\cdot F_\varepsilon(v)<0
\qquad
\text{for all } v\in K,\ \varepsilon\in(0,\varepsilon_K).
\]

Assume now by contradiction that there exists a heteroclinic cycle $\Gamma_\varepsilon$ for \eqref{eq:WC-eps}, cyclically connecting the equilibria $e_i$. Since along each heteroclinic connection $\Gamma_{i,\varepsilon}^{+}$ among $e_i$ and $e_{i+1}$ 
\[
\frac{d}{dt}V(\gamma(t))
=
\nabla V(\Gamma_{i, \varepsilon}^+(t))\cdot F_\varepsilon(\Gamma_{i, \varepsilon}^+(t)) < 0
\]
on a nontrivial time interval, it follows that
\[
V(e_{i+1}) < V(e_i)
\qquad \text{for every } \quad i = 1, \ldots, n \quad (\text{indices modulo }n).
\]
Therefore
\[
V(e_1)>V(e_{2})>\cdots>V(e_{n})>V(e_{1}),
\]
concluding the proof.
\end{proof}

\section{Biologically-interpretable representation of sequential states}
\label{sec:UATprocedure}
Although heteroclinic dynamics naturally capture sequential activity, the previous section showed that neural-field models~\eqref{eq:WC} with equilibria on the coordinate axes do not support heteroclinic orbits. To reconcile the cyclical structure with neurally interpretable dynamics, we present here a method to realize heteroclinic-like behavior within a neural network that can be interpreted as a high-dimensional neural-field system.

\subsection{Universal approximation theorem for aggregated dynamics}
\label{sec:UATresults}
We begin by recalling the classical universal approximation theorem for neural networks. A general overview of the mathematical theory of deep learning can be found in \cite{petersen2024mathematical}.

Let $C^1(X,\mathbb{R}^{m})$ denote the set of continuous functions with continuous derivatives from a subset $X \subseteq \mathbb{R}^{n}$ to $\mathbb{R}^{m}$.
For $f \in C^1(K,\mathbb{R}^m)$, with $K \subseteq \mathbb{R}^n$ compact, we recall the definition of its $C^1$-norm
\begin{equation}
    \label{def:C1norm}
    \|  f \|_{C^1(K)} := \sup_{x \in K} \big( \|f(x)\| + \|Df(x)\| \big),
\end{equation}
where $\|f(x)\|$ denotes the Euclidean norm in $\mathbb{R}^m$,
$Df(x)$ is the differential of $f$ at $x$, and $\|Df(x)\|$ is its operator norm,
subordinate to the vector norms on $\mathbb{R}^n$ and $\mathbb{R}^m$.

\begin{theorem}[Universal Approximation Theorem, {\cite{cybenko1989approximation,pinkus1999approximation}}]
    \label{theo:UAT}
    Suppose that $\sigma \in C^1(\mathbb{R},\mathbb{R})$ is not a polynomial. Then, for every $n,m \in \mathbb{N}$, compact $K \subseteq \mathbb{R}^{n}$, $\tilde g \in C^1(K,\mathbb{R}^{m})$, and $\varepsilon > 0$, there exist $N \in \mathbb{N}$, $W \in \mathbb{R}^{N \times n}$, $b \in \mathbb{R}^{N}$, and $P \in \mathbb{R}^{m \times N}$ such that
    \begin{equation}
        \| \tilde g - \tilde f \|_{C^1(K)} < \varepsilon ,
    \end{equation}
    where $ \tilde f(x) = P \sigma(Wx + b)$.
\end{theorem}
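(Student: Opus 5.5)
This is a classical result, so the plan is to reduce it to Pinkus' density theorem for one-hidden-layer networks in $C^1$ on compact sets \cite{pinkus1999approximation}, rather than reprove it from scratch.

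\emph{Step 1: extend and smooth the target.} Fix a compact $K\subseteq\mathbb{R}^n$ and $\tilde g\in C^1(K,\mathbb{R}^m)$. We interpret $C^1(K)$ as restrictions of $C^1$ functions on an open neighbourhood of $K$, or equivalently we use a Whitney extension. This lets us assume $\tilde g\in C^1(\mathbb{R}^n,\mathbb{R}^m)$ with compact support. Convolving with a mollifier gives $g_\delta\in C^\infty_c$ with $\|\tilde g-g_\delta\|_{C^1(K)}<\varepsilon/3$.

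\emph{Step 2: reduce to ridge functions, then to one dimension.} First we treat each output component separately. If every scalar component $g_j$ is approximated within $\varepsilon/(3m)$ by $P_j\sigma(W_jx+b_j)$, then stacking the hidden units gives a single network. The width $N$ is the sum of the widths, $W$ and $b$ are the concatenations, and $P$ is block-sparse. For a scalar $C^\infty$ function we use the fact that finite sums of ridge functions $\sum_k h_k(w_k\cdot x)$ with $h_k\in C^\infty(\mathbb{R})$ are dense in $C^1(K)$. One route is to approximate $g$ by a polynomial in $C^1(K)$ (Stone--Weierstrass applied to derivatives, via a Nachbin-type argument). Each homogeneous polynomial of degree $d$ is then a finite linear combination of $(w\cdot x)^d$, by the classical polarization/span argument. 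It therefore suffices to approximate, for a fixed direction $w$, the univariate polynomial $t\mapsto t^d$ in $C^1$ on the bounded interval $\{w\cdot x : x\in K\}$ by sums $\sum_\ell c_\ell\sigma(\lambda_\ell t+\theta_\ell)$. The chain rule then transfers $C^1$ closeness from $t$ to $x$, with constants depending on $|w|$ and $\mathrm{diam}\,K$.

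\emph{Step 3: the univariate step.} This is the main obstacle. The tool is divided differences in the scale parameter. For smooth $\sigma$,
\[
\frac{\partial^d}{\partial\lambda^d}\sigma(\lambda t+\theta)\Big|_{\lambda=0}=t^d\sigma^{(d)}(\theta),
\]
and finite-difference quotients of $\lambda\mapsto\sigma(\lambda t+\theta)$ converge to this in $C^1$ on compact $t$-sets. If $\sigma$ is not a polynomial, then for each $d$ some $\theta$ has $\sigma^{(d)}(\theta)\neq0$, so $t^d$ lies in the $C^1$-closure of the span.

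The difficulty is that $\sigma$ is only $C^1$. We first mollify $\sigma$ as $\sigma*\phi_\eta=\int\sigma(\cdot-y)\phi_\eta(y)\,dy$. This mollified function is a $C^1$-limit of Riemann sums of translates of $\sigma$, so it lies in the closure, and it is not a polynomial for small $\eta$. We then run the argument with $\sigma*\phi_\eta$ in place of $\sigma$. Finally, combining the three $\varepsilon/3$ errors gives $\|\tilde g-P\sigma(W\cdot+b)\|_{C^1(K)}<\varepsilon$.
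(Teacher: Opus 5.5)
Your proposal is correct and follows the paper's route. The paper invokes Pinkus' $C^1$ density theorem \cite{pinkus1999approximation} for scalar outputs and passes to $m>1$ componentwise; your explicit stacking of hidden units (concatenating the $W_j$, $b_j$ and taking $P$ block-sparse) is what actually justifies that step, since all outputs must share one hidden layer, and your Steps~1--3 simply unpack Pinkus' own proof. The one claim you state without justification, that $\sigma*\phi_\eta$ is not a polynomial for small $\eta$, needs an argument: Pinkus supplies one via a Baire-category lemma, and for the bounded $\sigma$ of this paper a Fourier-support argument also works. In fact a single $\phi\in C_c^\infty$ with $\sigma*\phi$ non-polynomial is all your argument uses.
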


For the proof, we refer to \cite{cybenko1989approximation, pinkus1999approximation}, originally stated for $m=1$. The general case follows since uniform convergence in $\mathbb{R}^m$ reduces to uniform convergence in each coordinate.

\begin{remark}
    Although the classical universal approximation theorem is usually formulated in the $C^0$ norm \cite{cybenko1989approximation}, for our purposes we require $C^1$-approximation \cite{pinkus1999approximation}.
    The key requirement is that $\sigma$ is $C^1$ and non-polynomial; under this assumption, neural networks can uniformly approximate both a function and its derivatives on compact sets, meaning that the approximation error can be bounded uniformly for all $x \in K$.
    Universal approximation theorems can also be established for various other function classes
    and topologies (e.g., \cite{hornik1990universal}).
\end{remark}

We are interested in an extension of the universal approximation theorem, where the approximating function has the form $f(x) = -Lx + \tilde f(x)$,
with $L \in \mathbb{R}^{n \times n}$.

\begin{corollary}
    \label{cor:UAT}
    Suppose that $\sigma \in C^1(\mathbb{R},\mathbb{R})$ is not a polynomial. Then, for every $n,m \in \mathbb{N}$, compact $K \subseteq \mathbb{R}^{n}$, $g \in C^1(K,\mathbb{R}^{m})$, $L \in \mathbb{R}^{n\times n}$ and $\varepsilon > 0$, there exist $N \in \mathbb{N}$, $W \in \mathbb{R}^{N \times n}$, $b \in \mathbb{R}^{N}$, and $P \in \mathbb{R}^{m \times N}$ such that
    \begin{equation*}
        \| g - f \|_{C^1(K)} < \varepsilon ,
    \end{equation*}
    where  $f(x) = -Lx + P \, \sigma(Wx+b)$.
\end{corollary}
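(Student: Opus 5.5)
The plan is to reduce the statement to Theorem~\ref{theo:UAT} by absorbing the linear term into the target function. Given $g \in C^1(K,\mathbb{R}^m)$ and $L$, I define the auxiliary target
\[
\tilde g(x) := g(x) + Lx, \qquad x \in K .
\]
For this to make sense, $Lx$ must take values in $\mathbb{R}^m$, so I read the statement with $m=n$ (or, more generally, with $L\in\mathbb{R}^{m\times n}$). This is the case of interest anyway, since $f$ is meant to be a vector field on $\mathbb{R}^n$. Since $x\mapsto Lx$ is linear, hence smooth, we have $\tilde g \in C^1(K,\mathbb{R}^m)$.

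Next I apply Theorem~\ref{theo:UAT} to $\tilde g$ on the same compact set $K$ with the same tolerance $\varepsilon$. This yields $N$, $W\in\mathbb{R}^{N\times n}$, $b\in\mathbb{R}^N$ and $P\in\mathbb{R}^{m\times N}$ such that $\tilde f(x)=P\sigma(Wx+b)$ satisfies $\|\tilde g-\tilde f\|_{C^1(K)}<\varepsilon$.

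Finally I set $f(x) := -Lx + \tilde f(x) = -Lx + P\sigma(Wx+b)$ and compute the error pointwise on $K$. At the level of values,
\[
g(x)-f(x) = \bigl(g(x)+Lx\bigr)-\tilde f(x) = \tilde g(x)-\tilde f(x).
\]
At the level of differentials, $Dg(x)-Df(x) = Dg(x)+L-D\tilde f(x) = D\tilde g(x)-D\tilde f(x)$. The two differences therefore coincide together with their first derivatives, and by definition~\eqref{def:C1norm} we get $\|g-f\|_{C^1(K)}=\|\tilde g-\tilde f\|_{C^1(K)}<\varepsilon$.

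No step is a real obstacle. The only delicate point is the dimensional bookkeeping between $L\in\mathbb{R}^{n\times n}$ and the codomain $\mathbb{R}^m$. I would handle it by stating explicitly that $m=n$ (or that $L$ maps into $\mathbb{R}^m$). I would also recall that $C^1(K,\mathbb{R}^m)$ for a compact $K$ is understood, as in Theorem~\ref{theo:UAT}, in the sense that derivatives extend continuously, so that adding the linear map $x\mapsto Lx$ preserves the class.
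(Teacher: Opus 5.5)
Your proposal is correct and is essentially the paper's proof: you absorb the linear term into the target $\tilde g(x)=g(x)+Lx$, apply Theorem~\ref{theo:UAT}, and observe that $g-f=\tilde g-\tilde f$ holds together with the differentials, so the two $C^1(K)$ norms coincide. Your remark that the statement implicitly needs $m=n$ (or $L\in\mathbb{R}^{m\times n}$) correctly identifies a notational slip in the statement, which the paper resolves only afterwards by setting $m=n$.
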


\begin{proof}
    Define $\tilde g(x) := g(x)+Lx$.
    By applying Theorem~\ref{theo:UAT} to $\tilde{g}$ over $K$, there exists a function $\tilde{f}$ such that  $\|\tilde f-\tilde g\|_{C^1(K)}\le \varepsilon$. Since  $\|\tilde f-\tilde g\|_{C^1(K)} = \|f-g\|_{C^1(K)}$, with $f = \tilde f - Lx$, we finally prove the claim.
\end{proof}

In the following, we set $m=n$ and $L=\idty$, where $\idty$ denotes the identity matrix on $\mathbb{R}^n$. Moreover, introducing the parameter space
\begin{equation}
    \label{eq:parameter-set}
    \Theta \coloneqq \{\theta = (P, W, b) \mid P \in \mathbb{R}^{n \times N},\, W \in \mathbb{R}^{N \times n},\, b \in \mathbb{R}^N \} \simeq \mathbb{R}^k,
\end{equation}
with $k = N(1+2n)$, we define for every $\theta \in \Theta$ the corresponding function
\begin{equation}
    \label{eq:univ-approx}
    f_\theta(x) := -x + P \, \sigma(Wx+b), \qquad x \in \mathbb{R}^n,
\end{equation}
making explicit the dependence on the parameters.

\subsubsection{Interpretation in terms of neural populations}
\label{sec:interpretation}
To clarify the biological intuition behind the universal approximation result, we reinterpret the network defined through~\eqref{eq:univ-approx} in terms of neural populations and their collective dynamics.

Fix $N \in \mathbb{N}$ and consider $\theta \in \Theta$ as in~\eqref{eq:parameter-set}. Let $y(t) \in \mathbb{R}^N$ evolve according to
\begin{equation}
    \label{eq:WC-approx}
    \dot{y} = H(y), \qquad H(y)= -y + \sigma(W P y + b),
\end{equation}
where $\sigma \in C^1(\mathbb{R}, \mathbb{R})$ is a non-polynomial activation function applied componentwise, that we assume to satisfy the properties of Section~\ref{ssec:MFeq}. This system describes the activity of $N$ interacting neural populations.

Projecting the high-dimensional dynamics $y(t)$ through $P$, yields the low-dimensional trajectory in $\mathbb{R}^n$
\begin{equation}
    \label{eq:projection-dynamics}
    x(t) := P y(t) \quad \text{ satisfying }\quad\dot{x} = f_\theta(x).
\end{equation}

Intuitively, by approximating the dynamics $g$ with  $f_\theta$, we can reinterpret the system~\eqref{eq:univ-approx} as the projection of the system~\eqref{eq:WC-approx}.
In this lifted space $\mathbb{R}^N$, the evolution is governed by neural mean-field equations of type~\eqref{eq:WC-approx}, whose state variables $y_i$ represent neural population activities. Applying the projection $P$ maps these neural variables back to the observable space $\mathbb{R}^n$, where the resulting dynamics coincides with~\eqref{eq:projection-dynamics}.

\begin{remark}
    This neural interpretation applies to any vector field $g$ admitting an approximation of the form~\eqref{eq:univ-approx}; it relies on the approximation scheme rather than on the specific structure of $g$.
\end{remark}

To improve interpretability, we can impose a block-diagonal structure on $P$
\begin{equation}
    \label{eq:block-structure}
    P =
    \begin{pmatrix}
        P_1    & 0      & \cdots & 0      \\
        0      & P_2    & \cdots & 0      \\
        \vdots & \vdots & \ddots & \vdots \\
        0      & 0      & \cdots & P_n
    \end{pmatrix},
    \quad P_i \in \mathbb{R}^{1 \times N_i}, \quad \sum_{i=1}^n N_i = N
\end{equation}
so that each low-dimensional component $x_i$ depends only on a specific subset of neural variables, corresponding to a distinct subpopulation.
The following corollary shows that this structural constraint does not affect the approximation capability.

\begin{corollary}
    \label{prop:blockP}
    Suppose that $\sigma \in C^1(\mathbb{R},\mathbb{R})$ is not a polynomial. Then, for every $n \in \mathbb{N}$, compact $K \subseteq \mathbb{R}^{n}$, $g \in C^1(K,\mathbb{R}^{n})$, and $\varepsilon > 0$, there exist $N \in \mathbb{N}$, $W \in \mathbb{R}^{N \times n}$, $b \in \mathbb{R}^{N}$, and a block diagonal $P \in \mathbb{R}^{n \times N}$ of the form~\eqref{eq:block-structure} such that $\| g - f_\theta \|_{C^1(K)} < \varepsilon ,$ where $f_\theta(x) = -x + P \, \sigma(Wx+b)$.
\end{corollary}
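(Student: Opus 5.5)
The approach is to apply the scalar case ($m=1$) of Theorem~\ref{theo:UAT} separately to each component of the shifted target, and then stack the resulting networks.

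Set $\tilde g(x) := g(x) + x$, which belongs to $C^1(K,\mathbb{R}^n)$, and write $\tilde g = (\tilde g_1,\dots,\tilde g_n)$ with $\tilde g_i \in C^1(K,\mathbb{R})$. For each $i$, apply Theorem~\ref{theo:UAT} with $m=1$, target $\tilde g_i$ and tolerance $\varepsilon/n$. This gives $N_i\in\mathbb{N}$, $W^{(i)}\in\mathbb{R}^{N_i\times n}$, $b^{(i)}\in\mathbb{R}^{N_i}$ and a row vector $P_i\in\mathbb{R}^{1\times N_i}$ such that
\[
\|\tilde g_i - P_i\,\sigma(W^{(i)}x+b^{(i)})\|_{C^1(K)}<\varepsilon/n .
\]

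Next, stack the networks. Set $N=\sum_i N_i$, let $W\in\mathbb{R}^{N\times n}$ be the vertical concatenation of $W^{(1)},\dots,W^{(n)}$, let $b$ be the concatenation of the $b^{(i)}$, and let $P$ be the block-diagonal matrix \eqref{eq:block-structure} built from the $P_i$. Since $\sigma$ acts componentwise, $\sigma(Wx+b)$ is the concatenation of the vectors $\sigma(W^{(i)}x+b^{(i)})$. The block structure of $P$ then gives
\[
\bigl(P\sigma(Wx+b)\bigr)_i = P_i\,\sigma(W^{(i)}x+b^{(i)}).
\]
Hence the $i$-th component of $\tilde f(x) := P\sigma(Wx+b)$ is exactly the $i$-th scalar approximant.

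Finally, combine the componentwise bounds into a vector bound. For a vector-valued $h=(h_1,\dots,h_n)$ we have $\|h(x)\|\le\sum_i|h_i(x)|$. Also $\|Dh(x)\|\le\sum_i\|Dh_i(x)\|$, because $Dh(x)=\sum_i e_i\,Dh_i(x)$ and each term $e_i\,Dh_i(x)$ has operator norm $\|Dh_i(x)\|$. Therefore $\|h\|_{C^1(K)}\le\sum_i\|h_i\|_{C^1(K)}$, so $\|\tilde g-\tilde f\|_{C^1(K)}<\varepsilon$. Defining $f_\theta(x)=-x+\tilde f(x)$ gives $f_\theta-g=\tilde f-\tilde g$, exactly as in the proof of Corollary~\ref{cor:UAT}, and the claim follows.

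There is no real obstacle here. The only point needing care is the norm bookkeeping in the last step, and choosing the per-component tolerance $\varepsilon/n$ makes the sum bound suffice without any comparison between Euclidean and $\ell^1$ norms.
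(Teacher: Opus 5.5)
Your proof is correct and follows the same route as the paper. You approximate each shifted component $g_i(x)+x_i$ by a scalar one-hidden-layer network, which the paper phrases as applying Corollary~\ref{cor:UAT} componentwise, and then stack the hidden layers so that $P$ becomes block diagonal; your tolerance $\varepsilon/n$ and the bound $\|h\|_{C^1(K)}\le\sum_i\|h_i\|_{C^1(K)}$ make explicit the norm bookkeeping that the paper leaves implicit in ``concatenating the results.''
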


\begin{proof}
    Under the block-diagonal constraint~\eqref{eq:block-structure}, each scalar component $g_i(x)$ can be approximated independently by a one-dimensional function $f_i(x) = -x_i + P_i \sigma(W_i x + b_i)$, with $W_i \in \mathbb{R}^{N_i \times n}$  and $b_i \in \mathbb{R}^{N_i}$. The claim follows applying Corollary~\ref{cor:UAT} to each component and concatenating the results.
\end{proof}

With these constraints on $P$, the relationships between the lifted neural dynamics and the projected observable system follows the scheme illustrated in Figure~\ref{fig:neural-interpretability}.

\begin{figure}[tb]
    \centering
    \includegraphics[width=0.9\linewidth]{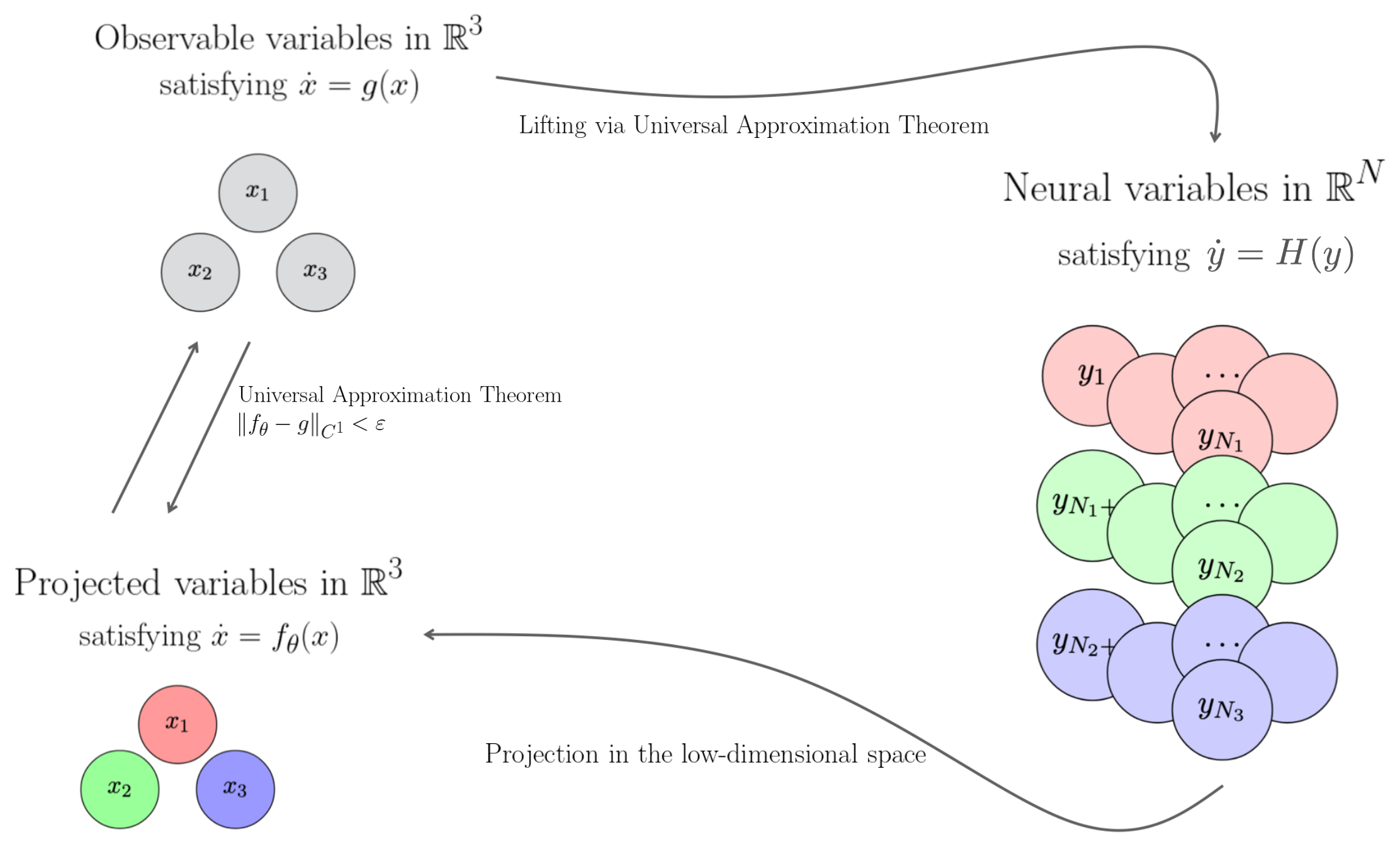}
    \caption{Schematic of neural interpretability. The image illustrates how the network-approximated dynamics can be linked to a neural model simulating population activity. We start from the target dynamics $g$ defined in $\mathbb{R}^n$, the low-dimensional space of observable variables (here $n = 3$). By the universal approximation theorem, there exists a network $f_{\theta}$ that approximates $g$ in the $C^{1}$ norm. This allows us to lift the dynamics into a higher-dimensional neural space $\mathbb{R}^N$, where the system follows neural-field type dynamics $\dot{y} = H(y)$. Projecting these neural dynamics back into the low-dimensional space maps neural activity to interpretable variables satisfying $\dot{x} = f_{\theta}(x)$.}
    \label{fig:neural-interpretability}
\end{figure}

\subsection{Reproduction of sequential states}
\label{sec:sequential-periodic}
We now turn to the case in which the target vector field $g$ displays an asymptotically stable heteroclinic cycle $\Gamma$, as recalled in Section~\ref{sec:heterocline}.
We aim to understand how much of this sequential structure is
preserved by an approximation of the form $f_\theta$, and in particular which dynamical features of the heteroclinic behavior survive under $C^1$-small perturbations.

\subsubsection{Generic loss of equilibria}
\label{sec:loss-equilibria}
As a first step, we analyze the behavior of $f_\theta$ in a neighborhood of the equilibria of the target vector field $g$.

Given a point $x\in\mathbb{R}^n$, we consider the set of parameters for which the approximating vector field $f_\theta$ vanishes at $x$, namely
\begin{equation*}
    \zeta_x \coloneqq \left\{ \theta \in \mathbb{R}^k \;\middle|\; f_\theta(x) = 0 \right\}.
\end{equation*}

\begin{proposition}
    \label{prop:submanifold-lower-dim}
    For every $x \in \mathbb{R}^n$ , the set $\zeta_x$ is a submanifold of $\mathbb{R}^k$ of dimension strictly lower than $k$.
\end{proposition}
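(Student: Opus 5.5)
The plan is to realise $\zeta_x$ as the zero set of a smooth map and apply the regular value theorem. Fix $x\in\mathbb{R}^n$ and define
\[
\Phi_x:\Theta\simeq\mathbb{R}^k\to\mathbb{R}^n,\qquad \Phi_x(\theta)=f_\theta(x)=-x+P\,\sigma(Wx+b),
\]
so that $\zeta_x=\Phi_x^{-1}(0)$. Since $\sigma\in C^1$, $\Phi_x$ is $C^1$. If $0$ is a regular value of $\Phi_x$, that is, if $D\Phi_x(\theta)$ is surjective at every $\theta\in\zeta_x$, then $\zeta_x$ is an embedded $C^1$ submanifold of dimension $k-n<k$. (If $\zeta_x$ is empty the claim is vacuous.) It therefore suffices to check surjectivity of the differential, and the variations in $P$ and in $b$ cover the two cases that can occur.

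\emph{Variation in $P$.} For $\dot P\in\mathbb{R}^{n\times N}$ we have $D_P\Phi_x(\theta)[\dot P]=\dot P\,c$, where $c:=\sigma(Wx+b)\in\mathbb{R}^N$. If $c\neq 0$, pick an index $j$ with $c_j\neq 0$. Taking $\dot P=c_j^{-1}\,w\,e_j^\top$ gives $\dot P c=w$ for any $w\in\mathbb{R}^n$, so the differential is surjective. On $\zeta_x$ we have $x=P\,c$, so $c=0$ forces $x=0$. Hence for every $x\neq0$, $0$ is a regular value and $\zeta_x$ is a submanifold of codimension $n$.

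\emph{Variation in $b$ (the case $x=0$).} Here $\zeta_0=\{P\sigma(b)=0\}$, and the $P$-argument still works at every point with $\sigma(b)\neq0$. The remaining points have $\sigma(b)=0$, i.e.\ $b=0$, since $\sigma$ is strictly increasing with $\sigma(0)=0$. At such points I use
\[
D_b\Phi_0(\theta)[\dot b]=P\,\diag(\sigma'(b))\,\dot b=P\,\dot b,
\]
because $\sigma'(0)=1$. This is surjective exactly when $\operatorname{rank}P=n$.

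\emph{Main obstacle.} The hard step is the locus $\{x=0,\ b=0,\ \operatorname{rank}P<n\}$. Near such a point, e.g.\ $P=0,\,b=0$, the equation $P\sigma(b)=0$ is genuinely quadratic, so the regular value argument fails there. I will handle this by working, as the interpretation of Section~\ref{sec:interpretation} suggests, with projections $P$ of full rank $n$. This is automatic for the block structure~\eqref{eq:block-structure} with all $P_i\neq0$, and the full-rank set is open and dense in $\Theta$. On that open set the two cases above show that $0$ is a regular value of $\Phi_x$ for every $x$, which gives the stated submanifold property with dimension $k-n<k$. If one wants to include rank-deficient $P$ as well, I would stratify $\zeta_0$ by $\operatorname{rank}P$: each stratum is a submanifold of dimension $<k$, and in either formulation $\zeta_x$ has empty interior. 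That is the only feature used later to argue that generic $\theta$ loses the equilibria of $g$.
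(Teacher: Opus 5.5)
For $x\neq0$ your argument is the paper's. You apply the regular value theorem to $\theta\mapsto f_\theta(x)$ and get surjectivity from the $P$-block, because $x=P\sigma(Wx+b)$ on $\zeta_x$ forces $\sigma(Wx+b)\neq0$. This gives dimension $k-n$.

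At $x=0$ you go further than the paper, and you are right to. The paper splits $\zeta_0$ into two pieces: $\{b=0\}$, a linear subspace of dimension exactly $k-N$ (its ``strictly smaller than $k-N$'' is a slip), and $\{P\sigma(b)=0,\ b\neq0\}$, a regular level set of dimension $k-n$. It stops there. That shows only that $\zeta_0$ is a union of two submanifolds of dimension $<k$, not that it is one.

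Your $b$-variation computation locates exactly where the union fails to be a manifold. Points with $b=0$ and $\operatorname{rank}P=n$ are regular, so the trouble is confined to $\{b=0,\ \operatorname{rank}P<n\}$. There the singularity is genuine: for $n=N=1$, $\zeta_0=\{p=0\}\cup\{b=0\}\subset\mathbb{R}^3$ is two planes crossing along the $w$-axis. So the proposition as literally stated is false at $x=0$, and a repair like yours is unavoidable.

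Of your two repairs, restricting to full-rank $P$ gives the clean codimension-$n$ statement on an open dense subset of $\Theta$. For a statement on all of $\Theta$, you do not need to stratify by $\operatorname{rank}P$. That route needs more than the regular value theorem, since at $b=0$ the differential along a rank-$r<n$ stratum has image $\operatorname{im}P\neq\mathbb{R}^n$. The split $b=0$ versus $b\neq0$ already writes $\zeta_0$ as a union of two submanifolds of dimensions $k-N$ and $k-n$.

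Either way, $\zeta_x$ is closed and Lebesgue-null. That is all Lemma~\ref{lem:genericity-x} and Proposition~\ref{prop:zeros} actually use.
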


\begin{proof}
    By definition, the condition $f_\theta(x) = 0$ reads
    \begin{equation*}
        -x + P \sigma(W x + b) = 0.
    \end{equation*}
    We distinguish two cases.

    \textbf{Case 1: } $x=0$. Here $f_\theta(0) = P \sigma(b)$. If $\sigma(b) = 0$, our assumptions on $\sigma$ (recall Section~\ref{ssec:MFeq}) imply $b=0$. In this case, $\zeta_x$ is a subspace of $\mathbb{R}^k$ of dimension strictly smaller than $k-N$, so the claim follows. If instead $\sigma(b) \neq 0$, then the equilibrium condition reduces to $P\sigma(b) = 0$, which imposes $n$ constraint on the parameters involved, so we conclude.

    \textbf{Case 2: } $x \neq 0$.
    Fix $x \in \mathbb{R}^n$  and consider the map
    \begin{equation*}
        F_x : \mathbb{R}^k \to \mathbb{R}^n, \qquad F_x(\theta) = f_\theta(x).
    \end{equation*}
    If we show that the differential $D_\theta F_x$ is surjective, we apply the constant-rank level set theorem \cite[Thm.~5.13]{lee2003smooth} to conclude that $\zeta_x = F_x^{-1}(0) $ is a submanifold of $\mathbb{R}^k$ of dimension $k-n$.

    Let $\xi = W x + b \in \mathbb{R}^N$ and write $\xi_i$ for its $i$-th component, $x^T = (x_1, \ldots, x_n)$ and $\sigma(\xi)^T = (\sigma(\xi_1), \ldots, \sigma(\xi_N))$.
    Then $D_\theta F_x \in \mathbb{R}^{n \times k}$ decomposes as:
    \begin{equation*}
        D_\theta F_x = \begin{pmatrix}
            \frac{\partial F_x}{\partial P} & \frac{\partial F_x}{\partial W} & \frac{\partial F_x}{\partial b}
        \end{pmatrix},
    \end{equation*}
    with $\frac{\partial F_x}{\partial P} \in \mathbb{R}^{n\times Nn},\frac{\partial F_x}{\partial W} \in \mathbb{R}^{n\times nN}$, and $\frac{\partial F_x}{\partial b} \in \mathbb{R}^{n\times N}$.

    To prove surjectivity, it suffices to examine the block
    \begin{equation*}
        \frac{\partial F_x}{\partial P} =
        \begin{pmatrix}
            \sigma(\xi)^T & 0             & \cdots & 0             \\
            0             & \sigma(\xi)^T & \cdots & 0             \\
            \vdots        &               & \ddots & \vdots        \\
            0             & \cdots        & 0      & \sigma(\xi)^T
        \end{pmatrix}.
    \end{equation*}
    If there exists $i \in \{1, \ldots, N\}$ such that $\sigma(\xi_i) \neq 0$, then the $n$ independent columns of the form $\sigma(\xi_i) e_j$, for $j=1,\ldots,n$,
    span $\mathbb{R}^n$, and surjectivity follows.

    Assume now that $\sigma(\xi_i) = 0$ for all $i$. Then $P \sigma(\xi) = 0$ that would imply $x = 0$, contradicting $x\neq 0$. Thus, at least one of the $\sigma(\xi_i) \neq 0$ and $D_\theta F_x$ is surjective.
\end{proof}

We now recall the following standard Lemma.
\begin{lemma}
    \label{lem:lebesgue}
    Let $C$ be a closed subset of $\mathbb{R}^k$ with zero Lebesgue measure, i.e.~$\mathcal{L}(C)=0$. Then its complement $C^c$ is dense in $\mathbb{R}^k$.
\end{lemma}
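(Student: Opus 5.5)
We argue by contradiction and use only the fact that a nonempty open set has positive Lebesgue measure. The whole proof lives in $\mathbb{R}^k$ with its Euclidean topology and $k$-dimensional Lebesgue measure $\mathcal{L}$.

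Assume $C^c$ is not dense. Then there is a point $\theta_0 \in \mathbb{R}^k$ and a radius $r>0$ such that the open ball $B(\theta_0,r)$ does not meet $C^c$. Equivalently, $B(\theta_0,r)\subseteq C$. By monotonicity of Lebesgue measure,
\[
0 < \mathcal{L}\bigl(B(\theta_0,r)\bigr) \le \mathcal{L}(C) = 0 .
\]
The left inequality holds because the ball contains the cube centered at $\theta_0$ with side $2r/\sqrt{k}$, whose measure is $(2r/\sqrt{k})^k>0$. This is a contradiction, so every open ball meets $C^c$, i.e.\ $C^c$ is dense.

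Closedness of $C$ is not needed for density. It only makes $C^c$ open, hence open and dense, which is the form in which the lemma will be applied afterwards, e.g.\ to sets like $\zeta_x$ from Proposition~\ref{prop:submanifold-lower-dim}.

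The only step that requires any care is positivity of the measure of a ball, and the explicit inscribed cube handles it. For the later application, the nontrivial input is rather that a submanifold of dimension $<k$ has $\mathcal{L}$-measure zero. That follows from covering it by countably many charts whose images are $C^1$ graphs over lower-dimensional sets, which are null. This belongs to the subsequent use of the lemma, not to the lemma itself.
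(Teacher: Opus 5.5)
Your proof is correct and is essentially the paper's argument. Assuming $C^c$ is not dense gives a nonempty open set contained in $C$, whose positive Lebesgue measure contradicts $\mathcal{L}(C)=0$. The paper writes this as $0=\mathcal{L}(U\cap C^c)=\mathcal{L}(U)-\mathcal{L}(U\cap C)=\mathcal{L}(U)\neq 0$, while you apply monotonicity to a ball directly. Your side remark is also right: closedness of $C$ is only used to make $C^c$ open, which is the form needed in Lemma~\ref{lem:genericity-x}.
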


\begin{proof}
    By assumption $C^c$ is an open set. Suppose, by contradiction, that it is not dense in $\mathbb{R}^k$. Then, there exists an open set $U\neq\emptyset $ such that $U\cap C^c=\emptyset$ in particular $U\subset C$. However, the Lebesgue measure gives: $0 = \mathcal{L}(U\cap C^c)= \mathcal{L}(U)-\mathcal{L}(U\cap C)=\mathcal{L}(U)\neq 0$.
\end{proof}

As a direct consequence of Lemma~\ref{lem:lebesgue} and Proposition~\ref{prop:submanifold-lower-dim}, we have the following result.

\begin{lemma}
    \label{lem:genericity-x}
    Let $x \in \mathbb{R}^n$. The set $\left(\zeta_x\right)^c$ is an open and dense subset of $\mathbb{R}^k$.
\end{lemma}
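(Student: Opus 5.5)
The plan is to show separately that $(\zeta_x)^c$ is open and dense. For openness, note that $\zeta_x = F_x^{-1}(0)$, where $F_x:\mathbb{R}^k\to\mathbb{R}^n$, $F_x(\theta)=f_\theta(x) = -x + P\sigma(Wx+b)$, is the map already used in the proof of Proposition~\ref{prop:submanifold-lower-dim}. Since $\sigma$ is continuous, $F_x$ depends continuously on $\theta=(P,W,b)$; it is polynomial in $P$ and a composition of $\sigma$ with affine maps in $(W,b)$. Hence $\zeta_x$ is closed as the preimage of the closed set $\{0\}$, and its complement is open.

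For density, the plan is to invoke Lemma~\ref{lem:lebesgue}, which reduces the task to checking $\mathcal{L}(\zeta_x)=0$. By Proposition~\ref{prop:submanifold-lower-dim}, $\zeta_x$ is a submanifold of $\mathbb{R}^k$ of dimension $d<k$. I will use the standard fact that an embedded submanifold of positive codimension has Lebesgue measure zero. To justify it, cover $\zeta_x$ by countably many coordinate charts, using second countability. Each chart piece is the image of an open subset of $\mathbb{R}^d$ under a $C^1$ map. Composing with the projection $\mathbb{R}^k\to\mathbb{R}^d$, each piece becomes the image of $U\times\{0\}\subset\mathbb{R}^k$, a null set, under a $C^1$ map $\mathbb{R}^k\to\mathbb{R}^k$. $C^1$ maps send null sets to null sets because they are locally Lipschitz. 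A countable union of null sets is null, so $\mathcal{L}(\zeta_x)=0$ and Lemma~\ref{lem:lebesgue} gives density.

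The main obstacle is the degenerate case $x=0$ in Proposition~\ref{prop:submanifold-lower-dim}. There $\zeta_0$ is a union of pieces rather than a single level set of a submersion: the piece $\{b=0\}$ and the piece $\{P\sigma(b)=0,\ \sigma(b)\neq 0\}$. I will handle this by treating each piece directly rather than relying on the submanifold statement. The set $\{b=0\}$ is a proper linear subspace, hence null. On the open set $\{\sigma(b)\neq0\}$, the map $\theta\mapsto P\sigma(b)$ has surjective differential in $P$, by the same block argument as in Case 2 of the proof of Proposition~\ref{prop:submanifold-lower-dim}. So that piece is a codimension-$n$ submanifold and is also null. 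The union of the two pieces is therefore null, and the argument above applies unchanged.
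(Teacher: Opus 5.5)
Your proposal is correct and follows the paper's route: $\zeta_x$ is closed by continuity of $\theta\mapsto f_\theta(x)$ and Lebesgue-null as a positive-codimension submanifold (Proposition~\ref{prop:submanifold-lower-dim}), so Lemma~\ref{lem:lebesgue} gives density; you also supply the closedness and measure-zero details that the paper leaves implicit. Your separate handling of $x=0$ is more careful than the paper's, since $\zeta_0=\{P\sigma(b)=0\}$ need not be a submanifold (for $n=N=1$ it is the union of the planes $\{P=0\}$ and $\{b=0\}$), yet it is still a union of two null sets, which is all the argument needs.
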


This lemma formalizes the notion of genericity at a single point: for any fixed $x \in \mathbb{R}^n$, a generic choice of parameters $\theta$ ensures that $f_\theta(x)\neq 0$.
We can extend this pointwise genericity to the case in which the zeros set of $g$ is at most countable.

\begin{proposition}
    \label{prop:zeros}
    Let $g \in C(K, \mathbb{R}^{n})$, with $K \subset \mathbb{R}^{n}$ compact, and assume its zero set
    \begin{equation*}
        Z_g := \{ x \in K \mid g(x) = 0 \}
    \end{equation*}is at most countable.
    Fix $\varepsilon > 0$ and let $f_\theta$  approximate $g$ as in Corollary~\ref{cor:UAT}.
    Then, there exists $0<r\ll1$ such that
    \begin{equation*}Z_{f_\theta} := \{ x \in K \mid f_\theta(x) = 0 \} \cap  B_r(Z_g) = \emptyset,
    \end{equation*}
    where $B_r(Z_g) \coloneqq\{ x \in K \mid \mathrm{dist}(x, Z_g) < r \}$.
\end{proposition}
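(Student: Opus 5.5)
We read the statement in its intended generic sense. For a generic choice of the parameters $\theta$ realizing the approximation of Corollary~\ref{cor:UAT}, the approximating field $f_\theta$ has no zeros in a small neighbourhood of $Z_g$. Taken literally, for every approximating $\theta$, the claim is false: if $g$ has a hyperbolic zero, any $C^1$-close $f_\theta$ has a zero nearby. So the proof will establish density of the good parameters near any approximating parameter, and then fix the radius $r$.

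\textbf{Step 1: countable genericity.} Enumerate $Z_g=\{z_1,z_2,\dots\}$. By Proposition~\ref{prop:submanifold-lower-dim}, each $\zeta_{z_j}$ is a submanifold of dimension $<k$ and is closed, being the zero set of the continuous map $\theta\mapsto f_\theta(z_j)$. Hence each $\zeta_{z_j}$ is Lebesgue-null, and by Lemma~\ref{lem:genericity-x} its complement is open and dense. The set
\[
G:=\bigcap_j (\zeta_{z_j})^c
\]
is a countable intersection of open dense sets. By Baire's theorem it is dense, and its complement $\bigcup_j \zeta_{z_j}$ is Lebesgue-null, so $G$ has full measure.

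\textbf{Step 2: approximation is preserved.} Let $\theta_0$ satisfy $\|g-f_{\theta_0}\|_{C^1(K)}<\varepsilon$, as given by Corollary~\ref{cor:UAT}. The map $\theta\mapsto f_\theta$ is continuous into $C^1(K)$, since $K$ is compact and $\sigma\in C^1$. The approximation inequality is strict, so it is an open condition in $\theta$. By density of $G$ we can therefore pick $\theta\in G$ with $\|g-f_\theta\|_{C^1(K)}<\varepsilon$. For this $\theta$ we have $f_\theta(z)\neq0$ for every $z\in Z_g$.

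\textbf{Step 3: choice of $r$.} This is the main obstacle, because $Z_g$ may be infinite and $\inf_{z\in Z_g}|f_\theta(z)|$ may vanish. There are two cases.

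\begin{itemize}
\item If $Z_g$ is finite, set $c:=\min_j|f_\theta(z_j)|>0$. Uniform continuity of $f_\theta$ on $K$ gives $r>0$ with $|f_\theta(x)-f_\theta(y)|<c$ whenever $|x-y|<r$. Then every $x\in B_r(Z_g)$ satisfies $f_\theta(x)\neq0$, which is the claim.
\item If $Z_g$ is countably infinite, it accumulates somewhere in $K$, say at $z^*$. Since $Z_g$ is closed in compact $K$, we have $z^*\in Z_g$, so $f_\theta(z^*)\neq0$. Continuity then gives $|f_\theta|\ge c>0$ on a neighbourhood of $z^*$.
\end{itemize}

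In the infinite case we complete the argument by compactness. Cover the closed set $Z_g$ by finitely many balls $B_{\rho_z}(z)$ on which $|f_\theta|\ge |f_\theta(z)|/2$. Let $c$ be the minimum of these bounds and $\rho$ a Lebesgue number of the cover. Then $r<\rho$ works, since every point of $B_r(Z_g)$ lies in one of the balls. This compactness step, resting on the closedness of $Z_g$ and the fact that $f_\theta$ is nonzero on all of $Z_g$, is the crux, and it yields $Z_{f_\theta}\cap B_r(Z_g)=\emptyset$.
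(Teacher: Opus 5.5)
Your proof is correct and follows the same route as the paper: a Baire-category argument over the countably many closed null sets $\zeta_z$, $z\in Z_g$, gives a $\theta$ with $f_\theta\neq 0$ on $Z_g$, and a compactness argument on $Z_g$ then produces $r$. Your Step~2 (the strict $C^1$ inequality is open in $\theta$) makes explicit something the paper skips, namely that this generic $\theta$ can still be taken $\varepsilon$-close to $g$, and your Lebesgue-number step avoids the circular dependence between $\nu$ and $\delta$ in the paper's covering argument. Two minor remarks: the finite/infinite split in Step~3 is unnecessary, because $|f_\theta|$ is continuous and nonzero on the compact set $Z_g$, so it attains a positive minimum there and your uniform-continuity argument applies verbatim; and a zero of $f_\theta$ close to a hyperbolic zero of $g$ only shows that $r$ cannot be chosen uniformly in $\theta$, whereas the literal statement actually fails because some approximants vanish exactly at points of $Z_g$.
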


\begin{proof}
    For each $x_i \in Z_g$, let  $\mathfrak{Z}_{x_i} = \left(\zeta_{x_i}\right)^c$ be the open dense set of Lemma~\ref{lem:genericity-x}.
    Consider
    \begin{equation*}
        \mathfrak{Z} = \bigcap_i \mathfrak{Z}_{x_i}.
    \end{equation*}
    Since $Z_g$ is at most countable, $\mathfrak{Z}$ is a countable intersection of open dense sets, so it is dense in $\mathbb{R}^k$.
    % It follows from the fact that $\mathbb{R}^k$ is a Baire space. 
    Therefore, there exists $\theta \in \Theta$ such that $f_\theta(x_i)\neq 0$ for all $x_i \in Z_g$.

    Let us notice that since $Z_g$ is a compact set, $\forall \nu >0$ there exists $\{\tilde x_i\}_{i = 1}^n \subset Z_g$ such that $Z_g \subseteq \cup_{i=1}^n B_\nu(\tilde x_i)$. Moreover, $\forall \alpha >0$ we have $B_\alpha(Z_g)\subseteq \cup_{i=1}^n B_{\nu+\alpha}(\tilde x_i)$. Since $f_\theta(\tilde x_i)\neq 0$, by continuity of $x \mapsto f_\theta(x)$, for each $\tilde x_i$ there exists $\delta_i>0$ such that $f_\theta(x) \neq 0$ for all $x \in B_{\delta_i}(\tilde x_i)$. Setting $\delta = \min_i {\delta_i}$, choosing $r = \delta/2$, we obtain
    \begin{equation*}
        f_\theta(x) \neq 0 \qquad \text{for all } x \in B_r(Z_g).
    \end{equation*}
    The claim follows since $ B_r(Z_g)  \subset \bigcup_{i=1}^n B_{\delta}( \tilde x_i)$.
\end{proof}

Therefore, for a generic choice of parameters, the approximating vector field $f_\theta$ does not vanish at any point of $Z_g$, nor anywhere in a whole neighborhood of it, namely on the set $B_r(Z_g)$. Figure~\ref{fig:display-objects}, image (a) illustrates this configuration: we represent the heteroclinic cycle $\Gamma$, the saddle equilibria $\bar x_i \in Z_g$ and the corresponding balls $B_r(\bar x_i)$.

\begin{figure}
    \centering
    \begin{subfigure}[b]{0.33\textwidth}
    \centering
        \includegraphics[width = \textwidth]{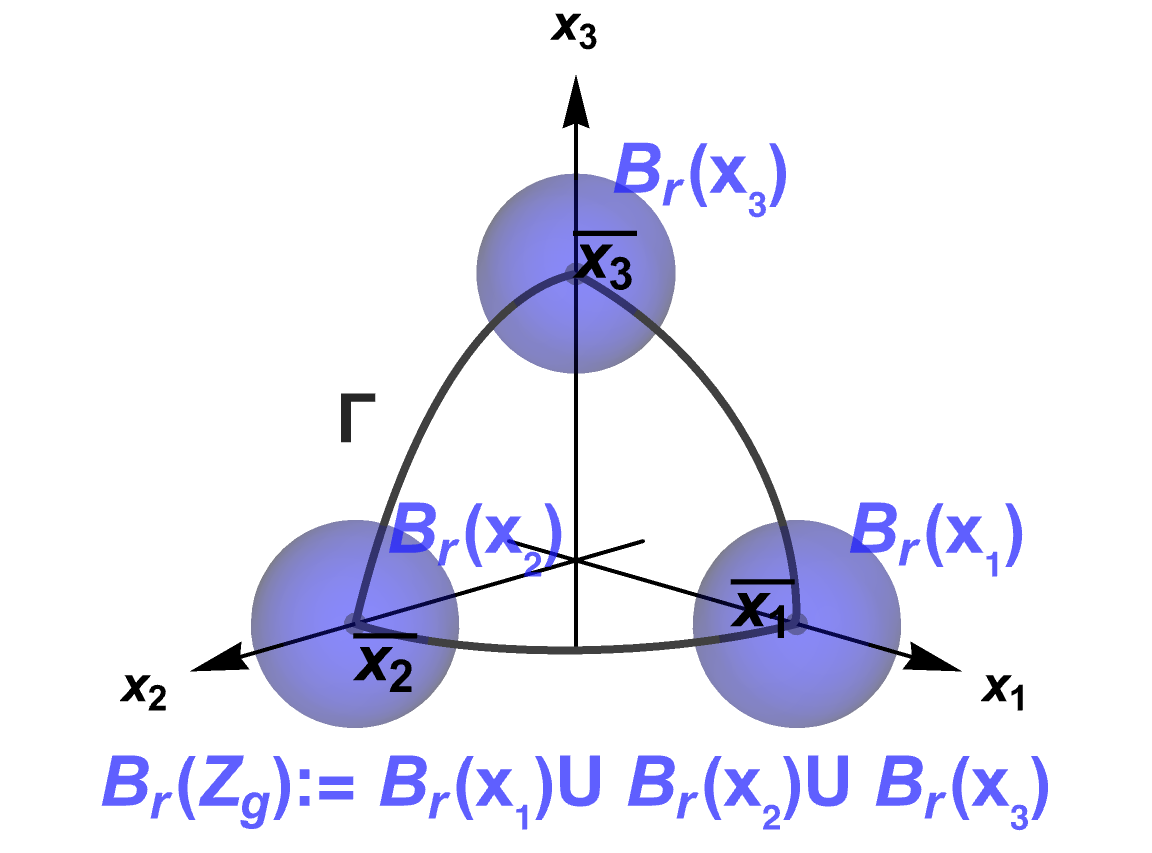}
        \caption{}
    \end{subfigure}
    \begin{subfigure}[b]{0.33\textwidth}
        \centering
        \includegraphics[width = \textwidth]{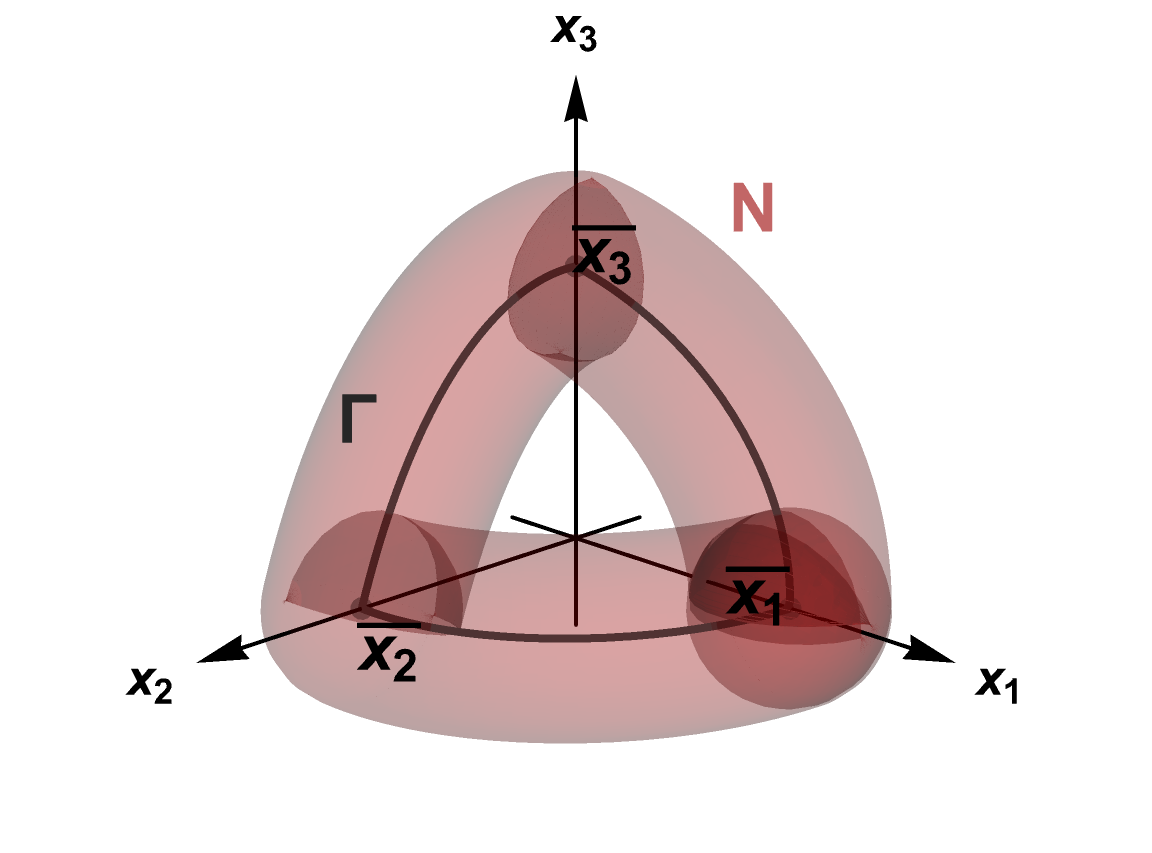}
        \caption{ }
    \end{subfigure}
    \begin{subfigure}[b]{0.3\textwidth}
        \centering
        \includegraphics[width = \textwidth]{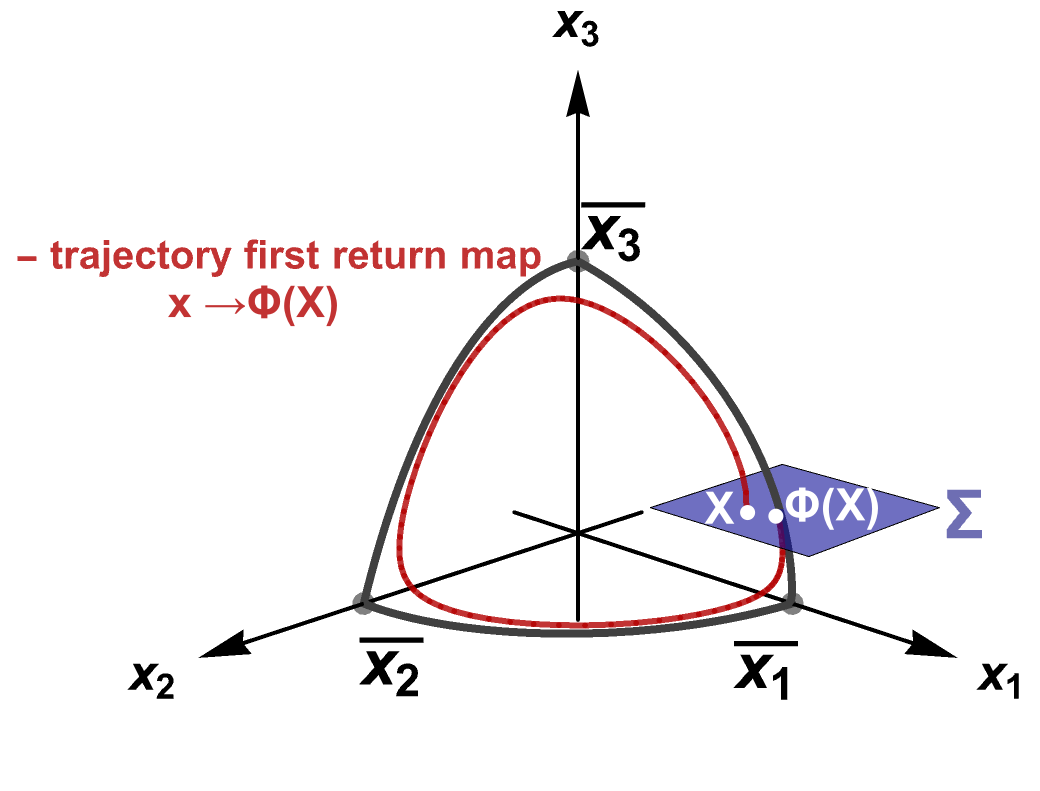}
        \caption{ }
    \end{subfigure}
    \caption{(a) Visualization of the set $B_r(Z_g)$ appearing in Proposition~\ref{prop:zeros}. (b) Illustration of the neighborhood $ \mathcal{N}\coloneqq\{ x \in K : \mathrm{dist}(x,\Gamma) < r \}$ around the heteroclinic cycle $\Gamma$. (c) Display of the transversal section $\Sigma$ used to construct first return maps $\Phi$. }
    \label{fig:display-objects}
\end{figure}

\subsubsection{From heteroclinic cycles to periodic orbits}

In this section, we investigate the dynamics of the approximating vector field $f_\theta$ in a neighborhood of the asymptotically stable heteroclinic cycle $\Gamma$ of $g$. 
It is classical that perturbations of heteroclinic cycles can lead to limit cycles. However, this is not the only possible outcome, since more complex dynamics can also arise (e.g., horseshoe dynamics or strange attractors, see \cite{krupa1997robust}).

In the following we show that the heteroclinic cycle $\Gamma$ of $g$ gives rise to an asymptotically stable periodic orbit for $f_\theta$, provided that $f_\theta$ satisfies an additional assumption. Such assumption can be easily imposed during training. The result is stated for a general vector field $f$ satisfying the hypothesis below.

\begin{theorem}
    \label{theo:periodic-orbit}
    Assume $g$ satisfies the conditions of Theorem~\ref{thm:heteroclinic-stability}, and let $e_i^u$ be the unstable eigenvector of $Dg(\bar x_i)$.
    Then, there exists $\varepsilon > 0$ such that any  $f:\mathbb{R}^n\to \mathbb{R}^n$ for which 
    \begin{equation}
    \label{eq:periodic-assumption}
        \|f-g\|_{C^1(K)} < \varepsilon 
        \quad \text{and} \quad
        \langle f(\bar x_i), e_i^{u} \rangle >0, \quad \text{for all } i=1,\ldots,n,
    \end{equation}
    has an asymptotically stable periodic orbit in a neighborhood of $\Gamma$.
\end{theorem}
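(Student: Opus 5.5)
We will use a Poincaré return map on a cross-section of the cycle, in the spirit of the construction sketched in Figure~\ref{fig:display-objects}(c). Fix a transversal section $\Sigma$ to the connection $\Gamma_Q^+$ near the saddle $\bar x_1$, so that for $g$ the first return map $\Phi_g:\Sigma\to\Sigma$ is well defined near $\Sigma\cap\Gamma$, except on the stable manifold $\mathcal W^s(\bar x_1)\cap\Sigma$. Following Krupa--Melbourne, $\Phi_g$ is the composition of local maps near each saddle and global maps along the connections.

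\emph{Local maps.} Near each $\bar x_i$ we use linearized coordinates and an entry section $H^{in}_i$ transverse to $\Gamma_{i-1}^+$ and an exit section $H^{out}_i$ transverse to $\Gamma_i^+$. For $g$, a point at unstable coordinate $y$ leaves after time $\approx\lambda_u^{i\,-1}\log(h/|y|)$, with stable coordinates contracted to $\mathcal O(|y|^{\nu_i})$. \emph{Global maps.} Along the connections the transition maps are diffeomorphisms. By the invariant subspace hypothesis of Theorem~\ref{thm:heteroclinic-stability}, they send the exit point of $\Gamma_i^+$ to the stable manifold of $\bar x_{i+1}$. The composition then gives $|y_{\text{new}}|\le C|y|^{\nu(\Gamma)}$ for $g$, and $\nu(\Gamma)>1$ makes this a strong contraction.

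\emph{Perturbation.} For $f$ that is $C^1$-close to $g$, the global maps move only by $\mathcal O(\varepsilon)$ in $C^1$, by continuous dependence on compact time intervals. By Proposition~\ref{prop:zeros}, $f$ generically has no zeros near $\bar x_i$; we take this from the hypotheses. Near $\bar x_i$ we have $f(x)=f(\bar x_i)+Dg(\bar x_i)(x-\bar x_i)+o(|x-\bar x_i|)+\mathcal O(\varepsilon|x-\bar x_i|)$, and $\delta_i:=\langle f(\bar x_i),e_i^u\rangle>0$. In the unstable coordinate this gives $\dot y\ge \lambda_u^i y+\delta_i-\text{small}$ on the relevant side of $\mathcal W^s$. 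So every trajectory entering $H^{in}_i$ from the side on which $\Gamma$ is attracting exits through $H^{out}_i$ in bounded time $\lesssim \lambda_u^{i\,-1}\log(1+h\lambda_u^i/\delta_i)$. The return map $\Phi_f$ is therefore defined on a full neighbourhood $U\subset\Sigma$ of $\Sigma\cap\Gamma$, including the former stable manifold, and is continuous, indeed $C^1$.

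\emph{Fixed point.} We show $\Phi_f(U)\subset U$, using $\Phi_g$'s contraction $|y|\mapsto C|y|^{\nu}$ plus an $\mathcal O(\varepsilon)$ shift, for $U$ a small ball of radius $\rho$ with $C\rho^\nu+\mathcal O(\varepsilon)<\rho$. Brouwer then gives a fixed point, hence a periodic orbit near $\Gamma$. For asymptotic stability we need $\|D\Phi_f\|<1$ on $U$. The derivative of the local passage scales like the ratio of stable to unstable rates times powers of the passage time; with the shifted entry coordinate $y+\delta_i/\lambda_u^i$ bounded away from zero, the estimate reduces to the contraction in the stable directions, $e^{-\lambda_s^i T_i}$, against the expansion of the unstable coordinate, $e^{\lambda_u^i T_i}$, composed over the cycle. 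Since $\nu(\Gamma)>1$, choosing $\rho,\varepsilon$ small forces the product to be less than $1$. The contraction mapping theorem then yields a unique, attracting fixed point, which gives an asymptotically stable periodic orbit.

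\emph{Main obstacle.} The hard step is the uniform control of the local passage near each saddle for $f$. The $C^1$-closeness must be combined with the sign condition $\langle f(\bar x_i),e^u_i\rangle>0$ to show two things: that no trajectory of $f$ is trapped near $\bar x_i$ or escapes along the wrong branch of the unstable manifold, and that the resulting Dulac-type map is contracting. I would first try Hartman--Grobman/Shilnikov-type coordinates for $g$ and treat $f-g$ as a small forcing. The positivity of $\delta_i$ is what makes the escape time bounded and breaks the logarithmic singularity.
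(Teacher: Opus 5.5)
You follow essentially the paper's route: a first-return map built from local passages and global transitions, a passage-time bound near each saddle from the sign condition, Brouwer for existence, and closeness to $\Phi_g$ for contraction. The step everything rests on is false, however: the claim that $\Phi_f$ is defined and continuous on a full neighbourhood $U$ of $\Gamma\cap\Sigma$, ``including the former stable manifold''.

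Your tube argument requires $K$ to contain a neighbourhood of $\Gamma$. Since $\bar x_i$ is a hyperbolic zero of $g$ and $\|f-g\|_{C^1(K)}<\varepsilon$, for $\varepsilon$ small the map $x\mapsto x-Dg(\bar x_i)^{-1}f(x)$ contracts a small ball around $\bar x_i$ into itself. So $f$ has a hyperbolic saddle $\bar x_i^f$ with $|\bar x_i^f-\bar x_i|=O(\varepsilon)$, one unstable direction, and a stable manifold $\mathcal W^s(\bar x_i^f)$ locally $C^1$-close to $\mathcal W^s(\bar x_i)$. Proposition~\ref{prop:zeros} does not say otherwise: its radius depends on $f$, and the zero only moves, it does not disappear. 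Hence $\mathcal W^s(\bar x_1^f)\cap\Sigma$ crosses every fixed-size neighbourhood of $\Gamma\cap\Sigma$. Points on it never return, and points on its two sides leave along different branches of $\mathcal W^u(\bar x_1^f)$. The trapping and wrong-branch escape you flag as the ``main obstacle'' therefore actually occur.

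Your escape inequality cannot exclude this. For unit $e_i^u$, $\delta_i=\langle f(\bar x_i),e_i^u\rangle\le|f(\bar x_i)|<\varepsilon$ can be arbitrarily small, while the perturbation error $O(\varepsilon|x-\bar x_i|)$ is of order $\varepsilon h$ on $H_i^{in}$. So $\dot y\ge\lambda_u^i y+\delta_i-\text{small}$ says nothing for $y$ near $0$, and your bound $\log(1+h\lambda_u^i/\delta_i)/\lambda_u^i$ is not uniform in $f$. Also, the forcing in the eigen-coordinate $y$ is the pairing of $f(\bar x_i)$ with the left unstable eigenvector, not with $e_i^u$.

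Restricting $U$ to the side of $\mathcal W^s(\bar x_1^f)\cap\Sigma$ from which orbits follow $\Gamma_1^+$ does not rescue the Brouwer step. Then $\Phi_f(U)\subset U$ requires, for every $i$, that the branch of $\mathcal W^u(\bar x_i^f)$ continuing $\Gamma_i^+$ reach the entry section of $\bar x_{i+1}^f$ on the escape side of $\mathcal W^s(\bar x_{i+1}^f)$. That is a condition on how each connection splits, determined by $f-g$ along all of $\Gamma_i^+$, and the sign of $f$ at the single point $\bar x_{i+1}$ does not control it. For example, add to $g$ a bump of $C^1$-size $\varepsilon$ near the middle of $\Gamma_Q^+$, plus localized terms $\varepsilon' e_j^u$ near each $\bar x_j$ with $0<\varepsilon'\ll\varepsilon$. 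Every $\langle f(\bar x_j),e_j^u\rangle$ stays positive, yet the continuation of $\Gamma_Q^+$ lands on the wrong side of $\mathcal W^s(\bar x_1^f)$ by order $\varepsilon$. Heuristically the return map becomes $y\mapsto C|y|^{\nu(\Gamma)}-c\varepsilon$, which has no fixed point near $0$ and drives orbits across $\mathcal W^s(\bar x_1^f)$.

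The paper's proof has the same weak point. A ball $B_r(\bar x_i)$ on which $\langle f,e_i^u\rangle>\alpha>0$ cannot contain $\bar x_i^f$, so $r<|\bar x_i^f-\bar x_i|=O(\varepsilon)$ depends on $f$ and cannot be the fixed tube radius used for the global maps. Following it will not close the gap. What is missing is an explicit hypothesis on the splitting of every connection: each continuation of $\Gamma_i^+$ must land strictly on the escape side of $\mathcal W^s(\bar x_{i+1}^f)$. With it, you would work on the one-sided region of $\Sigma$ bounded by $\mathcal W^s(\bar x_1^f)$, extend $\Phi_f$ continuously to that boundary, get invariance from the compression, and then prove contraction for the composed return map.
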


\begin{proof}
    The proof relies on the construction of a first return map $\Phi$ defined on a transversal section $\Sigma$ to the cycle $\Gamma$, and on the application of the contraction mapping theorem to $\Phi$.
    We first construct a tubular neighborhood $\mathcal{N}$ of $\Gamma$ defined as
    \begin{equation}
        \label{eq:tubular}
        \mathcal{N} := \{ x \in K : \mathrm{dist}(x,\Gamma) < r \},
    \end{equation}
    where $r>0$ is sufficiently small so that $\mathcal{N} \subseteq K$ (see Figure~\ref{fig:display-objects}, image (b)).
    Up to choosing $\varepsilon$ sufficiently small, we can ensure that the flow of $f$ is well-defined on $\mathcal N$ for all positive times. 
    % Moreover, the flow of $f$ is close to the flow of $g$, so that trajectories  in $\mathcal N_i$ 
    We then define a transversal section $\Sigma$ to $\Gamma$ at some point $p\in\Gamma$, and we construct the first return map $\Phi : \Sigma \to \Sigma$ associated with the flow of $f$ (see Figure~\ref{fig:display-objects}, image (c)).
    
    We need to show that, choosing $r$ sufficiently small, the map $\Phi$ is well-defined and that it maps a neighborhood of $p$ in $\Sigma$ into itself.

    We have that $B_r(\bar x_i)\subset \mathcal N$ for all $i=1,\ldots,n$. It follows that $\mathcal N\setminus \bigcup B_r(\bar x_i)$ is composed of $n$ connected components $\mathcal N_i$, each containing a portion of $\Gamma^+_i$, i.e. the connecting trajectory between $\bar x_i$ and $\bar x_{i+1}$ (recall Section~\ref{sec:heterocline}).
    We let $\Sigma_i^{\text{out}}=\partial \mathcal N_i\cap \overline{B_r(\bar x_{i})}$ and $\Sigma_{i+1}^{\text{in}}=\partial \mathcal N_i\cap \overline{B_r(\bar x_{i+1})}$, with indices modulo $n$.
    By considering $\Sigma=\Sigma_1^{\text{in}}$, we construct the first return map $\Phi$ as the composition of the local maps $\Phi_i^{\text{loc}}: \Sigma_i^{\text{in}} \to \Sigma_i^{\text{out}}$ defined by the flow of $f$ in $B_r(\bar x_i)$, and the global maps $\Phi_i^{\text{glob}}: \Sigma_i^{\text{out}} \to \Sigma_{i+1}^{\text{in}}$ defined by the flow of $f$ in $\mathcal N_i$.
    Let us show that these are well-defined and continuous for all $i=1,\ldots,n$.

    We start by the global maps.
    The stability of the heteroclinic cycle $\Gamma$ for $g$, implies that there is a uniform bound on the time needed for trajectories of $g$ starting at $\Sigma_i^{\text{out}}$ to reach $\Sigma_{i+1}^{\text{in}}$. By choosing $\varepsilon$ sufficiently small, we can ensure that the same property holds for $f$.
    It follows that the map $\Phi^{\text{glob}}_i: \Sigma_i^{\text{out}} \to \Sigma_{i+1}^{\text{in}}$ defined by the flow of $f$ is well-defined and continuous for all $i=1,\ldots,n$.
    
    Let us now focus on the local map $\Phi_i^{\text{loc}}: \Sigma_i^{\text{in}} \to \Sigma_i^{\text{out}}$ defined by the flow of $f$ in a neighborhood of $\bar x_i$.
    In the case $n\ge 4$, by assumption there exist $r>0$ and $\alpha>0$ such that $\langle f(x), e_i^u \rangle > \alpha$ for all $x \in B_r(\bar x_i)$ and $i=1,\ldots,n$. Then, letting $x(t)$ be a trajectory of $f$ starting at $\Sigma_i^{\text{in}}$, we have that
    \[
        \frac{d}{dt} \langle x(t)-\bar x_i, e_i^u \rangle >\alpha.
    \]
    Since $\langle x(0)-\bar x_i, e_i^u \rangle \ge -r$ and $x(t)\notin B_r(\bar x_i)$ if $\langle x(t)-\bar x_i, e_i^u \rangle \ge r$, it follows that the exit time $T(x)$ of $x(t)$ from $B_r(\bar x_i)$ is uniformly bounded by $2r/\alpha$. This implies that $\Phi_i^{\text{loc}}$ is well-defined and continuous for all $i=1,\ldots,n$.

    It follows that the first return map $\Phi$ can be defined as the composition of the local and global maps. The Brower fixed point theorem implies that $\Phi$ has a fixed point, which corresponds to a periodic orbit of $f$. Finally, consider the first return map associated with $g$, extended along the heterocline $\Gamma$ so that points on the cycle are mapped into points on the cycle. It is easily checked that $\Phi$ is $C^1$-close to this map, which is a contraction by the stability of $\Gamma$. Therefore, for sufficiently small $\varepsilon$, $\Phi$ is a contraction as well, and the periodic orbit is asymptotically stable.
\end{proof}

We now show that, in dimension $n=3$, the second part of assumption~\eqref{eq:periodic-assumption}, namely the positivity condition, can be removed under a structural condition on the vector field. Recall that a vector field $F$ is \emph{competitive} if $\partial_{x_j} F_i(x)<0$ for all $i\neq j$ and $x$, and \emph{cooperative} if $\partial_{x_j} F_i(x)>0$ for all $i\neq j$ and $x$. These are classical examples of monotone vector fields. 
Since monotone systems in $\mathbb{R}^3$ have essentially two-dimensional dynamics and satisfy a Poincaré--Bendixson-type theorem \cite[Theorem~4.1]{smith1995monotone}, the conclusion of Theorem~\ref{theo:periodic-orbit} holds without the additional assumption on $f$.

\begin{theorem}
    \label{theo:periodic-orbit3}
    Let $n=3$ and assume that $g$ satisfies the conditions of Theorem~\ref{thm:heteroclinic-stability} and is competitive or cooperative.
    Let $f$ be a vector field such that $f(\bar x_i)\neq 0$. 
    Then, there exists $\varepsilon>0$ such that if $\|f-g\|_{C^1(K)}<\varepsilon$ then the dynamics $\dot x=f(x)$ has an asymptotically stable periodic orbit in a neighborhood of $\Gamma$.
\end{theorem}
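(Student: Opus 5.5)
The approach is to reduce to the argument of Theorem~\ref{theo:periodic-orbit}. Its only use of the positivity condition $\langle f(\bar x_i),e_i^u\rangle>0$ is to bound the passage time through $B_r(\bar x_i)$, i.e.\ to make the local maps $\Phi_i^{\text{loc}}$ well defined. In dimension three this bound will instead come from monotone-systems theory. First I will fix the construction of Theorem~\ref{theo:periodic-orbit}: the tubular neighborhood $\mathcal N$ of~\eqref{eq:tubular}, the balls $B_r(\bar x_i)$, the sections $\Sigma_i^{\text{in}},\Sigma_i^{\text{out}}$, and the global maps $\Phi_i^{\text{glob}}$. The global maps are well defined and continuous for $\varepsilon$ small, exactly as before. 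I will also use that $\Gamma$ is asymptotically stable for $g$, so that $\mathcal N$ (possibly shrunk to a smaller forward-invariant neighborhood $V$) is forward invariant for $f$ once $\varepsilon$ is small. This is obtained by $C^1$-closeness on the compact boundary of a trapping region, where $g$ points strictly inward.

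Second, I will use monotonicity. Competitiveness or cooperativity of $g$ means strict sign conditions on the off-diagonal entries of $Dg$. By compactness these hold with a uniform margin on $\overline{\mathcal N}$. Hence any $f$ that is $C^1$-$\varepsilon$-close to $g$ is also competitive or cooperative on $\mathcal N$. In $\mathbb R^3$, \cite[Theorem~4.1]{smith1995monotone} then gives that every compact omega-limit set of $f$ containing no equilibrium is a closed orbit.

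Third, I will show that $f$ has no equilibria in $\overline{\mathcal N}$. Near the $\bar x_i$ this follows from $f(\bar x_i)\neq0$ together with $C^1$-closeness. Since $Dg(\bar x_i)$ is invertible (hyperbolicity), $f$ has at most one zero in $B_r(\bar x_i)$ for small $\varepsilon$. By the implicit function theorem such a zero is a point $x_f$ with $|x_f-\bar x_i|\lesssim|f(\bar x_i)|$. \textbf{This is the main obstacle.} The statement gives only $f(\bar x_i)\neq 0$, which does not prevent a hyperbolic saddle of $f$ from surviving near $\bar x_i$. The argument therefore has to be read as applying to the generic situation of Proposition~\ref{prop:zeros}: $f$ is nonvanishing on $B_r(Z_g)$, with $r$ independent of $\varepsilon$. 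I will adopt this as the meaning of the hypothesis, choosing $r$ before $\varepsilon$ to be consistent with that proposition. Away from the balls, $|g|$ is bounded below on $\overline{\mathcal N}\setminus\bigcup_i B_r(\bar x_i)$, so $f\neq0$ there for $\varepsilon$ small. Then for any $x_0\in V$ the omega-limit set lies in $\overline V\subset\mathcal N$, is compact, and contains no equilibria, so it is a periodic orbit contained in a neighborhood of $\Gamma$. This proves existence.

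Finally, for asymptotic stability I will argue as at the end of the proof of Theorem~\ref{theo:periodic-orbit}. Because no trajectory can stall, the periodic orbit obtained above crosses $\Sigma=\Sigma_1^{\text{in}}$ transversally. Since $V$ is forward invariant and equilibrium free, the Poincaré--Bendixson property prevents trajectories from remaining forever in $B_r(\bar x_i)$. A compactness argument then gives a uniform exit time, so the first return map $\Phi:\Sigma\to\Sigma$ is defined near $\Sigma\cap V$. I then compare $\Phi$ with the return map of $g$ extended along $\Gamma$, which is contracting by $\nu(\Gamma)>1$ and Theorem~\ref{thm:heteroclinic-stability}, and conclude that $\Phi$ is a contraction. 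Its unique fixed point is the periodic orbit, which is therefore asymptotically stable. As a fallback, if the $C^1$ comparison with the singular return map is too delicate, I would use the fact that in three-dimensional competitive systems a closed orbit in a trapping annular region that attracts all of that region is orbitally asymptotically stable.
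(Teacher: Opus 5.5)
\textbf{There is a genuine gap, and the paper's own proof has the same one.}

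Your existence argument follows the paper's proof step for step: a trapping tube around $\Gamma$, $C^1$-persistence of competitiveness or cooperativity, absence of equilibria in the tube, then Smith's three-dimensional Poincar\'e--Bendixson theorem. You have identified the right weak step, but your repair cannot work.

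You note that $f$ has \emph{at most} one zero in $B_r(\bar x_i)$; in fact it has \emph{exactly} one. Since $Dg(\bar x_i)$ is invertible, for $r$ small and $\|f-g\|_{C^1(K)}<\varepsilon(r)$ the map $x\mapsto x-Dg(\bar x_i)^{-1}f(x)$ is a contraction of $\overline{B_r(\bar x_i)}$ into itself. Equivalently, $\deg(f,B_r(\bar x_i),0)=\operatorname{sign}\det Dg(\bar x_i)\neq 0$. So $f$ has a zero $x_f^i$ with $|x_f^i-\bar x_i|=O(|f(\bar x_i)|)$, and it is a hyperbolic saddle with one unstable and two stable directions. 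Hyperbolic equilibria persist; the condition $f(\bar x_i)\neq 0$ only says that the saddle has moved.

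Consequently, the hypothesis you adopt ($f\neq 0$ on $B_r(Z_g)$ with $r$ fixed before $\varepsilon$) holds for \emph{no} $f$ once $\varepsilon<\varepsilon(r)$. The theorem you would prove is therefore vacuous. Proposition~\ref{prop:zeros} does not supply such an $r$: its radius depends on $\theta$ and is necessarily $O(\|f_\theta-g\|)$. If instead $r$ is chosen after $f$, below $|x_f^i-\bar x_i|$, you lose the forward invariance of the tube, which required $\varepsilon$ small compared with $r$. The paper's proof makes exactly this move (``up to restricting the radius $r$ of $\mathcal N$ \dots''), so it has the same gap.

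Everything downstream inherits the problem. Each $x_f^i$ lies in $V$ together with a piece of its two-dimensional stable manifold, and points on $\mathcal W^s(x_f^i)$ have $\omega$-limit set $\{x_f^i\}$. As a result:
\begin{itemize}
\item Smith's theorem cannot be applied on all of $V$.
\item Passage times through $B_r(\bar x_i)$ are unbounded near $\mathcal W^s(x_f^i)$.
\item The first return map is undefined where $\Sigma$ meets these stable manifolds.
\item Neither your contraction argument nor your fallback, which presupposes an equilibrium-free trapping region, applies.
\end{itemize}

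The hypothesis $f(\bar x_i)\neq0$ alone cannot be rescued either. Take the Lotka--Volterra target of Section~\ref{sec:target}, to which the paper applies this theorem, and set $f^\delta(x):=(1+\delta)\,g\bigl(x/(1+\delta)\bigr)$, i.e.\ $a_i\mapsto(1+\delta)a_i$. Then $\|f^\delta-g\|_{C^1(K)}=O(\delta)$ and $f^\delta(\bar x_i)=\frac{\delta a_i}{1+\delta}e_i\neq 0$. However, $f^\delta$ is conjugate to $g$ by a dilation. It therefore has the asymptotically stable heteroclinic cycle $(1+\delta)\Gamma$ and no periodic orbit in a fixed small tube around $\Gamma$.

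What is missing is a condition on \emph{where} the persistent saddles move, giving a forward-invariant region near $\Gamma$ that excludes them. For example, for $f=g+\mu\mathbf 1$ with $\mu>0$, the $(i{+}1)$-st coordinate of $x_f^i$ is approximately $-\mu/\lambda_u^i<0$. So $x_f^i$ lies outside the closed positive orthant, which is forward invariant for $f$. Your Smith-type argument then goes through on $V\cap\mathbb R^3_{\ge 0}$, at least for existence.
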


\begin{proof}
    
    Consider the tubular neighborhood $\mathcal{N}$ of $\Gamma$ defined as in~\eqref{eq:tubular}.
    For sufficiently small $\varepsilon$, the flow of $f$ is well-defined on $\mathcal N$ for all positive times.
    It follows that the $\omega$-limit set of any trajectory in $\mathcal N$ is non-empty, compact, invariant, and contained in $\mathcal N$. 
    Moreover, it is close to the $\omega$-limit set of the flow of $g$, which is $\Gamma$. 
    Finally, up to restricting the radius $r$ of $\mathcal N$, we can ensure that $f$ has no equilibrium points in $\mathcal N$, since $f(\bar x_i)\neq 0$ for all $i=1,2,3$.

    To conclude, observe that the condition $\|f-g\|_{C^1(K)}<\varepsilon$ implies that $f$ is competitive or cooperative, as $g$, if $\varepsilon$ is sufficiently small.Since $f$ has no equilibrium points in $\mathcal N$, by \cite[Theorem~4.1]{smith1995monotone}, the only possible $\omega$-limit sets in $\mathcal N$ is a periodic orbit. 
\end{proof}

It is worth noting that the periodic orbit obtained in the above theorems closely follows the heteroclinic cycle $\Gamma$. Indeed, these trajectories alternate between slow passages near the  $\bar{x}_i$ and fast transitions along the connections $\Gamma_i^+$. More precisely, linearizing the dynamics in a neighborhood of each $\bar{x}_i$ (as in \cite{krupa1995asymptotic, rabinovich2008transient}) shows that the time spent near $\bar{x}_i$ can be estimated in terms of the corresponding unstable eigenvalue, up to a multiplicative constant depending on the size of the neighborhood and $\varepsilon$. As a result, the period of the periodic orbit can then be approximated as the sum of these residence times and the transition times along the connections, thereby reflecting the underlying heteroclinic dynamics.

\section{Case study: focused attention meditation}
\label{sec:FAM}
We illustrate here the application of the proposed model to the cognitive processes underlying Focused Attention Meditation (FAM).

\subsection{FAM cognitive processes and associated brain networks}
FAM is a common contemplative practice in which the meditator deliberately directs attention toward a specific object, such as the breath or a visual stimulus, while attempting to resist both internal distractions (e.g., thoughts) and external stimuli (e.g., sounds).

Based on neuroscientific studies \cite{hasenkamp2012effects, hasenkamp2013using, ricard2014mind, ganesan2022focused}, we describe FAM as an alternation between three main cognitive phases (see Figure~\ref{fig:meditation}, image (a)):
\begin{enumerate}
    \item \emph{Sustained attention}, when the practitioner successfully maintains focus on the meditation object;
    \item \emph{Mind wandering}, when attention drifts away from the intended object;
    \item \emph{Awareness of distraction}, when the practitioner realizes the loss of focus and redirects attention back to the meditation object.
\end{enumerate}
Each phase has been associated with a distinct large-scale brain network \cite{hasenkamp2012mind, ricard2014mind, ganesan2022focused} (see Figure~\ref{fig:meditation}, image (b)):
\begin{itemize}
    \item \emph{Control Network} (CN), active during sustained focus;
    \item \emph{Default Mode Network} (DMN), engaged during mind wandering;
    \item \emph{Salience Network} (SN), recruited when detecting distractions.
\end{itemize}

\begin{remark}
    Although FAM involves a fourth cognitive phase, namely the shifting or reorienting of attention that follows the awareness of distraction, this phase does not correspond to an additional large-scale brain network. Empirical findings indicate that attentional shifting is primarily implemented by the CN, often in cooperation with the SN \cite{hasenkamp2012mind, ricard2014mind, ganesan2022focused}. For this reason, we adopt a three-phase formulation, which captures the relevant network dynamics while remaining empirically justified.
\end{remark}

These phases repeat sequentially over time, generating cyclical dynamics. Figure~\ref{fig:meditation} illustrates this cycle: image (a) shows the different cognitive phases, while image (b) displays the corresponding brain networks.

\subsection{Target dynamics}
\label{sec:target}
To formalize the above dynamics, we introduce a three dimensional competitive system $g$ whose heteroclinic cycle captures the transitions between sustained attention (CN), mind wandering (DMN), and awareness of distraction (SN).

Specifically, we consider a  system in $\mathbb{R}^3_{\ge 0} $ with variables
\begin{equation*}
    x_1, \; x_2, \; x_3,
\end{equation*}
representing the activity of the CN, DMN, and SN respectively.

To construct a flow with the desired heteroclinic cycle, we consider a competitive Lotka--Volterra system of the form
\begin{equation}
\label{eq:LV}
\dot{x} = g(x), \qquad  g_i(x) = x_i \left( 1 - \sum_{i=1}^3 \rho_{ij}x_j \right), \quad j=1,2,3,
\end{equation}
where the coefficients $\rho_{ij}$ are chosen as
\begin{equation}
\label{eq:rho}
\rho_{ij} =
\begin{cases}
\tfrac{1}{a_i}, & i = j, \\
\tfrac{1-\lambda_u^{(j)}}{a_j}, & i = j+1, \\
\tfrac{2}{a_j}, & i = j+2,
\end{cases}
\qquad \text{indices modulo $3$.}
\end{equation}
With this choice, the system admits three equilibria located on the coordinate axes,
\begin{equation*}
\bar{x}_i = a_i e_i, \qquad i=1,2,3,
\end{equation*}
where $e_i$ denotes the canonical basis of $\mathbb{R}^3$. The linearization at $\bar{x}_i$ has one unstable eigenvalue $\lambda_u^{i}$ in the direction of the next node $\bar x_{i+1}$ in the cycle  and two stable eigenvalues equal to $-1$. The condition
\[
0 < \lambda_u^{i} < 1
\]
ensures that the system is competitive, being all coefficients $\rho_{ij}$ positive, and ensures the asymptotic stability of the heteroclinic cycle. We refer to \cite{horchler2015designing} for further details. 

Recall that the time spent in a neighborhood of a given equilibrium models the time spent in the corresponding cognitive phase, that is, into the activation time of the associated network (recall Section~\ref{sec:heterocline}).
The unstable eigenvalue determines such residence time near each saddle, see, e.g.~\cite{rabinovich2008transient}. If $t_i$ denotes the time spent near the equilibrium $\bar{x}_i$, then 
\begin{equation*}
    t_i \sim \frac{1}{\lambda_u^{i}} .
\end{equation*}
It follows that by tuning $\lambda_u^i$ we can control the relative duration of each cognitive phase. Figure~\ref{fig:traj2} shows this effect, illustrating the trajectories $x_i(t)$ obtained from equation~\eqref{eq:LV} with initial condition close to one of the saddle points. In particular, equal unstable eigenvalues at each saddle produce cycles with approximately equal residence times in each phase (Figure~\ref{fig:traj2}, image (a)), whereas adjusting a single unstable eigenvalue, e.g.  $\lambda_u^{1}$, extends or shortens the corresponding phase (Figure~\ref{fig:traj2}, image (b)).

\begin{figure}[tb]
    \centering
    \begin{subfigure}[b]{0.45\textwidth}
    \centering
        \includegraphics[width = \textwidth]{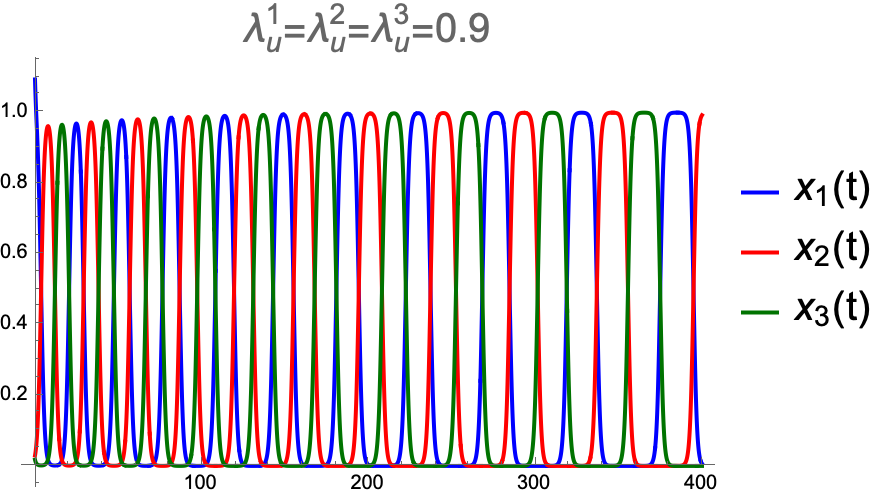}
        \caption{}
    \end{subfigure}
    \hspace{1cm}
    \begin{subfigure}[b]{0.45\textwidth}
    \centering
        \includegraphics[width = \textwidth]{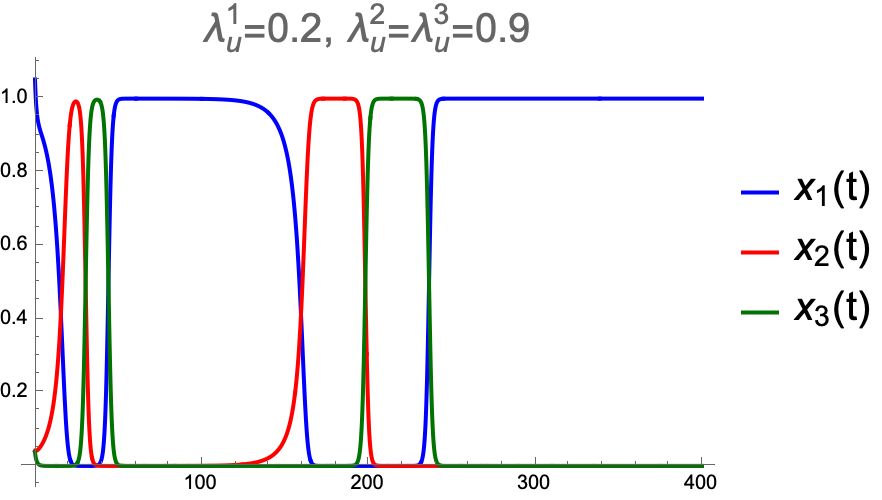}
        \caption{ }
    \end{subfigure}
    \caption{Trajectories of the system described by~\eqref{eq:LV}, with parameters $a_i = 1$ for $i=1,2,3$ and varying unstable eigenvalues across images.
        (a) All unstable eigenvalues are equal, $\lambda_u^i = 0.9$, resulting in approximately equal residence times in the three states during each cycle.  (b) The unstable eigenvalue of the first state is reduced to $\lambda_u^1 = 0.2$, while the other two are set to $\lambda_u^i= 0.9$ for $i= 2, 3$. Within each cycle, the residence time around the state $\bar x_1$ is longer than the others.}
    \label{fig:traj2}
\end{figure}

Figure~\ref{fig:traj2} shows that residence times tend to increase over successive cycles, which complicates the modeling of uniform cyclic dynamics. When the time spent near a saddle point grows from one cycle to the next, it becomes difficult to describe the system's behavior in terms of regular periods. Furthermore, systems of the form~\eqref{eq:LV}(see \cite{afraimovich2004origin, rabinovich2001dynamical}), do not provide a straightforward representation of neural activity.

To overcome these limitations, in the next section we implement the approximating dynamics introduced in Section~\ref{sec:UATprocedure}, which generates the desired oscillatory behavior within a neurally interpretable framework.

\subsection{Approximation of the target dynamics}
We rely on the Universal Approximation Theorem~\ref{theo:UAT}, and its variant proposed in Corollary~\ref{cor:UAT}, to determine a function $f_\theta$ that can approximate the target dynamics $g$  over any compact domain of interest.

We consider $x \in \mathbb{R}^3$, and we focus on the compact cube $$K = [0,1]^3 \subset \mathbb{R}^3,$$
which contains the heteroclinic cycle of $g$. The approximating function is defined as
\begin{equation*}
    f_\theta(x) := -x + P \, \sigma(W x + b), \qquad \theta = (P, W, b), \qquad P \in \mathbb{R}^{3 \times N}, \, W \in \mathbb{R}^{N \times 3}, \, b \in \mathbb{R}^N,
\end{equation*}
where $\sigma$ is a sigmoidal activation function satisfying the assumptions of Section~\ref{ssec:MFeq}, and $N \gg 3$.
Thanks to Corollary~\ref{prop:blockP}, the matrix $P$ can be choosen block-diagonal without affecting the model's approximation ability.

The goal is to determine parameters $\theta$ such that $ f_\theta$ approximates $g$ on $K$;
so, we implement the function $f_\theta$ as a feedforward neural network with a single nonlinear hidden layer.
The construction and training of the network are performed in \emph{Wolfram Mathematica} using the \texttt{NetGraph} framework. The procedure is summarized in Algorithm~\ref{alg:approx}.

\begin{algorithm}[h!t]
    \caption{Approximation of the target dynamics}
    \label{alg:approx}
    \begin{algorithmic}[1]

        \State Define the target field: implement the vector field $g$ of Sect.~\ref{sec:target} over $K$.

        \State Generate training data: fix $D \in \mathbb N$, sample points $S=\{x_i\}_{i=1}^D\subset K$ and compute targets $y_i = g(x_i)$.

        \State Construct the neural architecture: fix $N\gg 3$ and define a \texttt{NetGraph} implementing the map $f_\theta(x) = -x + P\,\sigma(Wx+b)$, with parameters $\theta = (P,W,b)$.

        \State Initialize the network: use \texttt{NetInitialize} to generate initial values for $P$, $W$, and $b$.

        \State Train the parameters: use \texttt{NetTrain} to minimize the mean squared loss
        \[
            \mathcal{L}(\theta)
            = \frac{1}{|S|}\sum_{i} \| f_\theta(x_i) - y_i \|^2.
        \]

        \State Return the trained dynamics: the resulting system is $\dot{x} = f_\theta(x)$.

    \end{algorithmic}
\end{algorithm}

\subsubsection{Simulation results}
\label{sec:simulation}
We now present the numerical results obtained by applying Algorithm~\ref{alg:approx}, and we compare them with the theoretical predictions of Section~\ref{sec:sequential-periodic}.

We trained the network $f_\theta$ with $N=45$ hidden neurons on a dataset of $10^6$ samples uniformly distributed in $K$, paired with the corresponding values of the target field $g$.

The results are summarized in Figure~\ref{fig:results1}. Each row corresponds to a different configuration of unstable eigenvalues for the target dynamics $g$: the top row shows the symmetric case $\lambda_u^1=\lambda_u^2=\lambda_u^3$, while the bottom row illustrates an asymmetric configuration in which one eigenvalue differs from the others.
Each row is organized into three columns. Panels (a,d) display trajectories of the target system $\dot x = g(x)$, with the first return point $A$ of coordinates $(t_A, x_1(t_A))$ marked. Panels (b,e) show trajectories of the learned system $\dot x = f_\theta(x)$, together with two consecutive return points $B$ and $C$ of coordinates respectively $(t_B, x_1(t_B))$ and $(t_C, x_1(t_C))$. Panels (c,f) provide zoomed-in views of these points, whose coordinates are used to estimate the first return time $T_g = t_A$ and the period $T_{f_\theta} = t_C-t_B$.

\begin{figure}[tb]
    \centering
    \begin{subfigure}[b]{0.33\textwidth}
    \centering
        \includegraphics[width = \textwidth]{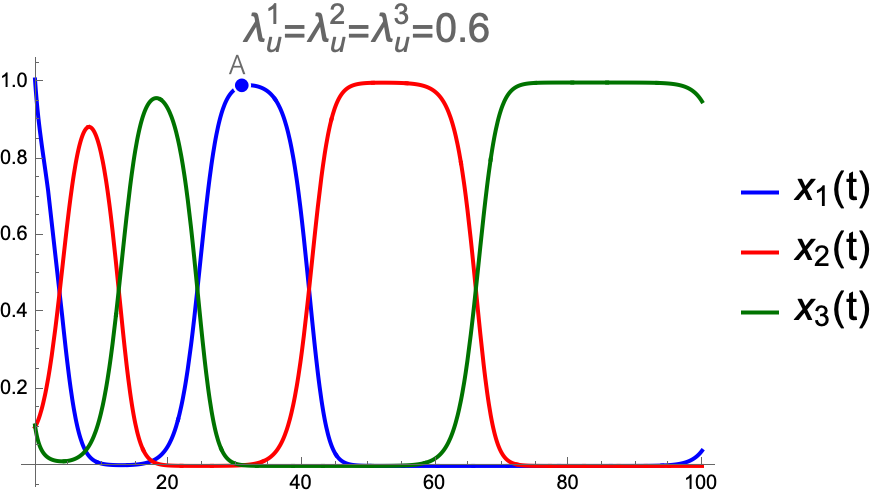}
        \caption{ Target dynamics }
    \end{subfigure}
    \begin{subfigure}[b]{0.3\textwidth}
    \centering
        \includegraphics[width = \textwidth]{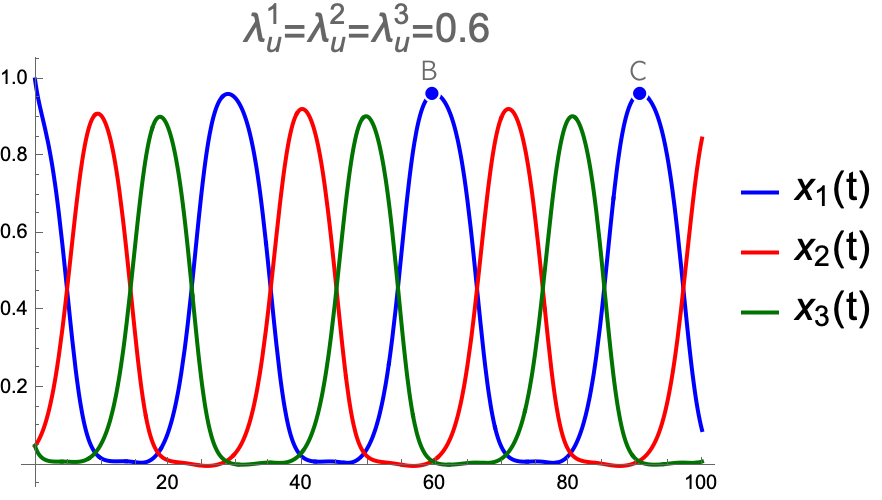}
        \caption{ Learned dynamics }
    \end{subfigure}
    \begin{subfigure}[b]{0.33\textwidth}
    \centering
        \includegraphics[width = \textwidth]{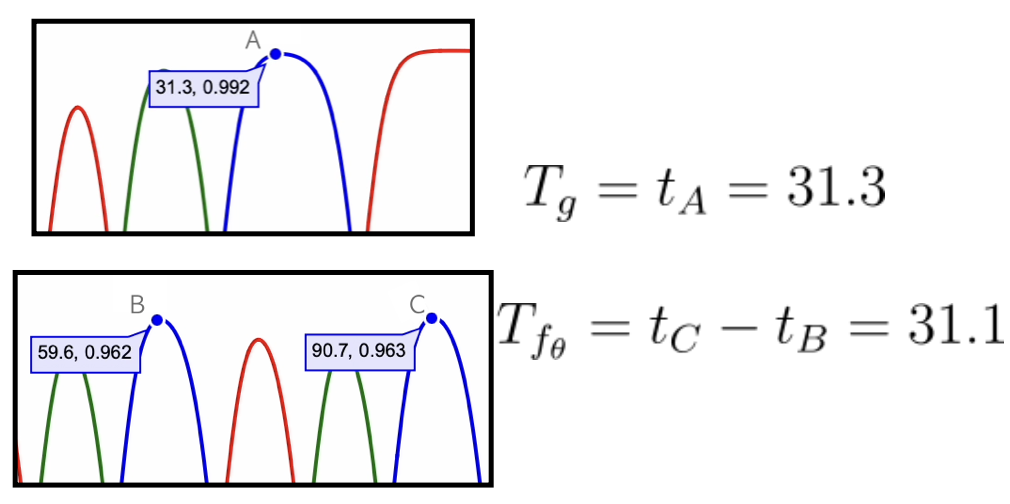}
        \caption{ Return times }
    \end{subfigure}

    \begin{subfigure}[b]{0.33\textwidth}
    \centering
        \includegraphics[width = \textwidth]{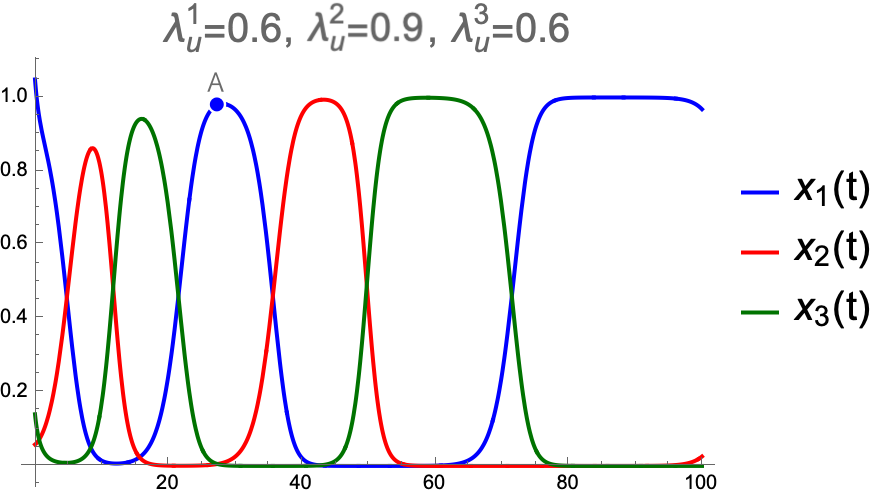}
        \caption{ Target dynamics }
    \end{subfigure}
    \begin{subfigure}[b]{0.33\textwidth}
    \centering
        \includegraphics[width = \textwidth]{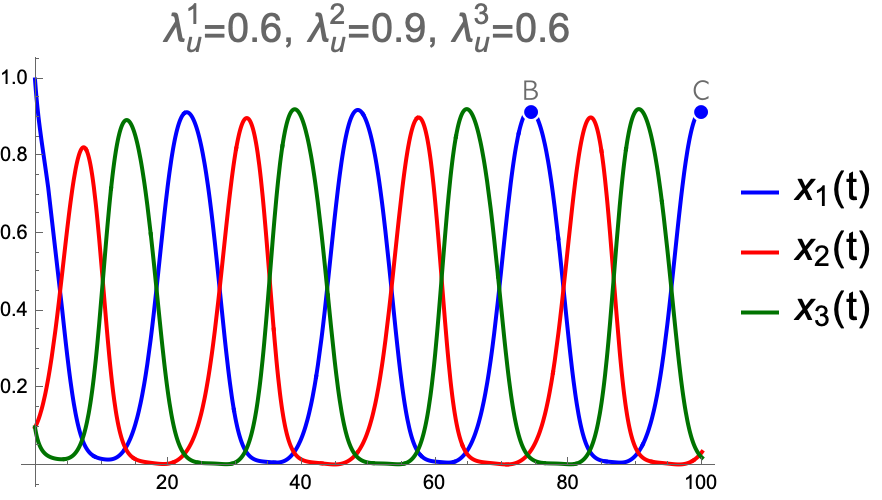}
        \caption{ Learned dynamics }
    \end{subfigure}
    \begin{subfigure}[b]{0.3\textwidth}
    \centering
        \includegraphics[width = \textwidth]{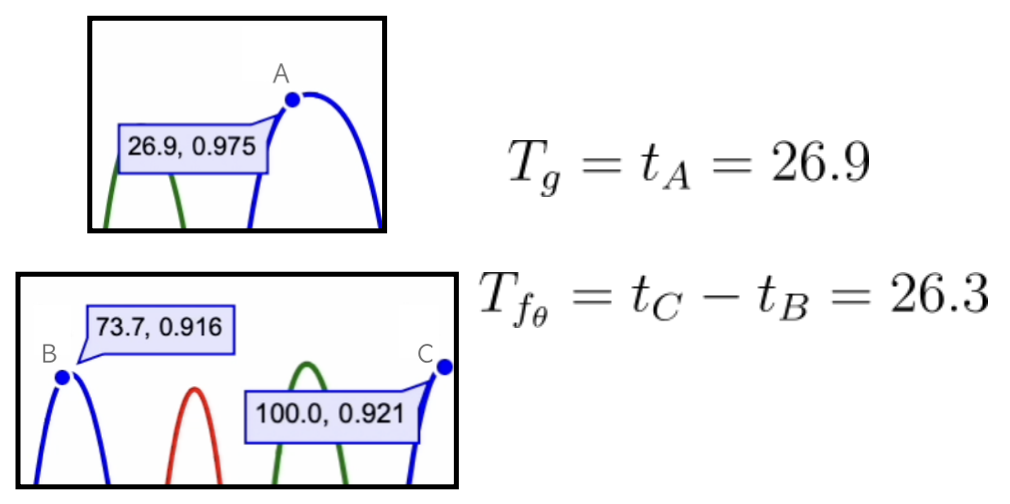}
        \caption{ Return times}
    \end{subfigure}

    \caption{Comparison between the target dynamics $\dot x = g(x)$ and the learned dynamics $\dot x = f_\theta(x)$. Top row: symmetric case $\lambda_u^1=\lambda_u^2=\lambda_u^3=0.6$. 
    Bottom row: asymmetric case $\lambda_u^2=0.9$, $\lambda_u^1=\lambda_u^3=0.6$. 
    (a,d) Trajectories of the target system with first return point $A$. 
    (b,e) Trajectories of the learned system with consecutive return points $B$ and $C$ on the periodic orbit. 
    (c,f) Zoomed-in views used to estimate the return time $T_g$ and the period $T_{f_\theta}$.
    }
    \label{fig:results1}
\end{figure}

The simulations confirm that the learned vector field $f_\theta$ reproduces the dynamical properties described in Section~\ref{sec:sequential-periodic}. Trajectories starting near a saddle of $g$ converge to a periodic orbit, whose period $T_{f_\theta}$ can be estimated in terms of the first return time $T_g$ of the heteroclinic cycle. 
In the symmetric case, the three phases have comparable durations, whereas in the asymmetric case, increasing a single unstable eigenvalue reduces the residence time near the corresponding saddle, resulting in a shorter overall period. 
It is worth noting that the competitive structure of $g$ allows us to recover the periodic behavior without imposing additional constraints during training, in accordance with Theorem~\ref{theo:periodic-orbit3}.

\subsection{Neural interpretation of the approximated dynamics}
Having obtained an approximation $f_\theta$ of the target dynamics $g$ and validated its behavior numerically, we now examine how $f_\theta$ can be interpreted within a neural framework.

As shown in Section~\ref{sec:interpretation}, any learned dynamics $\dot x = f_\theta(x)$ with $x \in \mathbb{R}^3$ can be associated with a high-dimensional mean-field system in $\mathbb{R}^N$ defined by the parameters $\theta = (P, W, b)$ as
\begin{equation*}
    \dot y = -y + \sigma(WP y + b), \qquad y \in \mathbb{R}^{N}.
\end{equation*}
In this lifted system, the variable $y$ represents the activity of $N$ interacting populations.

In our case study, the projection matrix $P$ is block-diagonal and, from  equation~\eqref{eq:projection-dynamics}, it  assigns each population to one of the three large-scale functional networks (CN, DMN, and SN). If we assume that each row of $P$ contains exactly $N/3$ nonzero entries, each block of the connectivity matrix $WP$ has size $N/3 \times N/3$. Thus, this block structure induces a corresponding block decomposition of $WP$. Here each block captures the influence of one network on another, thereby providing a neural interpretation of the approximated dynamics.

\begin{figure}[tb]
    \begin{subfigure}[b]{0.36\textwidth}
    \centering
        \includegraphics[width = \textwidth]{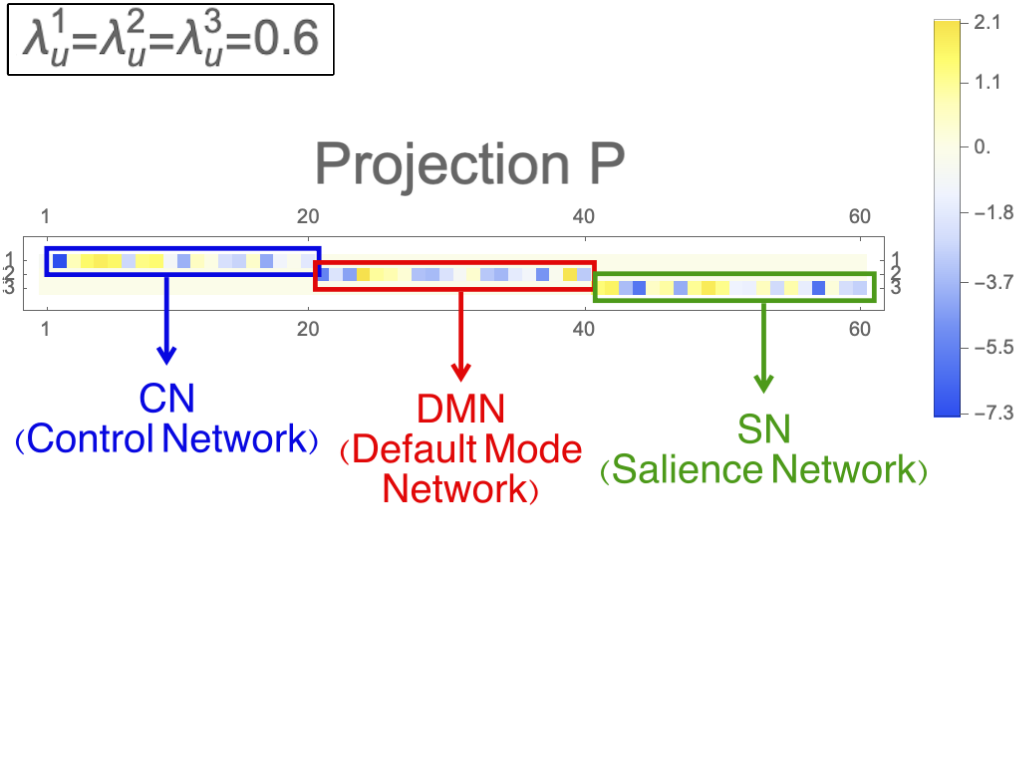}
        \caption{}
    \end{subfigure}
    \begin{subfigure}[b]{0.3\textwidth}
        \centering
        \includegraphics[width = \textwidth]{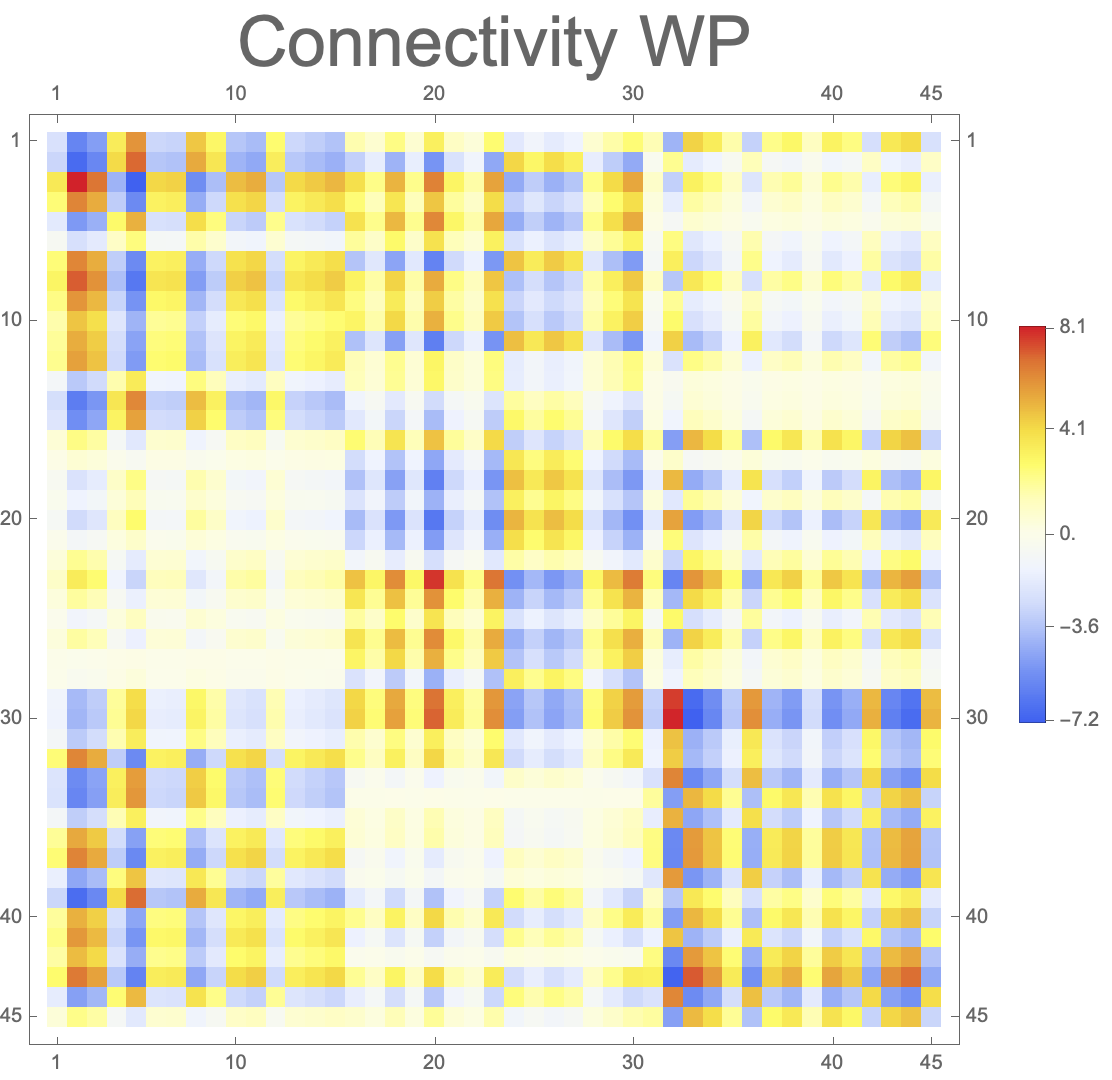}
        \caption{ }
    \end{subfigure}
    \begin{subfigure}[b]{0.3\textwidth}
        \centering
        \includegraphics[width = \textwidth]{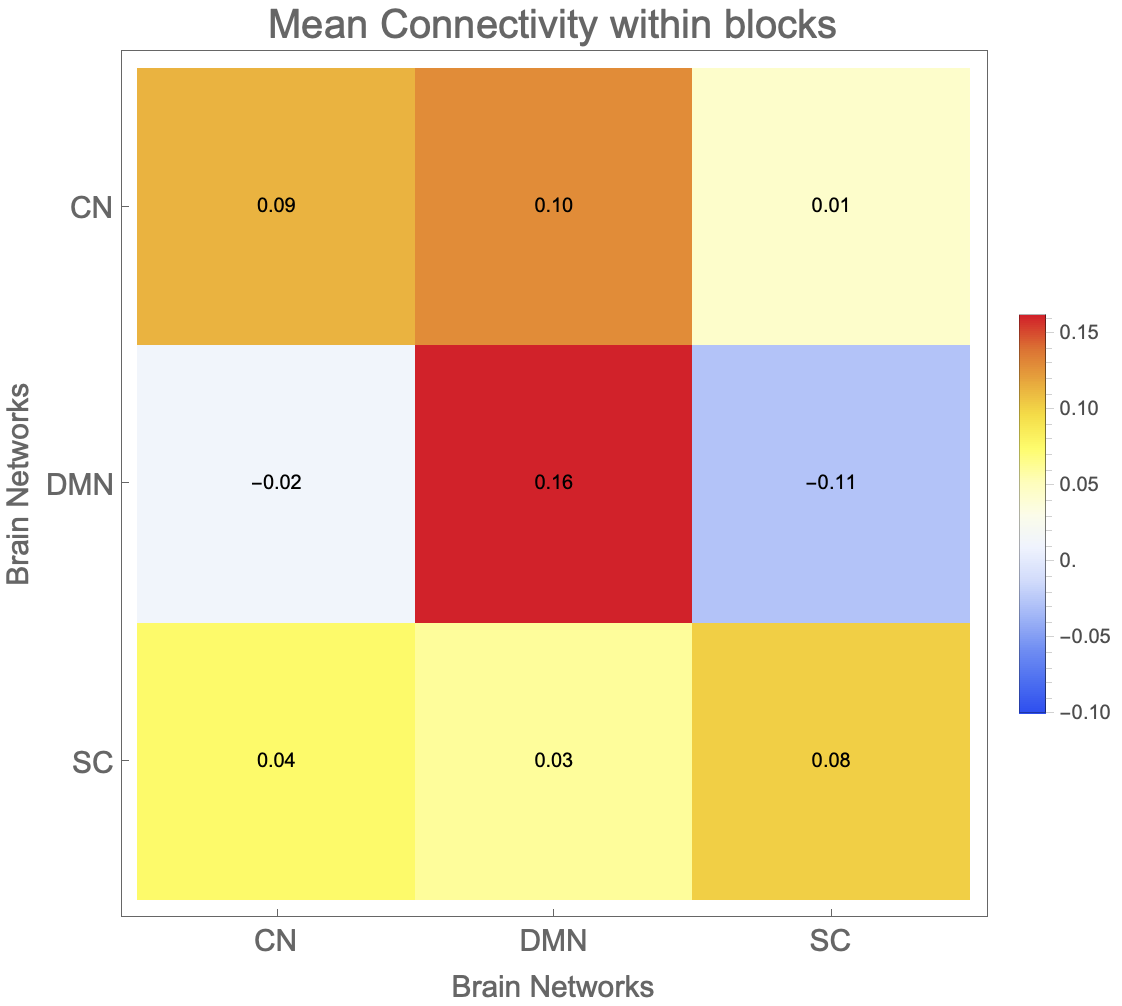}
        \caption{ }
    \end{subfigure}

    \begin{subfigure}[b]{0.36\textwidth}
    \centering
        \includegraphics[width = \textwidth]{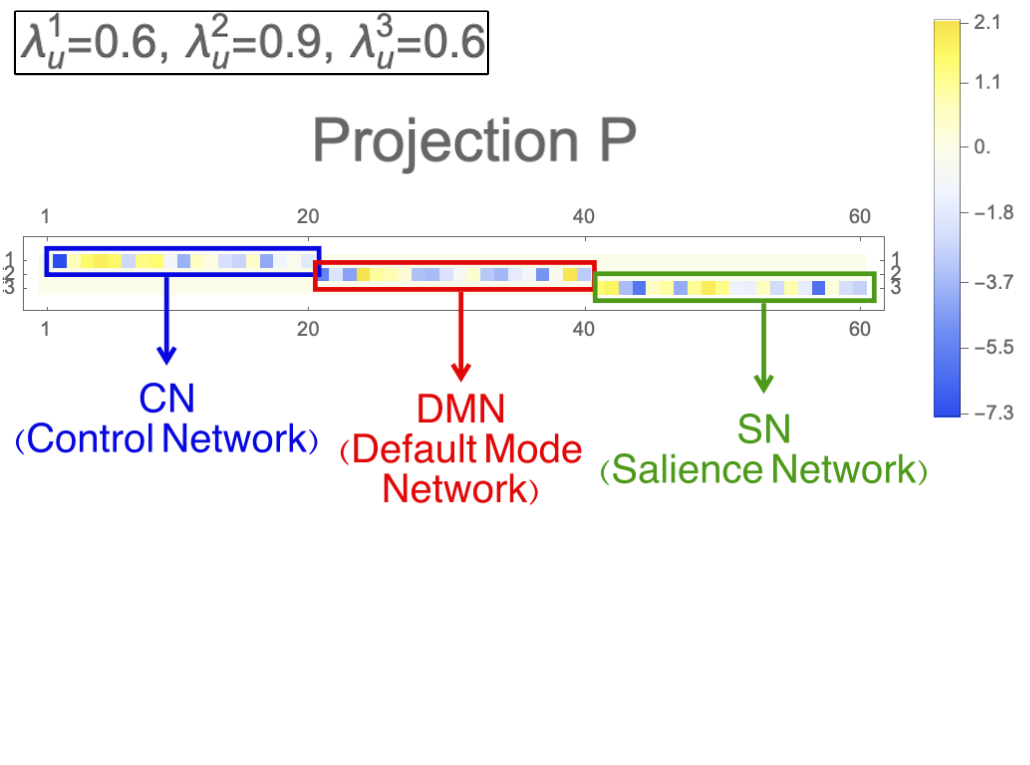}
        \caption{}
    \end{subfigure}
    \begin{subfigure}[b]{0.3\textwidth}
        \centering
        \includegraphics[width = \textwidth]{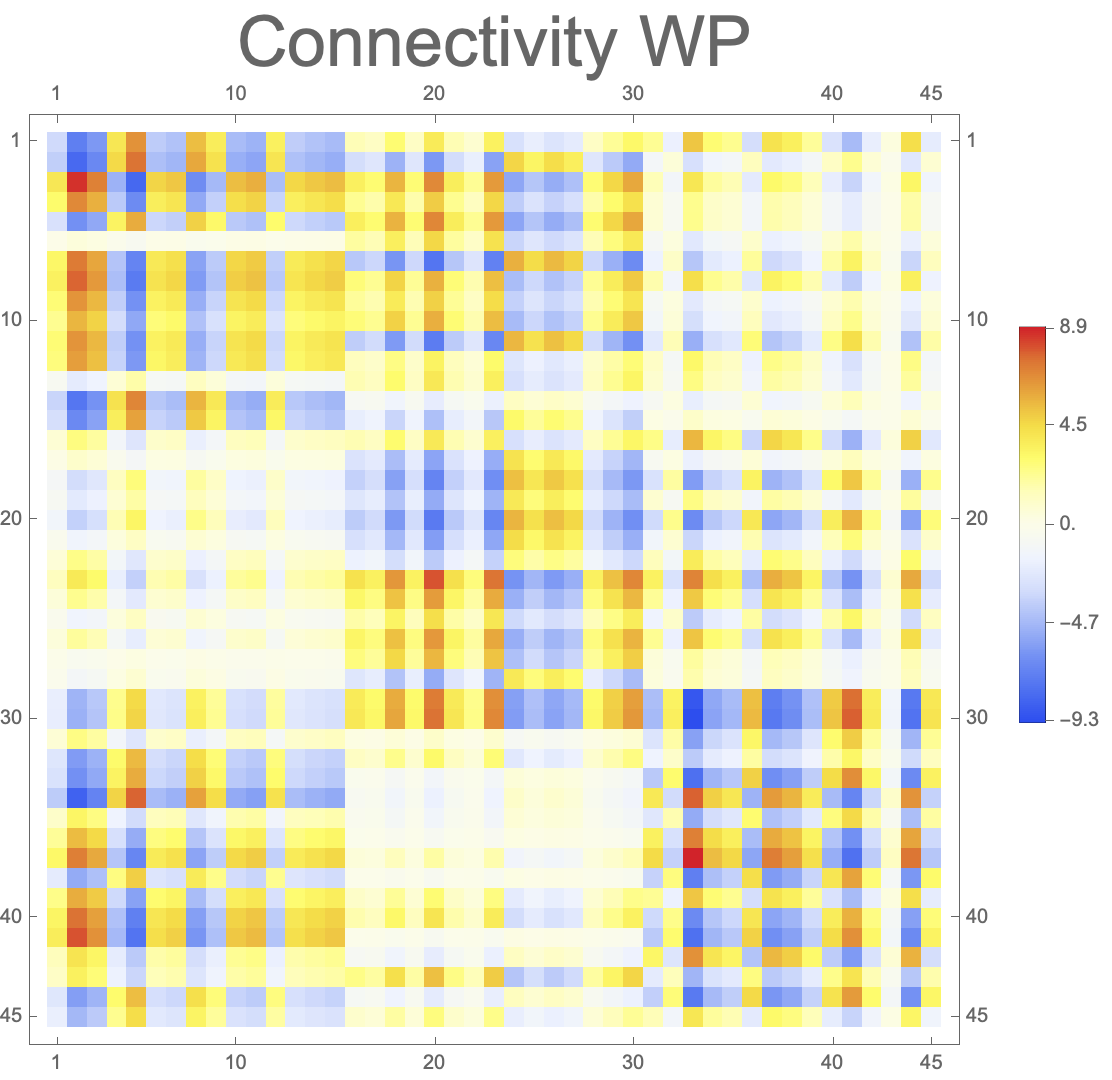}
        \caption{ }
    \end{subfigure}
    \begin{subfigure}[b]{0.3\textwidth}
        \centering
        \includegraphics[width = \textwidth]{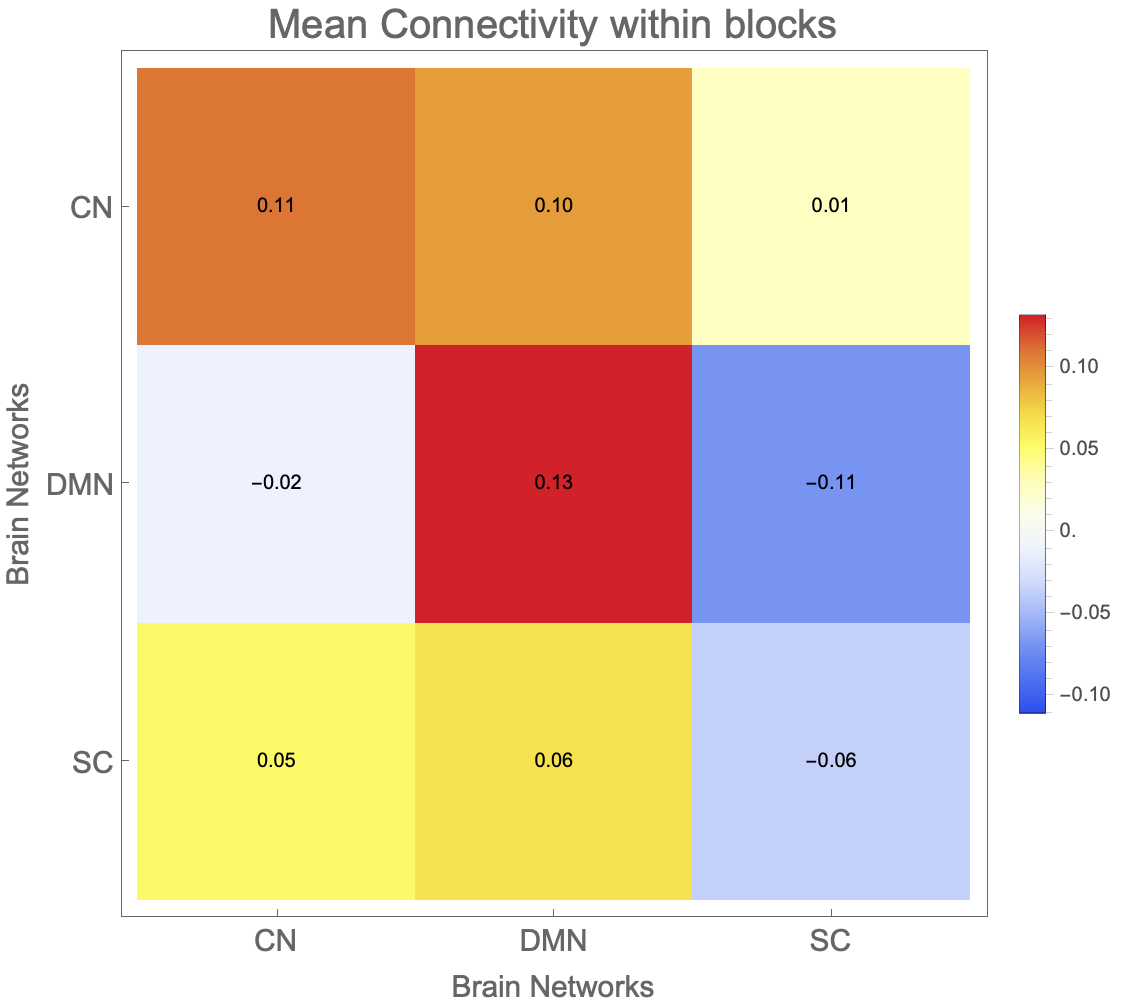}
        \caption{ }
    \end{subfigure}
   \caption{
    Neural interpretation of the learned dynamics.
    Top row: symmetric case $\lambda_u^1=\lambda_u^2=\lambda_u^3=0.6$.
    Bottom row: asymmetric case $\lambda_u^2=0.9$, $\lambda_u^1=\lambda_u^3=0.6$.
    (a,d) Projection matrix $P$, showing the assignment of each population to the CN, DMN, and SN.
    (b,e) Connectivity matrices $WP$.
    (c,f) Block-averaged connectivity, summarizing the mean interaction strength between networks.}
    \label{fig:meanWP}
\end{figure}

Figure~\ref{fig:meanWP} displays the projection $P$ and connectivity matrices $WP$ corresponding to the simulation performed in Section~\ref{sec:simulation}, with $N=45$.  As before, each row  of Figure~\ref{fig:meanWP} corresponds to a different configuration of unstable eigenvalues for the target dynamics $g$: the first row illustrates the symmetric case, where all $\lambda_u^i$ coincides, while the second row shows an asymmetric configuration in which one eigenvalue differs from the others. For each configuration, the figure is organized into three columns. Images~(a,d) show the structure of the projection matrix $P$, illustrating the  partition of $\mathbb{R}^N$ into the three neural networks. Images~(b,e) contain the corresponding connectivity matrices $WP$;  although we perceive the block structure, these matrices are not easy to interpret.  Images~(c,f) therefore report, for each block, the mean connectivity value, providing an intelligible summary of network interactions.

In particular, images (c, f) show that the DMN exhibits the strongest intra-network coupling, in both dynamical regimes. When the symmetry of residence times is broken, here by increasing $\lambda_u^2$ and thereby reducing the activation time of the DMN, the mean connectivity reorganizes: the intra-network coupling of the CN increase in average strength, while the one of the DMN decreases.

The block decomposition of $WP$ thus reveals how each network pair (CN–DMN, DMN–SN, etc.) is influenced by variations in low-dimensional dynamics. It would be interesting to investigate whether specific dynamical properties, such as the length of time the system remains in a given state, are associated with specific connectivity patterns between brain networks. This will be the work of future research.

\section{Summary and conclusions}
In this work, we propose a mathematical model to describe cyclic and sequential neural activity patterns. We combine tools from dynamical systems theory, especially heteroclinic connections, with classical neural activity models such as the spatial-discrete neural-field equations. These two approaches may seem incompatible at first, but we show that they can be integrated using machine learning techniques. In particular, we derive a suitable version of the universal approximation theorem to build an interpretable neural network that can reproduce the desired cyclic dynamics.

The first part of the work is dedicated to showing that, under the assumption of biologically realistic saddle points, equation~\eqref{eq:WC} cannot support heteroclinic cycles. This is the content of Theorem~\ref{thm:main}, established for equilibria on the coordinate axes and extendable to small perturbations. This result provides a complementary mathematical perspective to previous qualitative descriptions in the literature (see, e.g., \cite{rabinovich2008transient}). This analysis establishes a precise theoretical limitation of standard low-dimensional discrete neural-field equations, motivating the development of alternative, biologically grounded modeling frameworks.

So, in the second part of the paper, we relate heteroclinic dynamics to neural models via the Universal Approximation Theorem. We derive appropriate versions of the theorem (Corollaries~\ref{cor:UAT} and~\ref{prop:blockP}) that allow us to interpret the approximation system as a higher-dimensional neural field model, with components representing the activities of neural populations. When the target dynamics exhibit a heteroclinic cycle, the resulting network generates a periodic trajectory that closely follows it (Theorems~\ref{theo:periodic-orbit} and~\ref{theo:periodic-orbit3}).

In the final section, we present a case study on focused attention meditation. We construct a target dynamics and approximate it using a neural network of the form~\eqref{eq:univ-approx}, obtaining simulations that reproduce the cyclic patterns observed experimentally, in agreement with the theory. The high-dimensional neural-field representation further enables a neural interpretation of the dynamics, relating the connectivity structure of the system to the brain networks involved in the process.

Overall, this approach provides a rigorous framework for incorporating sequential dynamics into biologically plausible neural models, with potential applications to the study of different cyclic behaviors in brain circuits.

\appendix

\section{Local spectral analysis}
\label{app:counting-eigenvalues}
We provide here the proof of Theorem~\ref{thm:local}, which is based on a direct spectral analysis. Specifically, we study the spectrum of the Jacobian of the vector field~\eqref{eq:WC} evaluated at equilibria of the form~\eqref{eq:equilibria}. The following lemma establishes the relevant spectral properties of these equilibria.

\begin{lemma}
    \label{lem:eigenvalues}
    Consider the system~\eqref{eq:WC} with $n$ equilibrium points
    $\bar{x}_i$ of the form~\eqref{eq:equilibria}. Assume the activation function $\sigma$ satisfies the hypothesis of Theorem~\ref{thm:local}.
    Then, the Jacobian matrix $Df(\bar{x}_i)$ has real eigenvalues, at least $n-2$ of which are positive and one is negative.
\end{lemma}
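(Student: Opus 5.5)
The plan is to compute the Jacobian $Df(\bar x_i)$ explicitly using the structure \eqref{ref:W-structure} of $W$. That Jacobian will turn out to be a diagonal matrix minus a rank-one matrix with positive factors. After a diagonal similarity it becomes a symmetric rank-one downdate of a diagonal matrix, and Weyl/Cauchy interlacing then counts the signs of the eigenvalues.

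\emph{Step 1 (Jacobian).} With $A=\diag(s_k/a_k)$, $s_k=\sigma^{-1}(a_k)$, and $u=(1/a_1,\dots,1/a_n)^\top$, we have $A\bar x_i=s_ie_i$ and $b u^\top \bar x_i=b$. Hence $W\bar x_i+b=s_ie_i$. Writing $D=\diag(\sigma'(W\bar x_i+b))$, its $i$-th entry is $\sigma'(s_i)$ and all other entries are $\sigma'(0)=1$. Therefore
\[
Df(\bar x_i)=-\idty+DW=\Delta-c\,u^\top,\qquad \Delta:=-\idty+DA,\quad c:=Db .
\]
Both $c$ and $u$ have strictly positive entries, since $b_k>0$, $\sigma'>0$ and $a_k>0$.

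\emph{Step 2 (signs of $\Delta$).} For $k\neq i$ we get $\Delta_{kk}=s_k/a_k-1$. The hypothesis $\sigma(s)/s<\sigma'(0)=1$ applied at $s=s_k$ gives $a_k/s_k<1$. Since $a_k>0$ forces $s_k>0$, this means $\Delta_{kk}>0$. For $k=i$ we get $\Delta_{ii}=\sigma'(s_i)s_i/a_i-1$, and the hypothesis $\sigma'(s_i)<\sigma(s_i)/s_i=a_i/s_i$ gives $\Delta_{ii}<0$. So $\Delta$ has exactly one negative and $n-1$ positive diagonal entries.

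\emph{Step 3 (symmetrization and interlacing).} Set $S=\diag(\sqrt{c_k/u_k})$. Then $S^{-1}(\Delta-cu^\top)S=\Delta-ww^\top$ with $w_k=\sqrt{c_ku_k}$. Thus $Df(\bar x_i)$ is similar to a real symmetric matrix, and all its eigenvalues are real. Order the eigenvalues $\delta_1\le\cdots\le\delta_n$ of $\Delta$, so $\delta_1<0<\delta_2$. Order the eigenvalues $\mu_1\le\dots\le\mu_n$ of $\Delta-ww^\top$. Since $ww^\top$ is positive semidefinite of rank one, the interlacing inequalities give
\[
\mu_1\le\delta_1\le\mu_2\le\delta_2\le\cdots\le\mu_n\le\delta_n .
\]
It follows that $\mu_1\le\delta_1<0$, which gives one negative eigenvalue. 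For $k\ge3$ we have $\mu_k\ge\delta_{k-1}\ge\delta_2>0$, which gives at least $n-2$ positive eigenvalues. This proves the lemma. Theorem~\ref{thm:local} then follows by hyperbolic saddle counting: there are at least $n-2$ unstable directions, and at most one further eigenvalue ($\mu_2$) can be nonpositive, so the stable directions number at most $2$. The statement's bound of $1$ stable direction requires additionally $\mu_2\ge 0$, which I would check separately via the secular equation $1=\sum_k w_k^2/(\delta_k-\mu)$.

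The main obstacle is Step 3. One has to notice that the non-symmetric rank-one perturbation $cu^\top$ with positive factors can be symmetrized, and then apply interlacing in the correct direction, since a negative semidefinite update pushes eigenvalues down. Steps 1 and 2 are direct checks once the evaluation point $W\bar x_i+b=s_ie_i$ is identified.
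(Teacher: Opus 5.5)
Your proof is correct, but it reaches the conclusion by a different route from the paper's.

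\textbf{Shared starting point.} Both arguments split $Df(\bar x_i)$ into a diagonal matrix minus a rank-one matrix. In the paper this is $(-\idty+\Sigma A)-\Sigma b\,u^\top$, whose diagonal part has one negative and $n-1$ positive entries.

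\textbf{The paper's route.} It applies the matrix determinant lemma to write $p(\lambda)=\varphi(\lambda+1)\det(\Sigma A-(\lambda+1)\idty)$. It then counts real zeros of the secular function $\varphi$ between consecutive distinct poles. Reality of the spectrum follows from a degree count, which needs the nondegeneracy condition \eqref{eq:cond-phi}. Repeated diagonal entries have to be tracked separately as leftover eigenvalues $\tilde\lambda_\ell-1$.

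\textbf{Your route.} The diagonal similarity $S=\diag(\sqrt{c_k/u_k})$ is legitimate because $c_k,u_k>0$, and it makes reality immediate. Interlacing for the negative semidefinite rank-one update then gives the sign count. You need no genericity assumption and no multiplicity bookkeeping.

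\textbf{What each buys.} The paper's route gives explicit localization: each non-diagonal eigenvalue is a root of $\varphi$, and the undetermined one has the sign of $\varphi(1)$. Your interlacing also gives $\mu_k\le\delta_k$, so no eigenvalue exceeds the largest diagonal entry of $\Delta$. This corrects the paper's passing remark that the largest eigenvalue exceeds $\tilde\lambda_k-1$.

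\textbf{On your aside about Theorem~\ref{thm:local}.} You are right that the lemma only gives at most two stable directions. However, the extra check $\mu_2\ge 0$ cannot succeed in general. On $(\delta_1,\delta_2)$ the secular function $1-\sum_k w_k^2/(\Delta_{kk}-\mu)$ is strictly decreasing, and $0$ lies in that interval. Hence $\mu_2>0$ if and only if $1-\sum_k w_k^2/\Delta_{kk}>0$, which is exactly the paper's condition $\varphi(1)>0$. For $k\neq i$ one has $w_k^2/\Delta_{kk}=b_k/(s_k-a_k)$, so the condition fails as soon as some such $b_k$ is large enough, and then $\mu_2<0$. Only the bound $\dim\mathcal W^u\ge n-2$ is needed, and used, to rule out codimension-one saddles for $n\ge 4$.
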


\begin{proof}
    The Jacobian of the vector field $f$ of~\ref{eq:WC} evaluated at $\bar{x}_i$ is computed as
    \begin{equation*}
        Df(\bar x_i)
        = -\idty + \Sigma W,
        \qquad
        \Sigma = \diag \left(\sigma'(0), \ldots, \sigma'(\sigma^{-1}(a_i)), \ldots,\sigma'(0)  \right).
    \end{equation*}
    Due to $W = A - b u^\top$ (see Eq~\ref{ref:W-structure}), $Df(\bar x_i)$ is given by the difference of a diagonal matrix $D=-\idty + \Sigma A$ and a rank one matrix $\Sigma bu^\top$. We let $\lambda_j$ be the elements of the matrix $\Sigma A$. In particular, we have
    \begin{equation}
        \label{eq:eigen-sigmaA}
        \lambda_j =
        \frac{\sigma^{-1}(a_j) \sigma'(\sigma^{-1}(a_j)\delta_{ij})}{a_j},
        % \begin{cases}
        % \sigma'(0)\frac{\sigma^{-1}(a_j)}{a_j}, &\qquad \text{if } j\neq i\\
        % \sigma'(\sigma^{-1}(a_i) ) \frac{\sigma^{-1}(a_i)}{a_i}
        % &\qquad \text{otherwise.}
        % \end{cases}
        % %  \sigma'(\sigma^{-1}(a_i) \delta_{ij}) \frac{\sigma^{-1}(a_j)}{a_j} < 1.
    \end{equation}where $\delta_{ij}$ is the Kronecker delta.
    As a consequence, by assumptions on $\sigma$, it holds that $\lambda_i<1$ and $\lambda_j > 1$ for any $j\neq i$.
    This implies that the diagonal matrix $D$ is invertible.

    Due to the special structure of $Df(\bar x_i)$, in order to compute its characteristic polynomial $p(\lambda) = \det (Df(\bar x_i)-\lambda\idty)$ we can exploit the Matrix Determinant Lemma\footnote{Recall that this states that $\det(M+uv^\top)=(1+v^\top M^{-1}u) \det (M)$, whenever $M$ is an invertible square matrix and $u,v$ are vectors.}. We thus obtain
    \begin{equation}
        p(\lambda) = \varphi(\lambda+1)\det \left(\Sigma A -(\lambda+1)\idty \right),
    \end{equation}
    where we let
    \begin{equation}
        \label{eq:phi}
        \varphi(\mu) = 1 + \sum_{j=1}^n \frac{\alpha_j}{\mu - \lambda_j},
        \qquad
        \alpha_j = \frac{b_j \sigma'(\sigma^{-1}(a_j)\delta_{ij})}{a_j}.
    \end{equation}
    Observe that $\alpha_j\ge 0$ since $b_j\ge 0$ and $a_j>0$.

    As a consequence, $\lambda$ is an eigenvalue of the linearized system at $\bar x_i$ if and only if $\lambda+1$ is an eigenvalue of $\Sigma A$ or  $\varphi(\lambda+1)=0$. The former happens exactly if $\lambda +1 = \lambda_j$ for some non-simple eigenvalue $\lambda_j$ of $\Sigma A$. In particular, it must hold $j\neq i$ and thus $\lambda>0$ by \eqref{eq:eigen-sigmaA}.

    Let us denote by $\tilde\lambda_1 < 1 < \tilde\lambda_2<\ldots<\tilde\lambda_k$ the $k$ distinct quantities in $\{\lambda_1,\ldots,\lambda_n\}$.
    We start by assuming that
    \begin{equation}
        \label{eq:cond-phi}
        \sum_{\{j\mid \lambda_j = \tilde\lambda_\ell\}} \alpha_j\neq 0
        \qquad \ell=1,\ldots, k
    \end{equation}
    In this case, expression \eqref{eq:phi} can be recasted as the quotient of two polynomials, where the numerator has degree exactly $k$.
    Hence, the function $\varphi$ has exactly $k$ zeros, possibly complex.
    Since by assumption all coefficients in \eqref{eq:phi} are non-negative, we have
    \begin{equation}
        \lim_{\mu\to \pm\infty} \varphi(\mu) = 1,
        \qquad
        \lim_{\mu\to \lambda_j^-} \varphi(\mu) = -\infty,
        \qquad\text{and}\qquad
        \lim_{\mu\to \lambda_j^+} \varphi(\mu) = +\infty.
    \end{equation}
    Hence, by continuity, $\varphi(\mu)$ has one zero in $(-\infty,\tilde\lambda_1)$ and one in each non-empty interval $(\tilde\lambda_j,\tilde\lambda_{j+1})$, $j=1,\ldots, k$.
    The first part of the statement follows since $\tilde\lambda_1-1<0$ and $\tilde \lambda_j-1>0$ for $j\ge 2$. Observe, in particular, that the largest positive eigenvalue is larger than the largest element $\tilde \lambda_k-1$ of $D$.

    The general statement follows since condition \eqref{eq:cond-phi} is open.
    % IF NECESSARY:
    % Moreover, since
    % \begin{equation*}
    % \varphi'(\mu) = -\sum_{j=1}^n \frac{\alpha_j}{(\mu-\lambda_j)^2} < 0
    % \end{equation*}
    % whenever $\mu \neq \lambda_j$, the function $\varphi$ is strictly decreasing on each interval between consecutive poles. 
    % Therefore, it admits exactly one real zero in each such interval and one in $(-\infty,\tilde\lambda_1)$. 
    % Since the characteristic polynomial has degree $n$, these exhaust all its zeros, which are therefore real.
\end{proof}

\begin{remark}
    Observe that $\varphi(1)>0$ is a sufficient condition to have exactly $n-1$ negative eigenvalues. However, it is not necessary in general.
\end{remark}

The proof of Theorem~\ref{thm:local} is an immediate consequence of the previous lemma. Indeed, for $n>3$, all equilibria have at least two unstable directions, so that no codimension-one saddle equilibria exist. 
Since condition~\eqref{eq:eigenv} for the existence of heteroclinic cycles requires equilibria with exactly one unstable direction, this condition cannot be satisfied. Therefore, heteroclinic cycles are excluded in this case.

\begin{remark}
The spectral structure described above is robust under small perturbations of the vector field of the form~\eqref{eq:WC}, induced by perturbations of the equilibria as in~\eqref{eq:perturbed-equilibria}. In particular, the qualitative distribution of eigenvalues at equilibria remains unchanged. As a consequence, the absence of codimension-one saddle equilibria, and hence of heteroclinic cycles, persists under such perturbations.
\end{remark}

\section{Uniform convergence of vector fields}
\label{app:proof}

In this appendix we establish the $C^1_{\mathrm{loc}}$ convergence of the perturbed vector fields $F_\varepsilon$ to the unperturbed field $F$.
We work in the setting introduced in Section~\ref{sec:absence}, and we refer in particular to the proofs of Theorem~\ref{thm:main} and Corollary~\ref{cor:perturbations}.

We begin by recalling the normalized form of the dynamics~\eqref{eq:WC}, which reads
\begin{equation}
\label{eq:appendix-F}
F(v)
=
-v + \bar X^{-1}
\sigma\!\bigl((\sigma^{-1}(\bar X) - b\,\mathbf{1}^\top)v + b\bigr),
\end{equation}
where $\bar X=\diag(a_1,\dots,a_n)$. 
Given a perturbation $X_\varepsilon=\bar X+E$ with $\|E\|\le \varepsilon$, the associated normalized vector field is
\begin{equation}
\label{eq:appendix-Feps}
F_\varepsilon(v)
=
-v + X_\varepsilon^{-1}
\sigma\!\bigl((\sigma^{-1}(X_\varepsilon) - b\,\mathbf{1}^\top)v + b\bigr).
\end{equation}
Under the standing assumptions on $\sigma$ and the boundedness of the entries
of $X_\varepsilon$, we have the following Lemma, which is an application of the Ascoli-Arzela Theorem.

\begin{lemma}
\label{lem:Feps-C1loc}
There exist $\varepsilon_0 > 0$ such that for every compact set
    $K \subset \mathbb{R}^n$ there exists a constant $C_K > 0$,
    independent of $\varepsilon$, such that for all
    $\varepsilon \in (0,\varepsilon_0)$,
    \[
        \|F_\varepsilon - F\|_{C^1(K)}
        := \sup_{v \in K}
           \Bigl(
             |F_\varepsilon(v) - F(v)|
             + \|DF_\varepsilon(v) - DF(v)\|
           \Bigr)
        \to 0
    \quad \text{as } \varepsilon \to 0.
    \]
    In particular, $F_\varepsilon \to F$ in $C^1_{\mathrm{loc}}(\mathbb{R}^n)$
    as $\varepsilon \to 0$.
\end{lemma}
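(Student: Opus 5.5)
The plan is to prove the $C^1$ convergence directly, by writing $F_\varepsilon - F$ as a sum of differences. Each difference is controlled by continuity of matrix inversion, of $\sigma^{-1}$ and of $\sigma,\sigma'$ on compact sets. Since the dependence on $\varepsilon$ enters only through the matrix $X_\varepsilon$, it is convenient to regard $F_\varepsilon = \mathcal{F}(X_\varepsilon,\cdot)$, where
\[
\mathcal{F}(X,v) := -v + X^{-1}\sigma\bigl((\sigma^{-1}(X)-b\mathbf{1}^\top)v+b\bigr),
\]
defined on $\mathcal{U}\times\mathbb{R}^n$, with $\mathcal{U}$ the set of invertible matrices whose entries lie in the fixed compact interval $J\subset(-1,1)$ assumed in the paper. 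Then $F=\mathcal{F}(\bar X,\cdot)$.

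First I would choose $\varepsilon_0$ so that for $\|E\|<\varepsilon_0$ the matrix $X_\varepsilon$ stays in a compact neighborhood $\mathcal{C}\subset\mathcal{U}$ of $\bar X$. This uses openness of $GL(n,\mathbb{R})$, and the fact that entries remain in $J$ by hypothesis. On $\mathcal{C}$ the map $X\mapsto X^{-1}$ is continuous, hence uniformly continuous and bounded. The map $X\mapsto\sigma^{-1}(X)$ (componentwise) is also continuous: $\sigma$ is a $C^1$ strictly increasing bijection onto its image, so $\sigma^{-1}$ is continuous, and even $C^1$ on $J$ since $\sigma'>0$. Consequently $M(X):=\sigma^{-1}(X)-b\mathbf{1}^\top$ is continuous on $\mathcal{C}$, and $M(X_\varepsilon)\to M(\bar X)$ as $\varepsilon\to0$.

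Next I would fix a compact $K$ and note that the arguments $z(X,v)=M(X)v+b$ range, for $(X,v)\in\mathcal{C}\times K$, over a compact set $Z\subset\mathbb{R}^n$. On $Z$, both $\sigma$ and $\sigma'$ are uniformly continuous and bounded. For the $C^0$ part I would split
\[
F_\varepsilon-F=(X_\varepsilon^{-1}-\bar X^{-1})\sigma(z_\varepsilon)+\bar X^{-1}\bigl(\sigma(z_\varepsilon)-\sigma(z_0)\bigr),
\]
bounding the first term by $\|X_\varepsilon^{-1}-\bar X^{-1}\|$ times $\sup|\sigma|\le\sqrt n$. For the second term I would use that $\sigma$ is $1$-Lipschitz (since $\sigma'\le1$) together with $|z_\varepsilon-z_0|\le\|M(X_\varepsilon)-M(\bar X)\|\sup_K|v|$.

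For the derivative I would use
\[
DF_\varepsilon(v)=-\idty+X_\varepsilon^{-1}\diag(\sigma'(z_\varepsilon))M(X_\varepsilon),
\]
and telescope into three differences, one each for $X^{-1}$, $\diag\sigma'(z)$ and $M$. The middle difference is the only delicate one. Since $\sigma'$ is merely continuous, I would invoke its uniform continuity on $Z$, combined with $\sup_K|z_\varepsilon-z_0|\to0$ from the previous step. This gives $\sup_K\|\diag\sigma'(z_\varepsilon)-\diag\sigma'(z_0)\|\to0$, and all remaining factors are uniformly bounded on $\mathcal{C}\times K$. The constant $C_K$ is then the common bound on these factors.

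The main obstacle is the uniformity of the estimates in $\varepsilon$: the compactness of $Z$ must be independent of $\varepsilon$. This is why $\varepsilon_0$ has to be fixed first, through the compact neighborhood $\mathcal{C}$, before any compact $K$ is chosen. Local convergence in $C^1_{\mathrm{loc}}$ follows since $K$ was arbitrary. Ascoli--Arzel\`a is not really needed beyond this equicontinuity/uniform-continuity argument.
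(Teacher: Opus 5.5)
Your proof is correct, and it differs from the paper's only in how it handles the derivative.

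The $C^0$ part is essentially the paper's: the split is arranged as $X_\varepsilon^{-1}(\sigma(z_\varepsilon)-\sigma(z))+(X_\varepsilon^{-1}-\bar X^{-1})\sigma(z)$, with Neumann-series bounds on $X_\varepsilon^{-1}$ and a Lipschitz bound for $\sigma^{-1}$ on $J$.

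For $DF_\varepsilon$ the paper does not telescope. It shows that the family $DF_\varepsilon$ is uniformly bounded and equicontinuous on $K$, extracts a uniformly convergent subsequence by Ascoli--Arzel\`a, and identifies the limit as $DF$ using the already established uniform convergence $F_\varepsilon\to F$.

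Your telescoping into the three differences (for $X^{-1}$, $\diag\sigma'(z)$ and $M$) reaches the same conclusion more directly:
\begin{itemize}
\item it needs no subsequence extraction;
\item it needs no identification of the limit, which tacitly uses the theorem on uniform limits of derivatives;
\item it needs no uniqueness-of-limit step to pass from subsequences to the whole family $\varepsilon\to0$;
\item it makes explicit the $\varepsilon$-independent compact set $Z$ on which $\sigma'$ is uniformly continuous, which the paper's claim that $\{\sigma'\circ z_\varepsilon\}$ is equicontinuous silently requires.
\end{itemize}

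Your route is also quantitative. It gives $\|F_\varepsilon-F\|_{C^1(K)}\le C_K\bigl(\varepsilon+\omega(C_K\varepsilon)\bigr)$, where $\omega$ is the modulus of continuity of $\sigma'$ on $Z$. This becomes $O(\varepsilon)$ when $\sigma'$ is Lipschitz, which is presumably what the otherwise unused constant $C_K$ in the statement is meant for. Neither argument gives a linear rate under mere $C^1$ regularity of $\sigma$.

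The paper's compactness route is less explicit, but it only needs qualitative equicontinuity and reuses the $C^0$ estimate to pin down the limit of the derivatives.
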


\begin{proof}
Let $K \subset \mathbb{R}^3$ be compact and set $R_K := \sup_{v\in K}|v|$. We write
\begin{equation}
\label{eq:split}
    F_\varepsilon(v) - F(v)
    =
    X_\varepsilon^{-1}\bigl (\sigma(z_\varepsilon(v)) - \sigma(z(v))\bigr )
    +
    (X_\varepsilon^{-1} - \bar X^{-1})\sigma(z(v)),
\end{equation}
where $z_\varepsilon(v) := (\sigma^{-1}(X_\varepsilon) - b\mathbf{1}^\top)v + b$ and $z(v) := (\sigma^{-1}(\bar X) - b\mathbf{1}^\top)v + b$.

Since $\|E\| \le \varepsilon$ and $X_\varepsilon = \bar X(I + \bar X^{-1}E)$,
for $\varepsilon < \frac{1}{2\|\bar X^{-1}\|}$ the Neumann series gives
\begin{equation}
\label{eq:Xinv-bounds}
    \|X_\varepsilon^{-1}\| \le 2\|\bar X^{-1}\|,
    \qquad
    \|X_\varepsilon^{-1} - \bar X^{-1}\|
    = \|X_\varepsilon^{-1}E\bar X^{-1}\|
    \le 2\|\bar X^{-1}\|^2\,\varepsilon.
\end{equation}
Since $\sigma^{-1}$ is Lipschitz on $J$ compact subset of $(-1, 1)$ with constant $L_J := \sup_{y\in J}|(\sigma^{-1})'(y)| < \infty$, it holds 
\begin{equation}
\label{eq:Dz-diff}
    \|Dz_\varepsilon - Dz\|
    = \|\sigma^{-1}(X_\varepsilon) - \sigma^{-1}(\bar X)\|
    \le c\, L_J\,\varepsilon,
\end{equation}
where $c > 0$ depends only on the chosen norm.

Let $L_\sigma := \sup_s |\sigma'(s)| < \infty$. From~\eqref{eq:Dz-diff}, we have $\|z_\varepsilon(v) - z(v)\| \le c L_J\,\varepsilon\,\|v\|$, so the Lipschitz bound on $\sigma$ gives
$\|\sigma(z_\varepsilon(v)) - \sigma(z(v))\| \le L_\sigma c L_J\,\varepsilon\,\|v\|$.
Using~\eqref{eq:Xinv-bounds} and the bound $\|\sigma(z(v))\| \le \sqrt{n}$,
\[
    \|F_\varepsilon(v) - F(v)\|
    \le
    2\|\bar X^{-1}\| L_\sigma c L_J\,R_K\,\varepsilon
    +
    2\|\bar X^{-1}\|^2\, \sqrt{n}\varepsilon
    =: (C_1 R_K + C_2)\varepsilon.
\]
Since $Dz_\varepsilon$ is independent of $v$, differentiating~\eqref{eq:WC-eps} gives
\[
    DF_\varepsilon(v) = -I + X_\varepsilon^{-1}\,\mathrm{diag}(\sigma'(z_\varepsilon(v)))\,Dz_\varepsilon.
\]
Note that $DF_\varepsilon$ is uniformly bounded by~\eqref{eq:Dz-diff} and~\eqref{eq:Xinv-bounds} and since $L_\sigma < \infty$. We now show that $DF_\varepsilon$ is equicontinuous. Fix $v, w \in K$, then we write 
\[
\begin{aligned}
    \|DF_\varepsilon(v)-DF_\varepsilon(w)\| &= \|X_\varepsilon^{-1}\,
        \mathrm{diag}(\sigma'(z_\varepsilon(v))-\sigma'(z_\varepsilon(w)))\,Dz_\varepsilon\|\\
        &<  C \|\mathrm{diag}(\sigma'(z_\varepsilon(v))-\sigma'(z_\varepsilon(w)))\|
\end{aligned}
\]
Now, since $z_\varepsilon \to z$ uniformly on $K$, the sequence $\{z_\varepsilon\}$ is equicontinuous and thus the sequence $\{\sigma'\circ z_\varepsilon\}$ is also equicontinuous. It follows that the sequence $\{DF_\varepsilon\}$ is equicontinuous. By Ascoli-Arzela Theorem $DF_\varepsilon$ converges to a continuous function $G$ up to subsequences. Since we already know that $F_\varepsilon \to F$ uniformly on $K$, then we deduce that $G = DF$ and $DF_\varepsilon \to DF$ uniformly on $K$. This concludes the proof. 
%(Lemma dei 33 Trentini o Lemma Sotto Sotto)
\end{proof}

% This implies that $DF_\varepsilon$ converges to $$
% is a continuous function on $K$

% We split $DF_\varepsilon - DF_0$ into three terms by adding and subtracting
% $X_\varepsilon^{-1}\mathrm{diag}(\sigma'(z_0(v)))Dz_\varepsilon$:
% \begin{align*}
%     \mathrm{(I)}   &:= X_\varepsilon^{-1}\,
%         \mathrm{diag}(\sigma'(z_\varepsilon(v))-\sigma'(z_0(v)))\,Dz_\varepsilon,\\
%     \mathrm{(II)}  &:= X_\varepsilon^{-1}\,
%         \mathrm{diag}(\sigma'(z_0(v)))\,(Dz_\varepsilon - Dz_0),\\
%     \mathrm{(III)} &:= (X_\varepsilon^{-1} - \bar X^{-1})\,
%         \mathrm{diag}(\sigma'(z_0(v)))\,Dz_0.
% \end{align*}
% Using the Lipschitz bound on $\sigma'$, the estimate
% $\|\sigma'(z_\varepsilon(v)) - \sigma'(z_0(v))\| \le L_{\sigma'} c L_J\,\varepsilon\,\|v\|$,
% and the bounds~\eqref{eq:Xinv-bounds}--\eqref{eq:Dz-diff} together with
% $\|\sigma'\|_\infty = L_\sigma$ and the uniform bound on $Dz_\varepsilon$, each
% term is $O(\varepsilon)$ uniformly on $K$:
% \[
%     \|\mathrm{(I)}\| \le C_3 R_K\,\varepsilon,
%     \qquad
%     \|\mathrm{(II)}\| \le C_4\,\varepsilon,
%     \qquad
%     \|\mathrm{(III)}\| \le C_5\,\varepsilon
% \]
% where
% \[ 
% C_3 := 2\|\bar X^{-1}\|\,L_{\sigma'}\,cL_J\,M_J,
%     \qquad
%     C_4 := 2\|\bar X^{-1}\|\,L_\sigma\,cL_J,
%     \qquad
%     C_5 := 2\|\bar X^{-1}\|^2\,L_\sigma\,M_J
%     \]
% with $M_J = \sup_{y\in J} |\sigma^{-1}(y)|$. 
% Combining, $\sup_{v\in K}\|DF_\varepsilon(v) - DF_0(v)\| \le (C_3 R_K + C_4 + C_5)\varepsilon$. From this, the claim follows. 

\bibliographystyle{siamplain}
\bibliography{references}

\end{document}